\setlist[itemize]{label=$\diamond$}
\newcommand{\Sp}{\operatorname{Sp}}
\newcommand{\Un}{\operatorname{U}}
\newcommand{\Ham}{\operatorname{Ham}}
\newcommand{\Sym}{\operatorname{Sym}}
\newcommand{\Fix}{\operatorname{Fix}}
\newcommand{\HF}{\operatorname{HF}}
\newcommand{\HFloc}{\operatorname{HF}^{\mathrm{loc}}}
\newcommand{\CF}{\operatorname{CF}}
\newcommand{\op}{\mathrm{op}}
\newcommand{\delbar}{\overline{\partial}}
\newcommand{\loc}{\mathrm{loc}}
\newcommand{\id}{\textup{id}}
\newcommand{\bid}{\mathds{1}}
\newcommand{\ind}{\operatorname{ind}}
\newcommand{\End}{\operatorname{End}}
\newcommand{\CZ}{\operatorname{CZ}}
\newcommand{\meanCZ}{\overline{\operatorname{CZ}}}
\newcommand{\Mas}{\operatorname{Mas}}
\newcommand{\supp}{\operatorname{supp}}
\newcommand{\floor}[1]{\left\lfloor{#1}\right\rfloor}
\renewcommand{\phi}{\varphi}
\renewcommand{\epsilon}{\varepsilon}
\newcommand{\cA}{\mathcal{A}}
\newcommand{\cC}{\mathcal{C}}
\newcommand{\cG}{\mathcal{G}}
\newcommand{\cH}{\mathcal{H}}
\newcommand{\cI}{\mathcal{I}}
\newcommand{\cJ}{\mathcal{J}}
\newcommand{\cS}{\mathcal{S}}
\newcommand{\cU}{\mathcal{U}}
\newcommand{\bA}{\mathbb{A}}
\newcommand{\bC}{\mathbb{C}}
\newcommand{\bI}{\mathbb{I}}
\newcommand{\bM}{\mathbb{M}}
\newcommand{\bN}{\mathbb{N}}
\newcommand{\bO}{\mathbb{O}}
\newcommand{\bQ}{\mathbb{Q}}
\newcommand{\bR}{\mathbb{R}}
\newcommand{\bS}{\mathbb{S}}
\newcommand{\bZ}{\mathbb{Z}}
\newtheoremstyle{myprop}%
	{1em plus .25em minus .3em}
	{1em plus .25em minus .3em}
	{\itshape}
	{0em}
	{\bfseries}
	{ }
	{5pt plus 1pt minus 1pt}
	{}
\newtheoremstyle{mytheorem}%
	{1em plus .25em minus .3em}
	{1em plus .25em minus .3em}
	{\itshape}
	{0em}
	{\bfseries}
	{ }
	{5pt plus 1pt minus 1pt}
	{}
\newtheoremstyle{mydefinition}%
	{1em plus .25em minus .3em}
	{1em plus .25em minus .3em}
	{\normalfont}
	{0em}
	{\bfseries}
	{ }
	{5pt plus 1pt minus 1pt}
	{}
\newtheoremstyle{myremark}%
	{1em plus .25em minus .3em}
	{1em plus .25em minus .3em}
	{\normalfont}
	{0em}
	{\bfseries\itshape}
	{ }
	{5pt plus 1pt minus 1pt}
	{}
\theoremstyle{mydefinition}
\newtheorem{defn}{Definition}
\newtheorem*{defn*}{Definition}
\theoremstyle{myremark}
\newtheorem*{rmk}{Remark}
\theoremstyle{myprop}
\newtheorem{prop}{Proposition}[section]
\newtheorem{lemma}{Lemma}[section]
\newtheorem*{lemma*}{Lemma}
\theoremstyle{mytheorem}
\newtheorem{thm}{Theorem}
\newtheorem*{thm*}{Theorem}
\begin{document}

\title[A Poincaré-Birkhoff theorem for AUHDs]{A Poincaré-Birkhoff Theorem for Asymptotically Unitary Hamiltonian Diffeomorphisms}
\author{Leonardo Masci}
\address{International Center for Mathematics, Southern University of Science and Technology}
\curraddr{Shenzhen, Guangdong, China}
\email{\href{mailto:masci@sustech.edu.cn}{masci@sustech.edu.cn}}
\date{February 2024}
\subjclass[2020]{37J12, 37J39, 37J46, 53D40}

\begin{abstract}
	\noindent We prove that an asymptotically linear Hamiltonian diffeomorphism of the standard symplectic vector space, which is non-degenerate and unitary at infinity and approaches its linear map at infinity quickly enough, has infinitely many periodic points, provided that it satisfies a natural twist condition inspired by the classical Poincaré-Birkhoff theorem.
\end{abstract}

\maketitle

\section*{Introduction}
\label{sec:introduction}

\renewcommand{\theequation}{\alph{equation}}

The Poincaré-Birkhoff theorem is a fundamental result in the field of dynamical systems, concerning the fixed points of area-preserving isotopies of the annulus. The theorem states that an area-preserving isotopy of an annulus which twists the boundaries in opposite directions must admit at least two fixed points. Its origins lie in H. Poincaré's work on celestial mechanics, appearing as a conjecture in Poincaré's last paper \cite{Poincare1912_Geometric}. Its proof was first given by G. D. Birkhoff \cite{Birkhoff1913_Proof} in 1913. This theorem has had a profound impact on the field of dynamical systems. Placed in its natural symplectic geometrical environment, it brings to light the occurrence of forced oscillations in Hamiltonian systems. This phenomenon was an important ingredient in the discovery of the field of symplectic topology, and is still at the center of many contemporary questions in Hamiltonian dynamics.

In the late sixties, V. I. Arnol'd interpreted the Poincaré-Birkhoff theorem in terms of Hamiltonian diffeomorphisms of the $2$-torus, and was led to his famous conjectures concerning fixed points of Hamiltonian diffeomorphisms on compact symplectic manifolds \cite[Appendix 9]{Arnold1989MathMethods}. About a decade later, P. Rabinowitz was the first to show the existence of periodic solutions in a certain class of Hamiltonian systems using variational calculus \cite{Rabinowitz1978_PerOrbHamSys}, a feat which at the time was not considered possible. Following a suggestion of J. Moser, in this work Rabinowitz gives recovers the Poincaré-Birkhoff theorem as a theorem on Hamiltonian systems with one degree of freedom. In this sense, Rabinowitz's results ground the Poincaré-Birkhoff theorem on the variational structure underlying Hamiltonian dynamics.

Around the same time, H. Amann and E. Zehnder were also searching for forced oscillations in Hamiltonian systems. They focused on the class of asymptotically linear Hamiltonian systems, as it was not covered by the work of Rabinowitz. They combined Amann's variational techniques with C. Conley's index theory to obtain existence and multiplicity of periodic orbits in asymptotically linear Hamiltonian systems \cite{AmannZehnder1980-FinDimRed,AmannZehnder1980-PeriodicSolutionsAsyLin}. These results were also interpreted as generalisations of the Poincaré-Birkhoff theorem by their authors. Together with Conley, it was later understood that these multiplicity statements arose from Morse-type relations between periodic orbits \cite{ConleyZehnder1984_CZ}. This led Conley and Zehnder to use a similar strategy to successfully solve the Arnol'd conjecture on symplectic tori \cite{ConleyZehnder1983_BirkhoffLewis}, and foreshadowed the existence of a kind of Morse theory for periodic orbits in Hamiltonian systems. This Morse-like theory was ultimately realized by A. Floer, and it is now known as Floer homology. See \cite{Zehnder2019-BeginSympTop} for an account of this story by Zehnder. From the late eighties to the early two-thousands, many results concerning the existence of periodic orbits in asymptotically linear Hamiltonian systems were obtained using classical variational techniques. We point to \cite[Chapter 5]{Abbondandolo2001_MorseHamilt} and the references therein for a detailed discussion.

In this paper we wish to reconnect the question of existence of periodic orbits in asymptotically linear Hamiltonian systems with a twist condition, in the spirit of the original Poincaré-Birkhoff theorem. The goal is to formulate a suitable twist condition which implies the existence of periodic orbits of arbitrarily high period.

\subsection*{Poincaré-Birkhoff in the plane}
We first consider a toy model, concerning the periodic points of certain area-preserving maps of the plane. Let $\upphi$ be a Hamiltonian diffeomorphism of $\bR^2$ with its standard symplectic form $\omega_0$ (see section \ref{sec:preliminaries}). Assume that there exists a compact set $K\subset\bR^2$ such that $\upphi|_{\bR^2\setminus K}$ is a rotation of signed angle $\theta_\infty\in\bR\setminus 2\pi\bZ$. We exclude the trivial case of $\theta_\infty\in2\pi\bZ$ since for this choice, the diffeomorphism clearly has infinitely many fixed points. We can state a kind of Poincaré-Birkhoff theorem for such a map as follows: if $\upphi$ admits a fixed point $z_0\in\Fix\upphi$ with rotation number $\theta_0\neq\theta_\infty$, then $\upphi$ admits infinitely many periodic points. To see this, fix a large invariant circle which spans a disc $D$ containing $K$, and blow up the fixed point to an invariant circle by introducing local polar coordinates centered at $z_0$. The map $\upphi|_D$ extends to an area and orientation preserving homeomorphism $f$ of the resulting annulus, which has different rotation numbers at the two boundary components. One may now apply the generalized version of the Poincaré-Birkhoff theorem by Franks \cite[Corollary 2.4]{Franks1988_RecurrFixPts} to obtain that $f$, and as a consequence $\upphi$, has infinitely many periodic points.

\subsection*{Asymptotically linear Hamiltonian diffeomorphisms}
The theorem proven in this paper is a generalization of this two-dimensional phenomenon to higher dimensions and less restrictive behaviour at infinity. Let us briefly present the class of Hamiltonian systems we intend to study.

We start by considering Hamiltonian diffeomorphisms $\upphi=\phi^1_H$ on the standard linear symplectic space $\left(\bR^{2n},\omega_0\right)$ which are generated by smooth Hamiltonians $H\in C^{\infty}(S^1\times\bR^{2n})$ that are asymptotically quadratic (definition \ref{def:asymptotically_quadratic_hamiltonians}). This means that we can express
\begin{equation}
	H(t,z)=\frac12\left\langle A(t)z,z\right\rangle+h(t,z)
\end{equation}
where $h\in C^\infty(S^1\times\bR^{2n})$ is a smooth function with sub-linear gradient, i.e. $\nabla h(t,z)=o(|z|)$ as $|z|\to\infty$, and $A\colon S^1\to\Sym(2n)$ is a 1-periodic path of symmetric matrices. The path $A$ defines a time-dependent quadratic form $Q(t,z)=\frac12\left\langle A(t)z,z\right\rangle$ which we call the \emph{quadratic Hamiltonian at infinity}. We refer to remainder $h=H-Q$ as the \emph{sub-quadratic part} of the Hamiltonian $H$.

The time-1 map $\upphi=\phi^1_H$ of the flow of an asymptotically quadratic Hamiltonian $H=Q+h$ will be called an \emph{asymptotically linear Hamiltonian diffeomorphism (ALHD)}. For example, if $\upphi$ is the map of the plane described in the previous section, then we can take $H=\frac{\theta_\infty}{2}|z|^2+h$ with $h$ compactly supported on $K$.

The time-1 map $\upphi_\infty=\phi^1_Q$ of the flow of the quadratic Hamiltonian at infinity $Q$ will be called the \emph{linear map at infinity}. It is easy to see that $\upphi_\infty$ is well defined and independent of the chosen generating Hamiltonian $H$ (see proposition \ref{prop:lin_map_infty_welldef}).

We say that an ALHD $\upphi$ is \emph{non-degenerate at infinity} if the corresponding linear map at infinity $\upphi_\infty$ does not have $1\in\bC$ as an eigenvalue. In the two-dimensional toy model, this is equivalent to assuming that the rotation angle $\theta_\infty$ is not an integer multiple of $2\pi$.

With the standing hypotheses on our set of generating Hamiltonians, ALHDs can be quite intractable, especially if we are interested in periodic points of arbitrarily high period. In order to gain control on the flow for adequately long times, we introduce \emph{rapidly asymptotically quadratic Hamiltonians}, whose sub-quadratic part decays sufficiently quickly at infinity (see definition \ref{def:rapidly_asy_quad} for the precise condition needed). If an ALHD can be generated by a rapidly asymptotically quadratic Hamiltonian, we will call it a \emph{rapidly asymptotically linear Hamiltonian diffeomorphism}. The rapidity condition implies that the sub-quadratic part is a bounded function, but not necessarily of compact support. In particular, we can include a sizable class of analytic Hamiltonians in our treatment.

Finally, we introduce a property generalizing the condition of being a rotation outside a compact set, which we imposed in the toy model. An asymptotically linear Hamiltonian diffeomorphism is said to be \emph{unitary at infinity} if its linear map at infinity is represented by a unitary matrix. For example the time-1 flow of an asymptotically quadratic Hamiltonian $H=Q+h$ with $Q$ time-independent and positive- or negative-definite is unitary at infinity.

\subsection*{A twist condition}
Given an ALHD $\upphi$, with a fixed generating Hamiltonian $H=Q+h$, the flow of the quadratic Hamiltonian at infinity $Q$ defines a path of symplectic matrices. We define the \emph{mean index at infinity} of $H$ to be the mean Conley-Zehnder index of this path (see section \ref{sub:the_conley_zehnder_index}). The mean Conley-Zehnder index is a generalization of the two-dimensional rotation number to paths of symplectic matrices in arbitrary dimension. In the two-dimensional toy model, if we use a generating Hamiltonian of the form $H=\frac{\theta_\infty}{2}|z|^2+h$ as in the previous example, then its mean index at infinity is $2\theta_\infty$. With the twist condition in the Poincaré-Birkhoff theorem in mind, we say that a fixed point of $\upphi$ is a \emph{twist fixed point} if its mean Conley-Zehnder index calculated with respect to $H$ is \emph{different} than the mean index at infinity of $H$. Even though the twist condition is formulated in terms of a generating Hamiltonian, it does not depend on the specific generating Hamiltonian chosen: it is a property of the fixed point of the ALHD (see lemma \ref{lem:twisted_fixed_pt_well_def}).

\begin{thm}\label{thm:main_intro}
	Let $\upphi$ be a rapidly asymptotically linear Hamiltonian diffeomorphism, unitary and non-degenerate at infinity. If $\phi$ has an isolated twist fixed point which is homologically visible, then $\upphi$ has infinitely many periodic points.
\end{thm}
\begin{rmk}
	More can be said on the period of the periodic points obtained in the theorem. Let $\upphi$ be a diffeomorphism and $z$ a periodic point of $\upphi$. The \emph{primitive period} of $z$ is the smallest integer $p>0$ such that $\upphi^p(z)=z$. In the proof, it is shown that an ALHD as in Theorem \ref{thm:main_intro} has infinitely many fixed points or infinitely many periodic points with increasing primitive period.
\end{rmk}
Homological visibility is a technical condition, which can be considered the natural Floer-theoretical generalization of non-degeneracy.
It is formulated in terms of the \emph{local Floer homology} groups of the fixed point (see section \ref{sub:local_floer_homology}). These are interesting invariants, which measure the ``homological weight'' of the fixed point by packaging its non-degenerate bifurcations into a sequence of Floer-type homology groups. Homological visibility can be restated equivalently using generating functions \cite[Chapter 9,\S 48]{Arnold1989MathMethods}: given a generating function for the diffeomorphism in a neighborhood of the isolated fixed point, we assume that the local Morse homology \cite{HeinHryniewiczMacarini2019_HMloc} of its critical point corresponding to the fixed point is non-trivial. For example, if the Lefschetz index of the isolated fixed point is non-zero, then it is homologically visible (but not vice versa).

\subsection*{On a conjecture by Abbondandolo} Asymptotically linear Hamiltonian diffeomorphisms which are non-degenerate at infinity always have at least one fixed point, as was shown by Amann, Conley and Zehnder \cite{AmannZehnder1980-FinDimRed,AmannZehnder1980-PeriodicSolutionsAsyLin,ConleyZehnder1984_CZ}. In \cite[pg. 130]{Abbondandolo2001_MorseHamilt} A. Abbondandolo describes a richer picture of the dynamics of asymptotically linear Hamiltonian systems: he conjectures that an asymptotically linear Hamiltonian system should have one or infinitely many periodic orbits. In other words, if the Hamiltonian system has an additional, ``unnecessary'' periodic orbit other than the one found by Amann, Conley and Zehnder, then it should have infinitely many periodic orbits. This conjecture is the analogue of the Hofer-Zehnder conjecture on Hamiltonian diffeomorphisms of compact symplectic manifolds \cite[p. 263]{HoferZehnder2011_SymplecticInvariants} in the context of asymptotically linear Hamiltonian systems.

Abbondandolo's conjecture in this generality is wide open and, to the best of the author's knowledge, beyond the reach of the current mathematical technology. We consider instead the following weaker, \emph{homological} version, following E. Shelukhin's interpretation of the Hofer-Zehnder conjecture (see \cite{Shelukhin2022_OnHZ} and the references therein).

As mentioned previously, an isolated fixed point $z_0$ of a Hamiltonian diffeomorphism $\upphi=\phi^1_H$ carries a local Floer homology, denoted here by $\HFloc_*(H,z_0)$. For an ALHD $\upphi$ define
\begin{equation}
	\operatorname{N}\left(\upphi\right)=\sum_{z\in\Fix\upphi}\dim\HFloc\left(H,z\right)\in\bN\cup\{\infty\}
\end{equation}
where $H$ is some generating Hamiltonian and the local Floer homology is seen as an ungraded $\bZ/2$-vector space. It is easy to see that the number $\operatorname N(\upphi)$ does not depend on the generating Hamiltonian chosen. For example, if the fixed points of $\upphi$ are all non-degenerate, $\operatorname N(\upphi)$ is just the number of fixed points. In particular, for a non-degenerate linear symplectic map one has $\operatorname N(\upphi)=1$. The homological version of Abbondandolo's conjecture is thus that if $\upphi$ is an ALHD with $\operatorname N(\upphi)>1$ then $\upphi$ has infinitely many periodic points.

The result in this paper can be interpreted as a first partial step towards a solution of the homological version of this conjecture, under strong restrictions on the type of system at infinity and additional hypotheses on the index of the unnecessary orbit. In order to attack the full homological version of the conjecture, perhaps techniques akin to the ones of Shelukhin \cite{Shelukhin2022_OnHZ} must be adapted to asymptotically linear Hamiltonian systems.

\subsection*{Comparisons and contrasts} Other kinds of Poincaré-Birkhoff type theorems have appeared recently in the literature. We would like to compare and contrast the present work with some of these results. The author does not claim any bibliographical completeness, and apologizes for any omissions.

In the paper by B. Gürel \cite{Gurel_PJM2014}, Hamiltonian diffeomorphism which are equal to an autonomous hyperbolic linear symplectic diffeomorphism outside a compact set are studied. There it is proven that if the Hamiltonian diffeomorphism admits an isolated, homologically visible fixed point whose mean Conley-Zehnder index is not zero, then there are infinitely many periodic points. Since an asymptotically hyperbolic and autonomous Hamiltonian system always has zero mean index at infinity, assuming the existence of a fixed point with non-vanishing mean Conley-Zehnder index is a twist condition in the sense we introduced in this paper. The proof schema of the main theorem of this paper builds upon the proof found in Gürel's paper. The difference is that we admit a larger class of Hamiltonians, and that that asymptotically unitary Hamiltonian diffeomorphisms behave in a very different manner than asymptotically hyperbolic ones. This leads to complications involving the existence and asymptotic behaviour of continuation morphisms on Floer homology, which required original ideas to be overcome.

Another interesting development is found in A. Moreno and O. Van Koert's paper \cite{MorenoVanKoert2022_PoincBirk}, where a kind of Poincaré-Birkhoff theorem is proven for certain ``twist'' Hamiltonian diffeomorphisms in the completion of Liouville domains which have infinite dimensional symplectic homology. Recently, A. Limoge and Moreno \cite{limogemoreno2025poincarebirkhofftheoremc0hamiltonianmaps} extended the results in the aforementioned paper to a more general notion of twist Hamiltonians, and also provide a relative version for Lagrangian chords. The present work compares to these sorts of Poincaré-Birkhoff-like theorems, because one could interpret our class of Hamiltonian systems as such kind of ``twist'' Hamiltonian diffeomorphims in the completion of an ellipsoid in $\bR^{2n}$. The difference is that the symplectic homology of the ball vanishes, so our techniques cover an orthogonal case.

\subsection*{Structure of the paper} In section \ref{sec:preliminaries} we introduce ALHDs and their elementary properties, and discuss briefly the Conley-Zehnder index. In section \ref{sec:modifying_linear_and_asymptotically_linear_hamiltonian_systems} we present some techniques to modify linear and asymptotically linear Hamiltonian systems which are fundamental for the technical side of the proof of the main theorem. In section \ref{sec:the_poincare_birkhoff_theorem_for_rapidly_asymptotically_unitary_hamiltonian_systems} we prove the Poincaré-Birkhoff theorem for rapidly asymptotically unitary Hamiltonian diffeomorphisms, conditional to an auxiliary proposition which summarizes the technical constructions of the paper. Section \ref{sec:proof_of_the_auxiliary_proposition} contains the proof the auxiliary proposition. In section \ref{sec:floer_homology_of_asymptotically_linear_hamiltonian_systems} we briefly discuss Floer homology for asymptotically linear Hamiltonian systems, also in its filtered and local versions.

\renewcommand{\theequation}{\arabic{equation}}
\numberwithin{equation}{section}

\section{Preliminaries}
\label{sec:preliminaries}

\subsection{Generalities on Hamiltonian diffeomorphisms}
\label{sub:generalities_on_hamiltonian_diffeomorphisms}

Denote by $S^1=\bR/\bZ$ and by $\langle\cdot,\cdot\rangle$ the standard Euclidean inner product on $\bR^m$. Consider $\bR^{2n}$ with its standard symplectic structure, which is given by
\begin{equation}\label{eq:omega0_eucl_inner_prod}
	\omega_0(v,w)=\left\langle J_0v,w\right\rangle,\quad J_0=\begin{pmatrix}
		\bO_n & -\bI_n\\
		\bI_n & \bO_n
	\end{pmatrix}
\end{equation}
where the block matrix form arises from splitting $\bR^{2n}=\bR^n\oplus\bR^n$.
We fix the following primitive of $\omega_0$, which is sometimes called the \emph{radial primitive}:
\begin{equation}\label{eq:radial_primitive_of_omega_0}
	\lambda_0(z)v=\frac12\left\langle z,J_0v\right\rangle.
\end{equation}

For a Hamiltonian function $H\colon S^1\times\bR^{2n}\to\bR$ we usually denote $H(t,z)=H_t(z)$. Define the Hamiltonian vector field $X_H$ of the Hamiltonian $H$ by the identity
\begin{equation}
	i_{X_H}\omega_0=dH
\end{equation}
or, in coordinates, $X_H(t,z)=-J_0\nabla H_t(z)$, where the $\nabla$ denotes the gradient in the $z$-coordinates only.
\begin{rmk}
	Since the Hamiltonian vector field of a time-dependent Hamiltonian $H$ is non-autonomous, the flow it generates is a non-autonomous flow $(t_0,t)\mapsto\phi^{t_0,t}_H$ \cite[Definition 2.2.23]{AbrahamMarsden1978_Foundations}. We denote $\phi^{0,t}_H=\phi^t_H$ throughout the rest of the paper.
\end{rmk}

A diffeomorphism $\upphi\colon\bR^{2n}\to\bR^{2n}$ is said to be Hamiltonian when $\upphi=\phi^1_H$ for some $H\in C^\infty\left(S^1\times\bR^{2n}\right)$. We denote the space of Hamiltonian diffeomorphisms by $\Ham$.

\subsubsection{Fixed points and action functional}
\label{ssub:fixed_points_and_action_functional}

Denote by
\begin{equation}\label{eq:fixed_pts}
	\Fix\upphi=\left\{z\in\bR^{2n}:\upphi(z)=z\right\}
\end{equation}
the set of fixed points of a diffeomorphism $\upphi$. Fixed points of Hamiltonian diffeomorphisms have an \emph{action}, which is determined by a choice of generating Hamiltonian. Let $\upphi\in\Ham$, $H$ a generating Hamiltonian, and $z_0\in\Fix\upphi$. Define $\gamma\colon\bR/\bZ\to\bR^{2n}$ by $\gamma(t)=\phi^t_H(z_0)$. The \emph{action} of $z_0$ is defined to be
\begin{equation}\label{eq:action_of_fixed_pt}
	\cA_H\left(z_0\right)=\int_0^1\frac12\left\langle\gamma(t),J_0\dot\gamma(t)\right\rangle-H_t\left(\gamma(t)\right)dt=\int_{S^1}\gamma^*\lambda_0-H_t\circ\gamma dt.
\end{equation}
We can view $\cA_H$ as a functional on the space of loops of $\bR^{2n}$. The critical points of this functional are exactly the 1-periodic orbits of $X_H$.

\subsubsection{Combining and iterating Hamiltonians}
\label{ssub:combining_and_iterating_hamiltonians}

Choose a $\tau\in C^\infty([0,1])$ such that $\tau(t)=0$ near $0$, $\tau(t)=1$ near 1 and $0\leq \tau'(t)<2$. If $F,G$ are two Hamiltonians, define
\begin{equation}\label{eq:composition_of_hamilts}
	\begin{split}
		\left(F\# G\right)_t(z)&=F_t(z)+G_t\left(\left(\phi^t_F\right)^{-1}(z)\right),\\
		\left(F\wedge G\right)_t(z)&=
		\begin{cases}
			2\tau'(2t)G_{\tau(2t)}(z), &t\in\left[0,\frac12\right]\\
			2\tau'(2t-1)F_{\tau(2t-1)}(z), &t\in\left[\frac12,1\right].
		\end{cases}
	\end{split}
\end{equation}
We call $F\#G$ the \emph{composition} of $F$ after $G$ and $F\wedge G$ the \emph{conactenation} of $F$ after $G$.
Denote by $\overline{F}_t(z)=-F_t\left(\phi^t_F(z)\right)$. These Hamiltonians generate the following flows:
\begin{equation}\label{eq:comp_cat_inv_hams_flows}
	\phi^t_{\overline{F}}=\left(\phi^t_F\right)^{-1},\quad\phi^t_{F\#G}=\phi^t_F\circ\phi^t_G,\quad\phi^t_{F\wedge G}=
	\begin{cases}
		\phi^{\tau(2t)}_G, & t\in\left[0,\frac12\right]\\
		\phi^{\tau(2t-1)}_F\circ\phi^1_G,&t\in\left[\frac12,1\right].
	\end{cases}
\end{equation}
In particular $\phi^1_{F\#G}=\phi^1_F\circ\phi^1_G=\phi^1_{F\wedge G}$ and $\phi^1_{\overline F}=\left(\phi^1_F\right)^{-1}$. The flows $\phi^\cdot_{F\wedge G}$ and $\phi^\cdot_{F\#G}$ seen as paths in $\Ham$ are homotopic with fixed end-points.

These calculations show that if $\upphi\in\Ham$ then $\upphi^{-1}\in\Ham$ and if further $\uppsi\in\Ham$ then $\upphi\circ\uppsi\in\Ham$. The identity $\id_{\bR^{2n}}$ is generated by the zero Hamiltonian. Therefore $\Ham$ is a group under composition. In particular, if $\upphi\in\Ham$, then $\upphi^k\in\Ham$ for all $k\in\bZ$. In this case though, given a generating Hamiltonian $H$ for $\upphi$, there is a choice of Hamiltonian generating $\upphi^k$ which is simpler than the $k$-fold composition or $k$-fold concatenation of $H$. For $k\in\bZ$, we define
\begin{equation}\label{eq:def_of_iterated_Hamiltonian}
	H^{\times k}(t,z)=kH(kt,z).
\end{equation}
It is easy to see that $\phi^1_{H^{\times k}}=\phi^k_H=\upphi^k$ using the 1-periodicity of the coefficients of the Hamiltonian. We will call $H^{\times k}$ the $k$-\emph{fold iteration} of the Hamiltonian $H$.

\subsection{Asymptotically Linear Hamiltonian Systems}
\label{sub:asymptotically_linear_hamiltonian_systems}

Let $f$ be a $C^1$ function from $[0,1]\times\bR^{2n}$ to $\bR$. Define the ``tail functions'' $\sigma^f_0,\sigma^f_1\colon[0,\infty)\to[0,\infty]$ of $f$ by
\begin{align}\label{eq:def_of_decay_moduli}
	\sigma^f_1(R)=\sup_{t\in[0,1],\ |z|\geq R}\frac{\left|\nabla f_t(z)\right|}{|z|},\\
	\sigma^f_0(R)=\sup_{t\in[0,1],\ |z|\geq R}\frac{\left| f_t(z)\right|}{|z|^2}.
\end{align}
The function $\sigma^f_1$ is a measure of the ``linear growth'' of the gradient, while $\sigma^f_0$ a measure of the ``quadratic growth'' of the function:
\begin{equation}\label{eq:decay_moduli_growth_moduli}
	\left|\nabla f_t(z)\right|\leq\sigma^f_1\left(|z|\right)|z|,\quad \left|f_t(z)\right|\leq\sigma^f_0\left(|z|\right)|z|^2.
\end{equation}

\begin{defn}\label{def:asymptotically_quadratic_hamiltonians}
	Consider a smooth Hamiltonian function $H\colon S^1\times\bR^{2n}\to\bR$. We say that $H$ is \emph{asymptotically quadratic} if there exists a smooth path $A\colon S^1\to\Sym(2n)$ of symmetric matrices such that setting $Q_t(z)=\frac12\langle A_tz,z\rangle$ and
	\begin{equation}\label{eq:def_of_perturb}
		h_t(z)=H_t(z)-Q_t(z)
	\end{equation}
	we have that
	\begin{enumerate}
		\item\label{item:asyquad_def_asspts_hessian_bded} The Hessian of $h$ is bounded:
		\begin{equation}\label{eq:hessian_bded}
			c_2=\left\|\nabla^2h\right\|_{L^\infty\left([0,1]\times\bR^{2n}\right)}<\infty.
		\end{equation}
		\item\label{item:asyquad_def_asspts_sublin_grad} The gradient of $h$ has sub-linear growth:
			\begin{equation}\label{eq:asyquad_def_asspt_sublin_grad}
				\sigma^h_1(R)\to 0\quad\text{as}\quad R\to\infty.
			\end{equation}
	\end{enumerate}
	We refer to the time-dependent quadratic form $Q_t(z)=\frac12\langle A_tz,z\rangle$ as the \emph{quadratic Hamiltonian at infinity} of $H$. We call the remainder $h$ the \emph{sub-quadratic part} of $H$.
\end{defn}

Notice that equation \eqref{eq:hessian_bded} implies that the Hessian of $H$ is bounded and gives the automatic estimates
\begin{equation}\label{eq:decay_moduli_decayish}
	\begin{multlined}
		\left\|\nabla^2H\right\|_{L^\infty}\leq c_2+\|A\|_{L^\infty}=C_2,\\
		\sigma^h_1(R)\leq \frac{\max_t|\nabla h_t(0)|}{R}+c_2,\\
		\sigma^h_0(R)\leq \frac{\max_t|h_t(0)|}{R^2}+\sigma^h_1(R).
	\end{multlined}
\end{equation}
In particular, equation \eqref{eq:asyquad_def_asspt_sublin_grad} implies that also $\sigma_0^h(R)\to 0$ as $R\to\infty$.
For the sake of estimation, we can and will replace the tail functions $\sigma^h_1$ and $\sigma^h_0$ with their smallest majorant which is non-increasing: we shall denote for $j=0,1$
\begin{equation}\label{eq:def_of_non_incr_majorant_tail}
	\bar\sigma^h_j(R)=\sup_{r\geq R}\sigma^h_j(r)\geq\sigma_j^h(R).
\end{equation}
Clearly if $R'\geq R$ then $\bar\sigma^h_j(R')\leq\bar\sigma^h_j(R)$ and
\begin{equation}
	\lim_{R\to\infty}\bar\sigma_j^h(R)=\limsup_{R\to\infty}\sigma_j^h(R)=0
\end{equation}
because the limit of $\sigma_j^h(R)$ as $R\to\infty$ exists and is zero.

\begin{lemma}\label{lem:asyquad_welldef}
	If $H$ is an asymptotically quadratic Hamiltonian, then its quadratic Hamiltonian at infinity $Q$ and its sub-quadratic part $h$ are uniquely defined.
\end{lemma}
\begin{proof}
	Assume by contradiction that $H=Q+h=Q'+h'$, where $Q_t(z)=\frac12\left\langle A_tz,z\right\rangle$, $Q'_t(z)=\frac12\left\langle A'_tz,z\right\rangle$, $\sigma^{h}_1$ and $\sigma^{h'}_1$ go to $0$ as $R\to\infty$ but $A\neq A'$. Since $A\neq A'$, there is some $a>0$ such that $|(A-A')z|\geq a|z|$ for all $z\in\bR^{2n}$. Writing $h'=Q-Q'+h$,
	\begin{equation}
		\sigma^{h'}_1(|z|)\geq\frac{\left|\nabla h'_t(z)\right|}{|z|}=\frac{\left|\left(A_t-A'_t\right)z-\nabla h_t(z)\right|}{|z|}\geq a-\sigma^h_1(|z|)
	\end{equation}
	Passing to the limit $|z|\to\infty$ we contradict the sub-linear growth bound on $\nabla h'$. We conclude that $A'=A$, and therefore also $h=h'$.
\end{proof}

\begin{defn}\label{def:ALHD}
	Let $\upphi\in\Ham$ be a Hamiltonian diffeomorphism. If there exists an asymptotically quadratic Hamiltonian $H$ such that $\upphi=\phi^1_H$, we say $\upphi$ is a \emph{asymptotically linear Hamiltonian diffeomorphism} (ALHD), and we call $H$ the \emph{generating Hamiltonian} of $\upphi$.
\end{defn}
If $\upphi,\uppsi$ are ALHDs and $F,G$ are asymptotically quadratic Hamiltonians such that $\phi^1_F=\upphi$ and $\phi^1_G=\uppsi$, then $F\wedge G$ is asymptotically quadratic (inspect equation \eqref{eq:composition_of_hamilts}) and $\upphi\circ\uppsi=\phi^1_{F\wedge G}$. Therefore also $\upphi\circ\uppsi$ is an ALHD. If $\upphi$ is an ALHD, then $\upphi^k$ is also an ALHD for any fixed $k\in\bZ$, since if an asymptotically quadratic $H$ generates $\upphi$, then $H^{\times k}$ is also asymptotically quadratic and generates $\upphi^k$.

If $H=Q+h$ is asymptotically quadratic, then $\phi^t_Q$ is a linear symplectomorphism for all $t\in\bR$. We are thus led to the following
\begin{defn}\label{def:lin_map_at_infty}
	Let $\upphi$ be an ALHD and $H=Q+h$ be a generating asymptotically quadratic Hamiltonian for $\upphi$. The linear symplectomorphism $\upphi_\infty=\phi^1_Q\in\Sp(2n)$ is called the \emph{linear map at infinity}.
\end{defn}
\begin{prop}\label{prop:lin_map_infty_welldef}
	Let $\upphi$ be an ALHD. The linear map at infinity $\upphi_\infty$ of $\upphi$ does not depend on the chosen generating asymptotically quadratic Hamiltonian.
\end{prop}
\begin{proof}
	Let $H=Q+h$ be some asymptotically quadratic generating Hamiltonian for $\upphi$. Let's first show that
	\begin{equation}\label{eq:linmap_infty_sublin_close_to_ALHD}
		\left|\phi^1_H(z)-\phi^1_Qz\right|=o\left(|z|\right)\quad\text{as}\quad|z|\to\infty.
	\end{equation}
	Let $\tau\in[0,1]$ and $z\in\bR^{2n}$ be fixed. Set $x_t=\phi^t_Qz$ and $y_t=\phi^t_H(z)$.
	\begin{equation}\label{eq:1per_estimate_dist_of_flows-1}
		\begin{split}
			\left|y_\tau-x_\tau\right|&=\left|\int_0^\tau J_0A_tx_t-J_0\nabla H_t(y_t)dt\right|=\\
			&=\left|\int_0^\tau A_tx_t-\nabla H_t(x_t)+\nabla H_t(x_t)-\nabla H_t(y_t)dt\right|\leq\\
			&\leq \int_0^\tau\left|\nabla h_t(x_t)\right|dt+\left\|\nabla^2H\right\|_{L^\infty}\int_0^\tau\left|x_t-y_t\right|dt
		\end{split}
	\end{equation}
	We want to estimate the first term.
	Since $x_t=\phi^t_Qz$ is a linear flow, there exist constants $b,a>0$ such that
	\begin{equation}
		e^{-bt}|z|\leq |x_t|\leq e^{at}|z|\quad\forall t\in[0,1].
	\end{equation}
	Using this and the monotonicity of $\bar\sigma^h_1$ (see \eqref{eq:def_of_non_incr_majorant_tail}), we estimate
	\begin{equation}
		\begin{split}
			\int_0^\tau\left|\nabla h_t(x_t)\right|dt&\leq\int_0^\tau \sigma^h_1\left(\left|x_t\right|\right)\left|x_t\right|dt\leq \tau e^{a\tau}|z|\max_{t\in[0,\tau]}\bar\sigma^h_1\left(e^{-bt}|z|\right)\leq\\
			&\leq\tau e^{a\tau}\bar\sigma^h_1\left(e^{-b\tau}|z|\right)|z|
		\end{split}
	\end{equation}
	We found that
	\begin{equation}\label{eq:1per_estimate_dist_of_flows-2}
		\left|y_\tau-x_\tau\right|\leq \tau e^{a\tau}\bar\sigma^h_1\left(e^{-b\tau}|z|\right)|z|+\left\|\nabla^2H\right\|_{L^\infty}\int_0^\tau\left|y_t-x_t\right|dt
	\end{equation}
	which, using Grönwall's lemma \cite[Chapter 2, Lemma 2]{AbrahamMarsden1978_Foundations} at $\tau=1$ gives the estimate
	\begin{equation}\label{eq:1per_estimate_dist_of_flows-final}
		\left|\phi^1_H(z)-\phi^1_Qz\right|\leq e^{C_2+a}\bar\sigma^h_1\left(e^{-b}|z|\right)|z|,\quad C_2=\left\|\nabla^2H\right\|_{L^\infty}.
	\end{equation}
	\begin{rmk}
		We can take $a=\|A\|_{L^\infty}$ and then $C_2+a\leq 2C_2$. In the case of unitary system at infinity, we can take $a=b=0$.
	\end{rmk}
	The inequality \eqref{eq:1per_estimate_dist_of_flows-final} shows \eqref{eq:linmap_infty_sublin_close_to_ALHD}. Now, if we have $\upphi=\phi^1_H=\phi^1_{H'}$ for asymptotically quadratic Hamiltonians
	\begin{equation}
		H=Q+h,\quad H'=Q'+h',
	\end{equation}
	we obtain that
	\begin{equation}
		\left|\phi^1_Q(z)-\phi^1_{Q'}(z)\right|\leq\left|\phi^1_H(z)-\phi^1_Q(z)\right|+\left|\phi^1_{H'}(z)-\phi^1_{Q'}(z)\right|=o(|z|)\quad \text{as}\quad |z|\to\infty.
	\end{equation}
	But this is possible if and only if the two linear maps coincide.
\end{proof}
\begin{defn}\label{def:asy_quad_ndg_at_infty}
	Let $H=Q+h$ be an asymptotically quadratic Hamiltonian. We say that $H$ is \emph{non-resonant at infinity} if
	\begin{equation}\label{eq:def_of_nonres_at_infty}
		\det\left(\phi^1_Q-\bI\right)\neq 0.
	\end{equation}
	Let $\upphi$ be an ALHD and $\upphi_\infty$ its linear map at infinity. We say that $\upphi$ is \emph{non-degenerate at infinity} if
	\begin{equation}\label{eq:def_of_ndg_at_infty}
		\det\left(\upphi_\infty-\bI\right)\neq 0.
	\end{equation}
\end{defn}
Proposition \ref{prop:lin_map_infty_welldef} implies that an ALHD is non-degenerate at infinity if and only if it can be generated by an asymptotically quadratic Hamiltonian which is non-resonant at infinity, and then all its asymptotically quadratic generating Hamiltonians will be non-resonant at infinity.

\subsection{Fixed points of ALHDs}
\label{sub:fixed_points_of_alhds}

In this section we prove a first important consequence of non-degeneracy at infinity.
\begin{lemma}\label{lem:1-per_orbits_of_nondeg_at_infty_asyquad_are_unif_Linfty_bounded}
	Let $\upphi$ be an ALHD which is non-degenerate at infinity. There exists an $R_1>0$ such that $\Fix\upphi\subset B^{2n}_{R_1}(0)$.
\end{lemma}
\begin{proof}
	Let $\upphi_\infty$ be the linear map at infinity for $\upphi$. Since $\upphi$ is non-degenerate at infinity, \eqref{eq:def_of_ndg_at_infty} implies that
	\begin{equation}\label{eq:nonres_at_infty_fund_estimate}
		\left|\upphi_\infty z-z\right|\geq \nu_\infty|z|,\quad \nu_\infty=\frac{1}{\left|\left(\upphi_\infty-\bid\right)^{-1}\right|}>0.
	\end{equation}
	Let $H=Q+h$ be an asymptotically quadratic generating Hamiltonian for $\upphi$. Recall the estimate \eqref{eq:1per_estimate_dist_of_flows-final}. Putting it together with \eqref{eq:nonres_at_infty_fund_estimate}, we see
	\begin{equation}\label{eq:1per_estimate_below}
		\begin{split}
			\left|\phi^1_H(z)-z\right|&\geq\left|\phi^1_Qz-z\right|-\left|\phi^1_H(z)-\phi^1_Qz\right|\geq\\
			&\geq\left[\nu_\infty-e^{C_2+a}\bar\sigma^h_1\left(e^{-b}|z|\right)\right]|z|=\nu(|z|)|z|.
		\end{split}
	\end{equation}
	Notice that $\nu(|z|)\to \nu_\infty>0$ monotonically non-decreasing as $|z|\to\infty$.
	Since $\bar\sigma^h_1(R)\to 0$ as $R\to\infty$, we can define
	\begin{equation}\label{eq:fund_radius}
		R_1=\max\left\{R:\nu\left(R\right)\leq 0\right\}=\max\left\{R:\bar\sigma^h_1\left(e^{-b}R\right)\geq e^{-(C_2+a)}\nu_\infty\right\}.
	\end{equation}
	If $|z|>R_1$ then $\nu(|z|)>0$ so
	\begin{equation}
		\left|\phi^1_H(z)-z\right|\geq\nu(|z|)|z|>0.
	\end{equation}
	In particular any fixed point of $\phi^1_H$ has norm less than $R_1$.
\end{proof}
\begin{rmk}
	When the system at infinity is unitary, the bound on the fixed points simplifies slightly, because the linear flow at infinity preserves the norm:
	\begin{equation}\label{eq:fund_radius_unitary}
		R_1=\max\left\{R:\bar\sigma^h_1\left(R\right)\geq e^{-C_2}\nu_\infty\right\}.
	\end{equation}
\end{rmk}
\begin{defn}\label{def:resonance_proximity_cst}
	If $\upphi$ is an ALHD which is non-degenerate at infinity, we will call
	\begin{equation}\label{eq:resonance_proximity_cst}
		\nu_\infty=\left|\left(\upphi_\infty-\bid\right)^{-1}\right|^{-1}>0
	\end{equation}
	the \emph{resonance proximity constant}.
\end{defn}

\subsection{The Conley-Zehnder index}
\label{sub:the_conley_zehnder_index}

Denote by $\Sp(2n)$ the group of symplectic $2n\times 2n$-matrices, and by $\operatorname{SP}(2n)$ the space of continuous paths $M\colon[0,1]\to\Sp(2n)$ such that $M(0)=\bI$. If $\det(M(1)-\bI)\neq 0$ we say that $M$ is a \emph{non-degenerate path}. The set of non-degenerate paths is denoted by $\operatorname{SP}^*(2n)$. This is an open dense subset of $\operatorname{SP}(2n)$.

The Conley-Zehnder index \cite{GelfandLidskii1955_Stability,YakubovichStarzhinskii1975_LODE,ConleyZehnder1984_CZ,SalamonZehnder1992_MorseTheoryMaslovIndex,Abbondandolo2001_MorseHamilt, Long2002_IndexTheorySp,Gutt2012_CZ} is an integer associated to non-degenerate paths of symplectic matrices, which we denote by
\begin{equation}
	\begin{split}
		\CZ\colon\operatorname{SP}^*(2n)&\to\bZ\\
		M&\mapsto\CZ(M_t).
	\end{split}
\end{equation}
Two non-degenerate paths $M$ and $M'$ are homotopic through non-degenerate paths if and only if $\CZ(M_t)=\CZ(M'_t)$ \cite[Theorem 3.3(ii)]{SalamonZehnder1992_MorseTheoryMaslovIndex}.

In principle, the Conley-Zehnder index is only defined for non-degenerate paths. In practice, it important to make sense of the Conley-Zehnder index of a \emph{degenerate} path, i.e. one whose endpoint does contain 1 in its spectrum. Following \cite[Section 1.3.7]{Abbondandolo2001_MorseHamilt} or \cite[Section 5.4]{Long2002_IndexTheorySp} we can extend $\CZ$ to the whole space of paths based at the identity, setting
\begin{equation}\label{eq:lsc_cz}
	\CZ\colon\operatorname{SP}(2n)\to\bZ,\quad \CZ(M_t)=\liminf_{\stackrel{M^*\in\operatorname{SP}^*(2n),}{M^*\to M}}\CZ\left(M^*_t\right).
\end{equation}

The Conley-Zehnder index of a path of symplectic matrices is a symplectic invariant of the path \cite[Theorem 1.3.11]{Abbondandolo2001_MorseHamilt}. Indeed, if $M\in\operatorname{SP}(2n)$ and $\Phi\in\Sp(2n)$ is a linear symplectomorphism, then
\begin{equation}\label{eq:CZ_sympl_invariant}
	\CZ\left(M_t\right)=\CZ\left(\Phi^{-1}M_t\Phi\right).
\end{equation}
The parity of the Conley-Zehnder index of a path $M\in\operatorname{SP}(2n)$ depends only on the end-point $M_1$, as can be seen using the following formula, shown in \cite[Theorem 3.3 (iii)]{SalamonZehnder1992_MorseTheoryMaslovIndex}
\begin{equation}\label{eq:parity_CZ}
	\operatorname{sign}\det\left(M_1-\bI\right)=\left(-1\right)^{\CZ\left(M_t\right)-n}.
\end{equation}
Loops of symplectic matrices act on paths of symplectic matrices by composition. Let $M\in\operatorname{SP}(2n)$, and consider a loop $\Phi\colon S^1\to\Sp(2n)$ based at $\Phi(0)=\bI$. Then  $t\mapsto\Phi_tM_t$ is a path in $\operatorname{SP}(2n)$ and the following ``loop composition formula'', which can be found e.g. in \cite[Section 2.4]{Salamon1999_LecturesFH}, holds:
\begin{equation}\label{eq:CZ_loop_composition_formula}
	\CZ\left(\Phi_tM_t\right)=\CZ\left(M_t\right)+2\Mas\left(\Phi_t\right),
\end{equation}
where $\Mas\colon\pi_1\left(\Sp(2n)\right)\to\bZ$ denotes the Maslov index of a loop of linear symplectomorphisms \cite[Theorem 2.2.12]{McDuffSalamon2017_Intro}.

A path $M\in\operatorname{SP}(2n)$ can be extended periodically to a path defined on  the whole $\bR$ by setting
\begin{equation}
	M(t+1)=M(t)M(1)\quad\forall t\in\bR.
\end{equation}
For $k\in\bZ$, we define the $k$-fold iterate $M^{\times k}\in\operatorname{SP}(2n)$ of the path $M$ by extending it periodically and then setting $M^{\times k}(t)=M(kt)$ for $t\in[0,1]$. Notice that by construction $M^{\times k}(1)=M(1)^k$. We are interested in the Conley-Zehnder index of the iterated path, because we will need to understand the index of iterated fixed points. It is easy to see \cite[Section 1.4.1]{Abbondandolo2001_MorseHamilt} that
\begin{equation}
	k\mapsto \CZ\left(M^{\times k}_t\right)
\end{equation}
grows at most linearly as $k\to\infty$. Define the \emph{mean Conley-Zehnder index} of the path $M$ to be the rate of growth of its index under iteration:
\begin{equation}
	\meanCZ\left(M_t\right)=\lim_{k\to\infty}\frac{\CZ\left(M^{\times k}_t\right)}{k}.
\end{equation}
We have the following useful bound on the Conley-Zehnder index of an iterated path in terms of the mean index \cite[Theorem 1.4.3]{Abbondandolo2001_MorseHamilt}:
\begin{equation}\label{eq:iterated_CZ_meanCZ_bound}
	\left|\CZ\left(M^{\times k}_t\right)-k\meanCZ\left(M_t\right)\right|\leq n.
\end{equation}
The inequality is strict when the $k$th iterate is non-degenerate.

\subsection{A twist condition}
\label{sub:a_twist_condition}

Linearising the flow, we can associate a Conley-Zehnder index to a 1-periodic orbit of a Hamiltonian vector field. Let $H\in C^\infty(S^1\times\bR^{2n})$ be any Hamiltonian and $z_0\in\Fix\phi^1_H$. Then $[0,1]\ni t\mapsto d\phi^t_H(z_0)$ is a path of linear symplectomorphisms. Since the Conley-Zehnder index is a symplectic invariant \eqref{eq:CZ_sympl_invariant}, we can choose an arbitrary symplectic basis to represent the linear flow as a path of symplectic matrices and denote
\begin{equation}
	\begin{split}
		\CZ(z_0,H)=\CZ\left(d\phi^t_H(z_0)\right),\\
		\meanCZ(z_0,H)=\meanCZ\left(d\phi^t_H(z_0)\right).
	\end{split}
\end{equation}
This integer depends on the chosen generating Hamiltonian, but its parity does not.

Notice that $\meanCZ\left(z_0,H^{\times k}\right)=k\meanCZ\left(z_0,H\right)$. From equation \eqref{eq:iterated_CZ_meanCZ_bound} we obtain that
\begin{equation}\label{eq:iterated_CZ_meanCZ_bound-fixedpt}
	\left|\CZ\left(z_0,H^{\times k}\right)-k\meanCZ\left(z_0,H\right)\right|\leq n.
\end{equation}

If $H$ defines an asymptotically quadratic Hamiltonian system, there is a well defined linear Hamiltonian system at infinity. We will use this to define two central notions for the paper.
\begin{defn}\label{def:index_at_infty}
	Given $H=Q+h$ asymptotically quadratic, define the \emph{index at infinity} $\ind_\infty(H)$ and the \emph{mean index at infinity} $\overline{\ind_\infty}(H)$ of $H$ as
	\begin{equation}
		\ind_\infty(H)=\CZ\left(\phi^t_Q\right)\in\bZ,\quad\overline{\ind}_\infty(H)=\meanCZ\left(\phi^t_Q\right)\in\bR
	\end{equation}
	where we have chosen an arbitrary symplectic basis of $\bR^{2n}$ to represent the linear flow at infinity as a path of symplectic matrices.
\end{defn}
This integer again depends on the generating Hamiltonian, but its parity does not.
As was discussed in the introduction, the twist condition we have in mind is given by the comparison between the mean index at infinity and the mean index of a fixed point.
\begin{defn}\label{def:twist_fixed_pt}
	Let $\upphi$ be an ALHD and $z_0\in\Fix\upphi$ be a fixed point. We say that $z_0$ is a \emph{twist fixed point} if there exists a generating Hamiltonian $H$ for $\upphi$ for which
	\begin{equation}\label{eq:twist_fixed_pt_def}
		\overline{\ind}_\infty(H)\neq\meanCZ\left(z_0,H\right).
	\end{equation}
\end{defn}
Even though the quantities entering the twist condition depend on the chosen generating Hamiltonian, the twist condition itself does not.
\begin{lemma}\label{lem:twisted_fixed_pt_well_def}
	Let $\upphi$ be an ALHD and $z_0\in\Fix\upphi$ be a fixed point. Let $F=Q+f$ and $G=P+g$ be asymptotically quadratic Hamiltonians generating $\upphi$.
	\begin{equation}\label{eq:twisted_fixed_pt_well_def}
		\overline{\ind}_\infty F-\meanCZ\left(z_0,F\right)=\overline{\ind}_\infty G-\meanCZ\left(z_0,G\right).
	\end{equation}
\end{lemma}
\begin{proof}
	Since $\upphi=\phi^1_F=\phi^1_G$, the flows of $F$ and $G$ are related by a loop of asymptotically linear Hamiltonian diffeomorphisms. For example, we can take the concatenation
	\begin{equation}
		K=F\wedge\overline{G}=R+k,\quad \phi^\cdot_{F\wedge\overline{G}}=\phi^\cdot_F\wedge\left(\phi^\cdot_G\right)^{-1}.
	\end{equation}
	The quadratic Hamiltonian $R$, which is just the concatenation of $Q$ after $P$, generates a loop of linear symplectomorphisms relating the linear flows of $Q$ and $P$. In lemma \ref{lem:loops_ALHDs_maslov} it is shown that any Hamiltonian $K$ generating a loop of asymptotically linear Hamiltonian diffeomorphisms has the following property: there is an integer $\Mas K\in\bZ$ such that
	\begin{equation}
		\Mas K=\Mas\left(d\phi^t_K(z)\right)=\Mas\left(\phi^t_R\right)\quad\forall z\in\bR^{2n}.
	\end{equation}
	Now, since $\upphi^l=\phi^1_{F^{\times l}}=\phi^1_{G^{\times l}}$ there is a Hamiltonian $K_l=R_l+k_l$ generating a loop of asymptotically linear Hamiltonian diffeomorphisms relating the corresponding flows. Set $\kappa_l=\Mas K_l\in\bZ$. Using the loop composition formula \eqref{eq:CZ_loop_composition_formula},
	\begin{equation}
		\begin{multlined}
			\overline{\ind}_\infty G-\meanCZ\left(z_0,G\right)=\lim_{l\to\infty}\frac{1}{l}\left[\CZ\left(\phi^{lt}_P\right)-\CZ\left(d\phi^{lt}_G(z_0)\right)\right]=\\
			=\lim_{l\to\infty}\frac{1}{l}\left[\CZ\left(\phi^{lt}_Q\right)+2\Mas\left(\phi^t_{R_l}\right)-\CZ\left(d\phi^{lt}_H(z_0)\right)-2\Mas\left(d\phi^t_{K_l}(z_0)\right)\right]=\\
			=\lim_{l\to\infty}\frac{1}{l}\left[\CZ\left(\phi^{lt}_Q\right)-\CZ\left(d\phi^{lt}_H(z_0)\right)+2\kappa_l-2\kappa_l\right]=\overline{\ind}_\infty H-\meanCZ\left(z_0,H\right).
		\end{multlined}
	\end{equation}
	This concludes the proof.
\end{proof}

\section{Modifying linear and asymptotically linear Hamiltonian systems}
\label{sec:modifying_linear_and_asymptotically_linear_hamiltonian_systems}

In this section we explain three constructions which will be important for the proof of the Poincaré-Birkhoff theorem for rapidly asymptotically unitary Hamiltonian diffeomorphisms.

The first construction allows us to change the index at infinity of an asymptotically linear Hamiltonian system. It is an elementary application of composition with loops of linear symplectomorphisms. We will focus on iterated Hamiltonians, in order to construct a Hamiltonian generating a given iterate of an ALHD but with shifted index at infinity. The action of the fixed points calculated with this ``re-indexed'' Hamiltonian remains the same, but their index is shifted by the same amount as the shifting of index at infinity.

The second construction is an interpolation of non-resonant linear unitary Hamiltonian systems. We are given two quadratic Hamiltonians which generate linear, unitary flows and we assume they have the same index. The goal is to construct a semi-quadratic Hamiltonian which interpolates between the two given quadratic Hamiltonians. The interpolation is done slowly on a large annular region, so that no additional 1-periodic orbits are created in the process.

The last construction is a delicate truncation of the sub-quadratic part of an asymptotically quadratic Hamiltonian with unitary flow at infinity. We are given an asymptotically quadratic Hamiltonian which is non-resonant at infinity, and whose linear flow at infinity is unitary. The result of the truncation is a Hamiltonian which is a compact perturbation of its quadratic Hamiltonian at infinity. The truncation happens on an annular region and again conditions are given so that no new 1-periodic orbits are created.

All these constructions will be applied to certain iterated Hamiltonian systems in later sections. We give quantitative estimates on the truncation and interpolation radii, in order to be able to control them along iterations of a Hamiltonian system.

\subsection{Linear loops acting on asymptotically linear Hamiltonian systems}
\label{sub:linear_loops_acting_on_asymptotically_linear_hamiltonian_systems}

We start with a lemma listing some elementary properties of composition of an asymptotically quadratic Hamiltonian with a quadratic Hamiltonian generating a loop of linear symplectomorphisms.
\begin{lemma}\label{lem:linear_flow_composition_preserves_action}
	Let $G\colon S^1\times \bR^{2n}\to\bR$, $G_t(z)=R_t(z)+g_t(z)$ be an asymptotically quadratic Hamiltonian and $P_t(z)=\frac12\left\langle B_tz,z\right\rangle$ a time-dependent quadratic Hamiltonian generating a loop in $\Sp(2n)$. Denote by $\sigma^g_1,\sigma^g_0$ the tail functions of $g$ \eqref{eq:decay_moduli_growth_moduli}. Define $H=P\#G$.
	\begin{enumerate}
		\item\label{item:linflow_comp_props_preserves_asyquad_ndg} If $\phi^t_P\in\Un(n)\subset\Sp(2n)$ for all $t$, then $H$ is asymptotically quadratic. In fact writing $H=Q+h$, we have $\sigma^h_j=\sigma^g_j$ for $j=0,1$. Moreover, if $G$ is non-resonant at infinity, then so is $H$.
		\item\label{item:linflow_comp_props_preserves_action_of_fixedpts} For every $z\in\Fix\phi^1_{H}=\Fix\phi^1_G$, we have
			\begin{equation}\label{eq:looping_preserves_action}
				\cA_{H}(z)=\cA_G(z)
			\end{equation}
	\end{enumerate}
\end{lemma}
\begin{proof}
	Set $\phi^t_P=:N_t\in\Sp(2n)$. We are assuming that $N_0=N_1=\bI$.
	\begin{enumerate}
		\item By the definition of the composition of Hamiltonians \eqref{eq:composition_of_hamilts}, we have
			\begin{equation}
				P\#G_t=P\#\left(R_t+g_t\right)=P_t+R_t\circ\left(N_t\right)^{-1}+g_t\circ\left(N_t\right)^{-1}
			\end{equation}
			which prompts us to set $U_t=\left(N_t\right)^{-1}$ and write
			\begin{equation}
				Q_t=P_t+R_t\circ U_t,\quad h_t=g_t\circ U_t.
			\end{equation}
			If $R_t(z)=\frac12\left\langle A'_tz,z\right\rangle$ then $Q_t(z)=\frac12\left\langle A_tz,z\right\rangle$ where $A=B+U^TA'U$.
			The statement on non-resonance at infinity follows from the fact that $N_t$ is a loop.
			The asymptotic growth bounds follow simply from the chain rule and the norm preservation of unitary maps. Indeed, since $N_t,U_t\in\Un(n)$,
			\begin{equation}
				\left\|\nabla^2 h\right\|_{L^\infty}=\left\|N_t\left(\nabla^2 g\circ U_t\right)U_t\right\|_{L^\infty}=\left\|\nabla^2 g\right\|_{L^\infty}<\infty.
			\end{equation}
			Similarly,
			\begin{equation}
				\begin{multlined}
					\sigma^h_1(R)=\sup_{|z|\geq R}\frac{\left|\nabla h(z)\right|}{|z|}=\sup_{|z|\geq R}\frac{\left|N_t\nabla g_t\left(U_tz\right)\right|}{|z|}=\\
					\sup_{|U_tz|\geq R}\frac{\left|\nabla g_t\left(U_tz\right)\right|}{|U_tz|}=\sigma_1^g(R)
				\end{multlined}
			\end{equation}
			and same for $\sigma_0$.
		\item Recall the definition of the action of a fixed point in \eqref{eq:action_of_fixed_pt} and that $\phi^t_{H}=N_t\phi^t_G$. Here $N_t\in\Sp(2n)$ doesn't have to be unitary. We calculate
			\begin{equation*}
				\begin{multlined}
					\cA_{H}\left(z\right)=\int_0^1\frac12\left\langle N_t\phi^t_G(z), J_0\frac{d}{dt}\left(N_t\phi^t_G(z)\right)\right\rangle-P\#G_t\left(N_t\phi^t_G(z)\right)dt=\\
					=\int_0^1\frac12\left\langle N_t\phi^t_G(z), B_tN_t\phi^t_G(z)\right\rangle-P_t\left(N_t\phi^t_G(z)\right)dt\,+\\
					\qquad+\int_0^1\frac12\left\langle N_t\phi^t_G(z), J_0 N_t\dot\phi^t_G(z)\right\rangle-G_t\left(\phi^t_G(z)\right)dt=\\
					=\int_0^1\frac12\left\langle\phi^t_G(z),J_0\dot\phi^t_G(z)\right\rangle-G_t\left(\phi^t_G(z)\right)dt=\cA_G(z)
				\end{multlined}
			\end{equation*}
			since $N_t$ is a symplectomorphism for all $t$, so $(N_t)^TJ_0N_t=J_0$.
	\end{enumerate}
\end{proof}
\begin{rmk}
	We are tacitly imposing a \emph{normalization condition} on linear symplectomorphisms: we don't allow constant terms in the quadratic functions defining quadratic Hamiltonians. This implies that the action of the origin as the fixed point of a linear symplectomorphism is always zero.
\end{rmk}

\subsubsection{Re-indexing iterated Hamiltonians}
\label{ssub:re_indexing_iterated_hamiltonians}

Let $H_t=Q_t+h_t$ be a asymptotically quadratic Hamiltonian, $\upphi=\phi^1_H$, and $\upphi_\infty=\phi^1_Q$. Here we are interested in composing the quadratic Hamiltonians at infinity of $H^{\times k}$ with loops which change its index at infinity and the effect this has on the index and action of $k$th iterates of fixed points of $\upphi$.

The following lemma is a well known property of the iterated Conley-Zehnder index, which can be proven using the parity calculation \cite[Theorem 3.3 (iii)]{SalamonZehnder1992_MorseTheoryMaslovIndex}, see equation \eqref{eq:parity_CZ} above.
\begin{lemma}\label{lem:CZ_parity}
	If $k,l\in\bZ$ have the same parity, then $\ind_\infty\left(H^{\times k}\right)$ and $\ind_\infty\left(H^{\times l}\right)$ have the same parity.
\end{lemma}

Fix two odd integers $k>l\geq 1$. The previous lemma implies in particular that
\begin{equation}\label{eq:difference_indinfty_odd_iterates_is_even}
	\ind_\infty\left(H^{\times k}\right)-\ind_\infty\left(H^{\times l}\right)=2\mu
\end{equation}
for some $\mu\in\bZ$.
Using the estimate \eqref{eq:iterated_CZ_meanCZ_bound} (see \cite[Theorem 1.4.3]{Abbondandolo2001_MorseHamilt}), we see that
\begin{equation}
	\left|\ind_\infty H^{\times j}-j\overline{\ind}_\infty H\right|\leq n\quad\forall j\in\bZ.
\end{equation}
We can thus estimate $\sigma=2\mu$ in terms of the mean index at infinity, namely
\begin{equation}\label{eq:reloop_index_shift_estimate}
	\left|\sigma-(k-l)\overline\ind_\infty H\right|\leq 2n.
\end{equation}
Since the Maslov index induces an isomorphism between $\pi_1\left(\Sp(2n)\right)$ and $\bZ$ \cite[Theorem 2.2.12]{McDuffSalamon2017_Intro}, and $\Sp(2n)$ is homotopy equivalent to $\Un(n)$ \cite[Proposition 2.2.4]{McDuffSalamon2017_Intro}, there always exists a loop of unitary matrices based at $\bI$ with Maslov index $\mu$. This loop can always be generated by a time-dependent quadratic Hamiltonian $P^\mu_t(z)=\frac12\left\langle B^\mu_tz,z\right\rangle$. Using composition $\#$ and concatenation $\wedge$ of Hamiltonians (recall their definition in equation \eqref{eq:composition_of_hamilts}), define the following Hamiltonian
\begin{equation}\label{eq:def_of_relooping_at_infty}
	H^{k\ominus l}=\left(\overline{P^\mu}\#H^{\times(k-l)}\right)\wedge H^{\times l}.
\end{equation}
\begin{rmk}
	The basic reasoning which led to this formula for $H^{k\ominus l}$ is to consider $H^{\times (k-l)}\wedge H^{\times l}$, which generates a flow homotopic to the one generated by $H^{\times k}$, but which separates the part with ``excessive'' index $H^{\times(k-l)}$ from $H^{\times l}$. Then one kills the index of the ``excessive part'' via the loop generated by $\overline{P^\mu}$.
\end{rmk}

\begin{lemma}\label{lem:relooped_hamilt_properties}
	The Hamiltonian $H^{k\ominus l}$ is an asymptotically quadratic Hamiltonian which generates $\upphi^k$. It has the following properties.
	\begin{enumerate}
		\item $\ind_\infty\left(H^{k\ominus l}\right)=\ind_\infty\left(H^{\times l}\right)$. Moreover if $\bar z\in\Fix\phi^k_H=\Fix\phi^1_{H^{k\ominus l}}$ then
			\begin{equation}\label{eq:reloop_index_shift_k-ppoint}
				\CZ\left(\bar z,H^{k\ominus l}\right)=\CZ\left(\bar z,H^{\times k}\right)-2\mu
			\end{equation}
		\item If $z_0\in\Fix\phi^1_H$ is seen as a $k$-periodic point, then
			\begin{equation}
				\cA_{H^{k\ominus l}}(z_0)=\cA_{H^{\times k}}(z_0)=k\cA_H(z_0).
			\end{equation}
	\end{enumerate}
\end{lemma}
\begin{proof}
	That $H^{k\ominus l}$ generates $\upphi^k$ follows immediately from definition, and that it is asymptotically quadratic follows from lemma \ref{lem:linear_flow_composition_preserves_action}, point \ref{item:linflow_comp_props_preserves_asyquad_ndg}.
	\begin{enumerate}
		\item Notice that $H^{k\ominus l}$ generates a path which is homotopic with fixed endpoints to the path generated by $\overline{P^\mu}\#H^{\times k}$. The quadratic Hamiltonian at infinity of this Hamiltonian is $\overline P^\mu\#Q^{\times k}$. Therefore, by the invariance of the Conley-Zehnder index and the loop composition formula \eqref{eq:CZ_loop_composition_formula},
			\begin{equation}
				\begin{multlined}
					\ind_\infty\left(H^{k\ominus l}\right)=\CZ\left(\left(\phi^t_{P^\mu}\right)^{-1}\circ\phi^t_{Q^{\times (k-l)}}\wedge\phi^t_{Q^{\times l}}\right)=\\
					=\CZ\left(\left(\phi^t_{P^\mu}\right)^{-1}\circ\phi^t_{Q^{\times k}}\right)=\\
					=\CZ\left(\phi^t_{Q^{\times k}}\right)+2\Mas\left(\left(\phi^t_{P^\mu}\right)^{-1}\right)=\\
					=\ind_\infty\left(H^{\times k}\right)-2\mu=\ind_\infty\left(H^{\times l}\right).
				\end{multlined}
			\end{equation}
			The calculation of the index of 1-periodic orbits of $H^{k\ominus l}$ is completely analogous.
		\item Notice that $z_0\in\Fix\phi^1_H\implies z_0\in\Fix\phi^l_H\cap\Fix\phi^{k-l}_H$. So we are reduced to the situation of two Hamiltonians $F,G$ and a point $z_0\in\Fix\phi^1_F\cap\Fix\phi^1_G$. Then $z_0\in\Fix\phi^1_F\circ\phi^1_G$ and
			\begin{equation}
				\begin{multlined}
					\cA_{F\wedge G}(z_0)=\int_0^{\frac12}\left(\phi^{\rho(2\,\cdot\,)}_G(z_0)\right)^*\lambda_0-2\rho'(2t)G_{\rho(2t)}\circ\phi^{\rho(2t)}_G(z_0)dt+\\
					+\int_{\frac12}^1\left(\phi^{\rho(2\,\cdot\,-1)}_F\circ\phi^1_G(z_0)\right)^*\lambda_0-2\rho'(2t)F_{\rho(2t-1)}\circ\phi^{\rho(2t-1)}_F\circ\phi^1_G(z_0)dt\\
					=\int_0^1\left(\phi^\cdot_G(z_0)\right)^*\lambda_0-G_t\circ\phi^t_G(z_0)dt+\int_0^1\left(\phi^\cdot(z_0)\right)^*\lambda_0-F_t\circ\phi^t_F(z_0)dt=\\
					=\cA_G(z_0)+\cA_F(z_0)
			\end{multlined}
		\end{equation}
		Using lemma \ref{lem:linear_flow_composition_preserves_action}, point \ref{item:linflow_comp_props_preserves_action_of_fixedpts}, we calculate
			\begin{equation}
				\begin{multlined}
				\cA_{H^{k\ominus l}}(z_0)=\cA_{H^{\times l}}(z_0)+\cA_{\overline P^\mu\#H^{\times(k-l)}}(z_0)=\\
				=l\cA_H(z_0)+\cA_{H^{\times(k-l)}}(z_0)=l\cA_H(z_0)+(k-l)\cA_{H}(z_0)=k\cA_H(z_0).
				\end{multlined}
			\end{equation}
	\end{enumerate}
\end{proof}
\begin{rmk}
	Notice that in general the action of the $k$-periodic orbits which are not $k$-fold iterates of 1-periodic orbits might not be preserved by this procedure.
\end{rmk}

\subsection{Interpolation and truncation}
\label{sub:interpolation_and_truncation}

In this section we develop the interpolation and truncation constructions which we briefly introduced above. Both these constructions rely on the linear system at infinity being unitary.

\subsubsection{Interpolation of quadratic Hamiltonians}
\label{ssub:interpolation_of_quadratic_hamiltonians}

First, a preliminary definition. Let $Q^0,Q^1$ be quadratic Hamiltonians and $\upphi_0,\upphi_1\in\Sp(2n)$ the linear symplectomorphisms which they define. Assume that $\det(\upphi_j-\bid)\neq 0$ and
\begin{equation}
	\CZ\left(\phi^t_{Q^0}\right)=\CZ\left(\phi^t_{Q^1}\right).
\end{equation}
Then there is a path $\mathcal{Q}\colon [0,1]\times S^1\times\bR^{2n}\to\bR$, $\mathcal{Q}=\mathcal{Q}^s_t$ of quadratic Hamiltonians, say
\begin{equation}\label{eq:nonres_homotopy_lin_systems}
	\mathcal{Q}^s_t(z)=\frac12\left\langle\bA^s_tz,z\right\rangle,
\end{equation}
such that
\begin{equation}\label{eq:nonres_homotopy_lin_systems_props}
	\mathcal{Q}^0=Q^0,\quad\mathcal{Q}^1=Q^1,\quad \det\left(\phi^1_{\mathcal{Q}^s}-\bid\right)\neq 0\quad\forall s\in[0,1].
\end{equation}
We call such a homotopy of quadratic Hamiltonians a \emph{non-resonant homotopy} of linear Hamiltonian systems. If moreover $\phi^t_{Q^0},\phi^t_{Q^1}\in\Un(n)\subset\Sp(2n)$ for all $t\in[0,1]$, it is always possible to chose $\mathcal{Q}$ in such a way that we also have $\phi^t_{\mathcal{Q}^s}\in\Un(n)$ for all $(s,t)\in[0,1]\times[0,1]$. We call such a homotopy a non-resonant homotopy of unitary Hamiltonian systems.

\begin{prop}\label{prop:nonresonant_interpolation_quadratic_forms}
	Let $Q^0,Q^1\colon S^1\times\bR^{2n}\to\bR$ be two 1-periodic time-dependent quadratic Hamiltonians such that $\det\left(\phi^1_{Q^i}-\bI\right)\neq 0$ for $i=0,1$ and
	\begin{equation}\label{eq:nonresonant_interpolation-sameindex}
		\CZ\left(\phi^t_{Q^0}\right)=\CZ\left(\phi^t_{Q^1}\right).
	\end{equation}
	Assume that $\phi^t_{Q^i}\in\Un(n)$ for all $t$ and all $i=0,1$. Then for every $R_0>0$ and every non-resonant homotopy of unitary Hamiltonian systems, there exists an $R_1>R_0$ and a Hamiltonian $K\in C^\infty(S^1\times \bR^{2n})$ with the following properties:
	\begin{enumerate}
		\item $K$ interpolates between $Q^0$ and $Q^1$:
			\begin{equation}
				K_t=Q^0_t,\ \forall |z|\leq\sqrt{R_0},\quad K_t=Q^1_t\ \forall |z|\geq\sqrt{R_1}.
			\end{equation}
		\item $K$ has no non-trivial 1-periodic orbits:
			\begin{equation}
				\Fix\phi^1_K=\{0\}
			\end{equation}
	\end{enumerate}
\end{prop}
\begin{proof}
	Since the Conley-Zehnder indices of the flows of $Q^0$ and $Q^1$ are the same, the two quadratic Hamiltonians are non-resonant homotopic. Choose a non-resonant homotopy of unitary Hamiltonian systems defined by a path of quadratic Hamiltonians $\mathcal{Q}=\mathcal{Q}^s_t$, $s\in[0,1]$. Since the flow of $\mathcal{Q}^s$ is unitary for all $s$, its corresponding symmetric matrix \eqref{eq:nonres_homotopy_lin_systems} satisfies $-J_0\bA^s_t\in\operatorname{\mathfrak{u}}(n)\subset\operatorname{\mathfrak{o}}(2n)$ for all $(s,t)$.

	Given an arbitrary $R_0>0$, let $R_1>R_0$ and let $\chi\colon[0,\infty)\to[0,1]$ be a non-decreasing smooth function such that $\chi(r)=0$ for all $r<R_0$ and $\chi(r)=1$ for all $r>R_1$. The Hamiltonian $K$ is defined as follows:
	\begin{equation}\label{eq:def_of_interpolation_at_infty}
		K(t,z)=\mathcal{Q}^{\chi\left(|z|^2\right)}_t(z)=\frac12\left\langle\bA^{\chi\left(\left|z\right|^2\right)}_tz,z\right\rangle.
	\end{equation}
	We want to determine the function $\chi$ and the radius $R_1$ in terms of $\mathcal{Q}$ and $R_0$ so that the Hamiltonian system defined by $K$ has only $0\in\bR^{2n}$ as 1-periodic orbit. We calculate the Hamiltonian vector field
	\begin{equation}\label{eq:interpolation_at_infty_HamVF}
		X_K(t,z)=-J_0\bA^{\chi\left(|z|^2\right)}_tz-\chi'\left(|z|^2\right)\left\langle\left.\frac{\partial\bA^s_t}{\partial s}\right|_{s=\chi\left(|z|^2\right)}z,z\right\rangle J_0z
	\end{equation}
	The Hamilton equations for an integral curve $x\colon[0,T]\to\bR^{2n}$ of $K$ can thus be written as
	\begin{equation*}
		\dot x=X_K(x)\iff \dot x+J_0\bA^{\chi\left(|x|^2\right)}_tx=-\chi'\left(|x|^2\right)\left\langle\left.\frac{\partial\bA^s_t}{\partial s}\right|_{s=\chi\left(|x|^2\right)}x,x\right\rangle J_0x
	\end{equation*}
	
	Let's show that the norm of an integral curve of $K$ is constant. Indeed, if $x\colon [0,T]\to\bR^{2n}$ is an integral curve of $K$,
	\begin{equation}\label{eq:interp_hamilt_unitary_case_norm_cst}
		\frac12\frac{d}{dt}|x|^2=\left\langle x,\dot{x}\right\rangle=-\left\langle x,J_0\bA^{\chi\left(|x|^2\right)}_tx\right\rangle-\chi'\left(|x|^2\right)\left\langle\left.\frac{\partial\bA^s_t}{\partial s}\right|_{s=\chi\left(|x|^2\right)}x,x\right\rangle\left\langle x,J_0x\right\rangle=0
	\end{equation}
	because $J_0\bA^s_t\in\operatorname{\mathfrak{u}}(n)\subset\operatorname{\mathfrak{o}}(2n)$ which is the set of skew symmetric matrices. Therefore, if we set $r_0=|x(0)|^2$, $\chi_0=\chi(r_0)$, $\chi'_0=\chi'(r_0)$, we can use the fact that $\bA^s$ gives a non-degenerate Hamiltonian linear system for all $s$ to invert the operator $\frac{d}{dt}+J_0\bA^{\chi_0}_t$ using the variation of constants method, see \cite[\S III, Proposition 2]{Ekeland1990_ConvexityMethods}. We obtain the integral expression
	\begin{equation}
		x_t=\bM^{\chi_0}_t\left[x_0-\int_0^t\chi'_0\left\langle\left.\frac{\partial\bA^s_t}{\partial s}\right|_{s=\chi_0}x_\tau,x_\tau\right\rangle\left(\bM^{\chi_0}_\tau\right)^{-1}J_0x_\tau d\tau\right]
	\end{equation}
	where $\bM^s\colon[0,1]\to\Un(n)\subset\Sp(2n)$ is  $\bM^s_t=\phi^t_{\mathcal{Q}^s}$. We use this formula to estimate
	\begin{equation}\label{eq:interpolation_at_infty_perorb_closure_estimate}
		\begin{split}
			\left|x_1-x_0\right|&=\left|\bM^{\chi_0}_1 x_0-x_0-\int_0^1\chi_0'\left\langle\left.\frac{\partial\bA^s_t}{\partial s}\right|_{s=\chi_0}x_\tau,x_\tau\right\rangle\bM^{\chi_0}_{1-\tau}x_\tau d\tau\right|\geq\\
			&\geq \left|\left|\bM^{\chi_0}_1 x_0-x_0\right|-\left|\int_0^1\chi_0'\left\langle\left.\frac{\partial\bA^s_t}{\partial s}\right|_{s=\chi_0}x_\tau,x_\tau\right\rangle\bM^{\chi_0}_{1-\tau}x_\tau d\tau\right|\right|
		\end{split}
	\end{equation}
	Since $\mathcal{Q}$ is a non-resonant homotopy,
	\begin{equation}
		C_0=\min_{s\in[0,1],\,z\in\bR^{2n}\setminus0}\frac{\left|\phi^1_{\mathcal{Q}^s}z-z\right|}{|z|}>0.
	\end{equation}
	Indeed, define
	\begin{equation}
		\nu_\infty(s)=\left|\left(\phi^1_{\mathcal{Q}^s}-\bid\right)^{-1}\right|^{-1}>0\quad\forall s\in[0,1].
	\end{equation}
	Notice that
	\begin{equation}
		\left|\phi^1_{\mathcal{Q}^s}z-z\right|\geq\left|\left(\phi^1_{\mathcal{Q}^s}-\bid\right)^{-1}\right|^{-1}|z|
	\end{equation}
	which implies that
	\begin{equation}
		C_0\geq\min_{s\in[0,1]}\nu_\infty(s)>0.
	\end{equation}
	Using $C_0$ we can estimate the first term in the second line of equation \ref{eq:interpolation_at_infty_perorb_closure_estimate} by
	\begin{equation}\label{eq:parametric_nnres_perorb_estimate}
		\left|\bM^s_1z-z\right|\geq C_0|z|\quad\forall z\in\bR^{2n}\quad \forall s\in[0,1].
	\end{equation}
	with equality if and only if $z=0$.
	We are finished if we can bound the integral in the second line of equation \eqref{eq:interpolation_at_infty_perorb_closure_estimate} from above by $C_0|x_0|$. We start with
	\begin{equation}
		\left|\int_0^1\chi_0'\left\langle\left.\frac{\partial\bA^s_t}{\partial s}\right|_{s=\chi_0}x_\tau,x_\tau\right\rangle\bM^{\chi_0}_{1-\tau}x_\tau d\tau\right|\leq \int_0^1C_1\chi'\left(|x_0|^2\right)|x_0|^3d\tau=C_1\chi'(r_0)r_0^{3/2}
	\end{equation}
	where $C_1=\left\|\partial_s\bA\right\|_{L^\infty}$. We are led to impose the point-wise constraint
	\begin{equation*}
		\chi'(r)\leq\frac{C_0}{C_1r}
	\end{equation*}
	\begin{figure}[t]
		\centering
		\begin{overpic}[width=.8\textwidth]{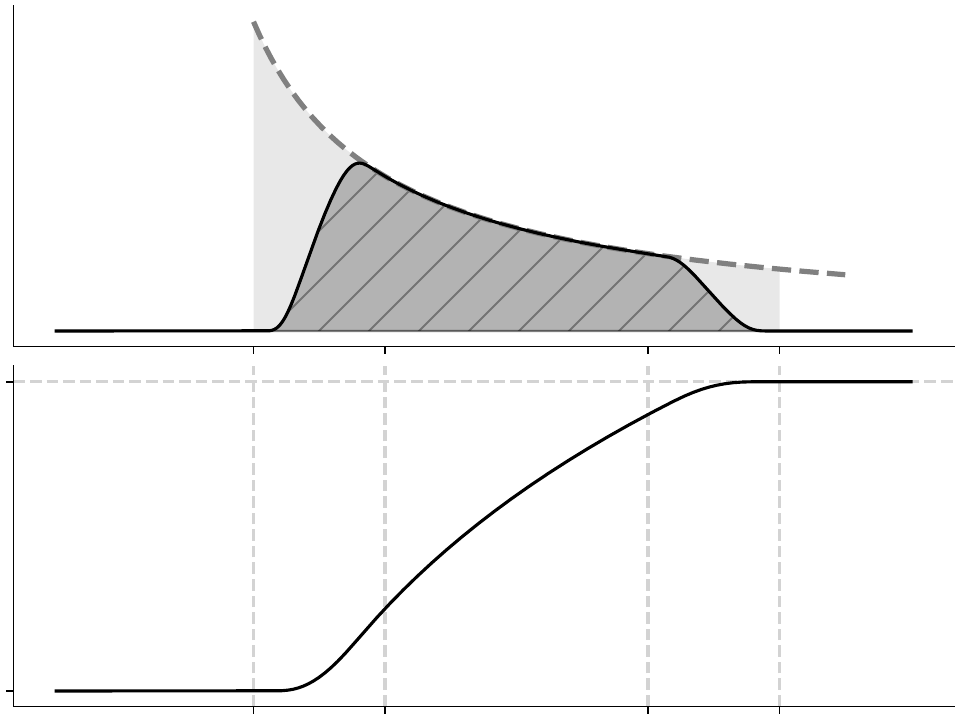}
			\put (25.5,-2.5) {$R_0$}
			\put (35,-2.5) {$R_0+\epsilon$}
			\put (63,-2.5) {$R_1-\epsilon$}
			\put (79,-2.5) {$R_1$}
			\put (-1.5,2) {$0$}
			\put (-1.5,34) {$1$}
			\put (30,70) {\color{darkgray}$\displaystyle\frac{C_0}{C_1r}$}
			\put (50,54) {$\chi'$}
			\put (50,25) {$\chi$}
		\end{overpic}
		\caption{Construction of the step function $\chi$. The smoothing is done in such a way that the light gray area contributes $\delta$ to the total area and the dark gray area contributes $1$ to the total area.}
		\label{fig:bumps}
	\end{figure}
	Observe that we must have $\int_\bR\chi'=1$, while $r^{-1}$ has diverging integral. Fix a small $\delta>0$. Given any fixed $R_0>0$, let $R_1>R_0$ be such that
	\begin{equation}\label{eq:def_of_last_interp_radius}
		\int_{R_0}^{R_1}\frac{C_0}{C_1r}dr=1+\delta\iff R_1=e^{(1+\delta)\frac{C_1}{C_0}}R_0
	\end{equation}
	For $\epsilon>0$ to be determined, define $\chi'$ to be a non-negative function such that $\chi'(r)=\frac{C_0}{C_1r}$ for $r\in[R_0+\epsilon,R_1-\epsilon]$ and $\chi'(r)=0$ for $r\notin[R_0,R_1]$ (see Figure \ref{fig:bumps}). We can assume that $\epsilon>0$ is chosen in such a way that $\int_\bR\chi'=1$. The constant $\epsilon$ is determined by $\delta$ and by how rapidly $\chi'$ interpolates between $0$ and $\frac{C_0}{C_1r}$. Imposing the boundary condition that $\chi(R_0)=0$ gives an unique primitive $\chi$ of our $\chi'$ which will have the sought-for properties.
\end{proof}
We gather the formulas for the two constants $C_0$ and $C_1$ which enter the definition of $R_1$, since their behaviour under iteration is important in the rest of the paper:
\begin{equation}\label{eq:interpolation_constants}
	C_0=\min_{s\in[0,1],\,z\in\bR^{2n}}\frac{\left|\phi^1_{\mathcal{Q}^s}z-z\right|}{|z|},\quad C_1=\left\|\partial_s\bA\right\|_{L^\infty\left([0,1]\times S^1,\Sym2n\right)}=\left\|\partial_s\nabla^2_z\mathcal Q\right\|_{L^\infty}.
\end{equation}
Recall that
\begin{equation}\label{eq:interp_cst_C0_est_below_reso_prox}
	C_0\geq\min_{s\in[0,1]}\nu_\infty(s)>0
\end{equation}
since the non-resonance condition is equivalent to the fact that
\begin{equation}\label{eq:reso_prox_cst_homotopy}
	\nu_\infty(s)=\left|\left(\phi^1_{\mathcal{Q}^s}-\bid\right)^{-1}\right|^{-1}>0\quad\forall s\in[0,1].
\end{equation}

\subsubsection{Truncation of the sub-quadratic term}
\label{ssub:truncation_of_the_sub_quadratic_term}

Here we explain how to truncate the sub-quadratic term of an asymptotically quadratic Hamiltonian, non-resonant and unitary at infinity.

\begin{prop}\label{prop:nonresonant_truncation}
	Let $F=P+f$ be an asymptotically quadratic Hamiltonian, non-resonant at infinity. Assume that $\phi^t_P\in\Un(n)$ for all $t\in[0,1]$.
	There exist $0<R_a<R_b$ and a Hamiltonian
	\begin{equation}\label{eq:truncated_hamilt}
		\widetilde{F}_t(z)=P_t(z)+\widetilde{f}_t(z)
	\end{equation}
	with the following properties:
	\begin{enumerate}
		\item $\widetilde F_t(z)=F_t(z)$ when $|z|<\sqrt{R_a}$ and $\widetilde f$ is compactly supported in a ball of radius $\sqrt{R_b}$. In particular $\widetilde F_t(z)=P_t(z)$ for all $|z|>\sqrt{R_b}$.
		\item $\Fix\phi^1_{\widetilde F}=\Fix\phi^1_F$.
	\end{enumerate}
\end{prop}
\begin{proof}
	Assume that the quadratic Hamiltonian at infinity of $F$ is $P_t(z)=\frac12\left\langle B_tz,z\right\rangle$.
	For some chosen $0<R_a<R_b$, let $\rho\colon\bR\to[0,1]$ be an arbitrary smooth function such that
	\begin{equation}\label{eq:trunclem_step_props}
		\rho(r)=1\quad \forall r\leq R_a,\quad \rho(r)=0\quad \forall r\geq R_b,\quad \rho'(r)\leq 0\quad \forall r.
	\end{equation}
	Set
	\begin{equation}
		\widetilde F_t(z)=P_t(z)+\rho\left(|z|^2\right)f_t(z),\quad\rho\left(|z|^2\right)f_t(z)=\widetilde f_t(z).
	\end{equation}
	Recall from lemma \ref{lem:1-per_orbits_of_nondeg_at_infty_asyquad_are_unif_Linfty_bounded} that the fixed points of $\phi^1_F$ are contained in a ball of radius $R_1>0$ given by equation \eqref{eq:fund_radius_unitary}. Since we don't want to destroy these fixed points, start by taking $\sqrt{R_a}>R_1$ and an arbitrary $R_b>R_a$. Notice that if $|z|\leq\sqrt{R_b}$ then $|\phi^t_{\tilde F}(z)|\leq\sqrt{R_b}$ for all $t$ because the flow of $P$ preserves the norm. It suffices to consider $\sqrt{R_a}\leq |z|\leq\sqrt{R_b}$, because for $|z|>\sqrt{R_b}$, $\widetilde F=P$ definitely does not have any 1-periodic orbit. Set $x_t=\phi^t_{\widetilde F}(z)$ and $y_t=\phi^t_F(z)$.
	\begin{equation}
		\begin{multlined}[b]
			\left|x_\tau-y_\tau\right|\leq\int_0^\tau\left|\nabla\widetilde F_t(x_t)-\nabla F_t(y_t)\right|dt=\\
			=\int_0^\tau\left|B_tx_t+\rho\left(\left|x_t\right|^2\right)\nabla f_t(x_t)+2\rho'\left(\left|x_t\right|^2\right)f_t(x_t)x_t-B_ty_t-\nabla f_t(y_t)\right|dt\\
				\leq \|B\|_{L^\infty}\int_0^\tau|x_t-y_t|dt+\int_0^\tau\left|\rho\left(\left|x_t\right|^2\right)\nabla f_t(x_t)-\nabla f_t(y_t)\right|dt+\\
			+2\int_0^\tau\left|\rho'\left(|x_t|^2\right)f_t(x_t)x_t\right|dt= I_0+I_1+I_2.
		\end{multlined}
	\end{equation}
	Now, since we know that $|x_t|\leq\sqrt{R_b}$, we can estimate
	\begin{equation}
		\begin{multlined}
			I_1=\int_{\left\{t:|x_t|<\sqrt{R_a}\right\}}\left|\nabla f_t(x_t)-\nabla f_t(y_t)\right|dt+\\
			+\int_{\left\{t:\sqrt{R_a}\leq|x_t|\leq\sqrt{R_b}\right\}}\left|\rho\left(\left|x_t\right|^2\right)\nabla f_t(x_t)-\nabla f_t(y_t)\right|dt\leq\\
			\leq\left\|\nabla^2f\right\|_{L^\infty\left(B_{\sqrt{R_a}}\right)}\int_0^\tau|x_t-y_t|dt+\\
			+\int_{\left\{t:\sqrt{R_a}\leq|x_t|\leq\sqrt{R_b}\right\}}\left|\rho\left(\left|x_t\right|^2\right)\nabla f_t(x_t)-\nabla f_t(y_t)\right|dt=I_{10}+I_{11}.
		\end{multlined}
	\end{equation}
	Continuing
	\begin{equation}
		\begin{multlined}[t]
			I_{11}=\int_{\left\{t:\sqrt{R_a}\leq|x_t|\leq\sqrt{R_b}\right\}}\left|\rho\left(\left|x_t\right|^2\right)\nabla f_t(x_t)-\nabla f_t(y_t)+\nabla f_t(x_t)-\nabla f_t(x_t)\right|dt\leq\\
			\leq \left\|\nabla^2f\right\|_{L^\infty\left(B_{\sqrt{R_b}}\setminus B_{\sqrt{R_a}}\right)}\int_0^\tau\left|x_t-y_t\right|dt+\\
			+\int_{\left\{t:\sqrt{R_a}\leq|x_t|\leq\sqrt{R_b}\right\}}\left[\rho\left(\left|x_t\right|^2\right)-1\right]\left|\nabla f_t(x_t)\right|dt\leq\\
			\leq \left\|\nabla^2f\right\|_{L^\infty\left(B_{\sqrt{R_b}}\setminus B_{\sqrt{R_a}}\right)}\int_0^\tau\left|x_t-y_t\right|dt+\tau\bar\sigma_1\left(\sqrt{R_a}\right)\sqrt{R_b}.
		\end{multlined}
	\end{equation}
	Now for $I_2$,
	\begin{equation}
		\begin{split}
			I_2&=2\int_{\left\{t:\sqrt{R_a}\leq|x_t|\leq\sqrt{R_b}\right\}}\left|\rho'\left(\left|x_t\right|^2\right)f_t\left(x_t\right)x_t\right|dt\leq\\
			&\leq2\left\|\rho'\right\|_{L^\infty}\int_{\left\{t:\sqrt{R_a}\leq|x_t|\leq\sqrt{R_b}\right\}}\sigma^f_0\left(|x_t|\right)|x_t|^3dt\leq\\
			&\leq2\tau\left\|\rho'\right\|_{L^\infty}\bar\sigma^f_0\left(\sqrt{R_a}\right)R_b^{\frac32}
		\end{split}
	\end{equation}
	where we recall that $\bar\sigma^f_0(R)$ is the smallest non-increasing majorant of $\sigma^f_0$. Putting it all together
	\begin{equation}
		\begin{split}
			\left|x_\tau-y_\tau\right|&\leq\left(\left\|A\right\|_{L^\infty}+\left\|\nabla^2f\right\|_{L^\infty\left(B_{\sqrt{R_a}}\right)}+\left\|\nabla^2f\right\|_{L^\infty\left(B_{\sqrt{R_b}}\setminus B_{\sqrt{R_a}}\right)}\right)\int_0^\tau\left|x_t-y_t\right|dt+\\
			&\qquad+\tau\bar\sigma^f_1\left(\sqrt{R_a}\right)\sqrt{R_b}+2\tau\bar\sigma^f_0\left(\sqrt{R_a}\right)R^{\frac32}_b\left\|\rho'\right\|_{L^\infty}\leq\\
			&\leq3C_2\int_0^\tau\left|x_t-y_t\right|dt+\tau\bar\sigma^f_1\left(\sqrt{R_a}\right)\sqrt{R_b}+2\tau\bar\sigma^f_0\left(\sqrt{R_a}\right)R^{\frac32}_b\left\|\rho'\right\|_{L^\infty}.
		\end{split}
	\end{equation}
	Grönwall's lemma \cite[Chapter 2, Lemma 2]{AbrahamMarsden1978_Foundations} gives
	\begin{equation}\label{eq:truncation_lemma_gronwall_estimate}
		\left|\phi^1_{\widetilde F}(z)-\phi^1_F(z)\right|\leq e^{3C_2}\left[\bar\sigma^f_1\left(\sqrt{R_a}\right)\sqrt{R_b}+2\bar\sigma^f_0\left(\sqrt{R_a}\right)R^{\frac32}_b\left\|\rho'\right\|_{L^\infty}\right].
	\end{equation}
	Recall the resonance proximity constant $\nu_\infty$ from definition \ref{def:resonance_proximity_cst} and the function $\nu(|z|)$ defined in equation \eqref{eq:1per_estimate_below}. Since $\phi^t_P$ is unitary, we have
	\begin{equation}\label{eq:truncation_lemma_below_estimate_cst_unitary}
		\nu\left(|z|\right)=\nu_\infty-e^{C_2}\bar\sigma^f_1\left(|z|\right).
	\end{equation}
	Assuming that $\sqrt{R_a}\leq |z|\leq\sqrt{R_b}$, we can use \eqref{eq:truncation_lemma_gronwall_estimate} to estimate
	\begin{equation}
		\begin{split}
			&\left|\phi^1_{\widetilde F}(z)-z\right|\geq\left|\phi^1_F(z)-z\right|-\left|\phi^1_{\widetilde F}(z)-\phi^1_F(z)\right|\geq\\
			&\geq \nu\left(|z|\right)|z|-e^{3C_2}\left[\bar\sigma^f_1\left(\sqrt{R_a}\right)\sqrt{R_b}+2\bar\sigma^f_0\left(\sqrt{R_a}\right)R^{\frac32}_b\left\|\rho'\right\|_{L^\infty}\right].
		\end{split}
	\end{equation}
	Plugging in \eqref{eq:truncation_lemma_below_estimate_cst_unitary}, using $|z|\geq\sqrt{R_a}$ and that $\nu(|z|)$ is a non-decreasing function,
	\begin{equation}
		\begin{multlined}[c]
			\left|\phi^1_{\widetilde F}(z)-z\right|\geq \nu_\infty\sqrt{R_a}-e^{C_2}\bar\sigma^f_1\left(\sqrt{R_a}\right)\sqrt{R_a}+\\
			-e^{3C_2}\left[\bar\sigma^f_1\left(\sqrt{R_a}\right)\sqrt{R_b}+2\bar\sigma^f_0\left(\sqrt{R_a}\right)R^{\frac32}_b\left\|\rho'\right\|_{L^\infty}\right]=\\
			\geq \nu_\infty\sqrt{R_a}-e^{3C_2}\sqrt{R_b}\left[\left(1+e^{-2C_2}\frac{\sqrt{R_a}}{\sqrt{R_b}}\right)\bar\sigma^f_1\left(\sqrt{R_a}\right)+2R_b\left\|\rho'\right\|_{L^\infty}\bar\sigma^f_0\left(\sqrt{R_a}\right)\right].
		\end{multlined}
	\end{equation}
	We can rule out fixed points in the truncation annulus by imposing that this is positive. We are thus led to the inequality
	\begin{equation}\label{eq:truncation_fundamental_estimate}
		\frac{\sqrt{R_b}}{\sqrt{R_a}}\left[\left(1+e^{-2C_2}\frac{\sqrt{R_a}}{\sqrt{R_b}}\right)\bar\sigma^f_1\left(\sqrt{R_a}\right)+2R_b\left\|\rho'\right\|_{L^\infty}\bar\sigma^f_0\left(\sqrt{R_a}\right)\right]<e^{-3C_2}\nu_\infty.
	\end{equation}
	We now aim to find $R_a,R_b$ and $\rho$ so that the inequality can be achieved for all large enough radii. Define the piece-wise linear function
	\begin{equation}\label{eq:truncation_piecewiserho}
		\tilde\rho(r)=
		\begin{cases}
			1, & r\leq R_a,\\
			\frac{r-R_b}{R_a-R_b}, & R_a\leq r\leq R_b,\\
			0, & r\geq R_b.
		\end{cases}
	\end{equation}
	We can smooth $\tilde\rho$ out to a smooth function $\rho$ that satisfies the properties of \eqref{eq:trunclem_step_props} and additionally
	\begin{equation}
		\left|\rho'(r)\right|\leq\frac{c_\rho}{R_b-R_a}\quad\forall r\in[0,\infty).
	\end{equation}
	where $c_\rho>1$ can be chosen arbitrarily close to $1$, depending on how tightly we are smoothing the piecewise-linear function. A convenient choice is $c_\rho=\sqrt{2}$.
	It is useful to eliminate a parameter by setting $R_b=2R_a$ so that the norm of $\rho'$ decays with $R_a$:
	\begin{equation}
		\left\|\rho'\right\|_{L^\infty}\leq\frac{c_\rho}{R_a}.
	\end{equation}
	With these choices the left-hand side of \eqref{eq:truncation_fundamental_estimate} becomes
	\begin{equation}\label{eq:def_of_sigma01}
		\begin{split}
			&\sqrt2\left[\left(1+\frac{e^{-2C_2}}{\sqrt 2}\right)\bar\sigma^f_1\left(\sqrt{R_a}\right)+4 R_a\left\|\rho'\right\|_{L^\infty}\bar\sigma^f_0\left(\sqrt{R_a}\right)\right]\leq\\
			&\leq\left(\sqrt2+e^{-2C_2}\right)\bar\sigma^f_1\left(\sqrt{R_a}\right)+4\sqrt2 c_\rho\bar\sigma^f_0\left(\sqrt{R_a}\right)=\bar\sigma^f_{01}\left(\sqrt{R_a}\right).
		\end{split}
	\end{equation}
	Using that $F$ is asymptotically quadratic, more precisely the estimates in equation \eqref{eq:decay_moduli_decayish}, we see that $\bar\sigma^f_{01}(R)\to 0$ as $R\to\infty$. Therefore the strict inequality \eqref{eq:truncation_fundamental_estimate} can be reached for $R_a$ large enough. More precisely, set
	\begin{equation}\label{eq:def_of_tildeR_1}
		\tilde{R}_1=\max\left\{R:\bar\sigma^f_{01}\left(R\right)\geq e^{-3C_2}\nu_\infty\right\}.
	\end{equation}
	Then as soon as $\sqrt{R_a}>\max\{R_1,\tilde{R}_1\}$, $\phi^1_{\widetilde F}$ has no fixed points in the annular region $\{\sqrt{R_a}\leq |z|\leq\sqrt{2R_a}\}$, which in turn means that the fixed points of $\phi^1_{\widetilde F}$ are exactly the same as the fixed points of $\phi^1_F$.
\end{proof}
\begin{rmk}
	By scrutinizing the definition of $R_1$ \eqref{eq:fund_radius_unitary} and of $\tilde{R}_1$ \eqref{eq:def_of_tildeR_1}, it's easy to see that typically $\tilde{R}_1\gg R_1$.
\end{rmk}

\section{The Poincaré-Birkhoff theorem for rapidly asymptotically unitary Hamiltonian systems}
\label{sec:the_poincare_birkhoff_theorem_for_rapidly_asymptotically_unitary_hamiltonian_systems}

The proof of the Poincaré-Birkhoff theorem for rapidly asymptotically unitary Hamiltonian diffeomorphisms is roughly divided in two parts. First, there is a preparatory step, which involves applying the constructions of section \ref{sec:modifying_linear_and_asymptotically_linear_hamiltonian_systems}. The output of this preparatory step are two sequences of Hamiltonians and a special sequence of prime iterates, whose r\^ole we will explain below, and whose properties are summarized in proposition \ref{prop:auxiliary_prop}

The results of proposition \ref{prop:auxiliary_prop} are then used to prove the Poincaré-Birkhoff theorem \ref{thm:PB_thm}, adapting an argument of Gürel \cite{Gurel_PJM2014}. The argument relies on an asymptotic study of continuation maps between the filtered Floer homologies of the Hamiltonians in these two sequences. We point the unfamiliar reader to section \ref{sec:floer_homology_of_asymptotically_linear_hamiltonian_systems} for remarks on the construction of Floer homology for ALHDs in its global, filtered and local version, and references to literature on the topic of Floer homology.

The presentation of the proof found here is reversed, first stating proposition \ref{prop:auxiliary_prop} and immediately using it to prove the theorem, and then proving the proposition in the following section \ref{sec:proof_of_the_auxiliary_proposition}. The rationale for this inversion is to show as quickly as possible how the constructions of the proposition are used in the proof of the Poincaré-Birkhoff theorem. Nevertheless, let us introduce and give some motivation to the objects appearing in the auxiliary proposition.

Let $\upphi$ be an asymptotically unitary Hamiltonian diffeomorphism with linear map at infinity $\upphi_\infty\in\Un(n)$.
In the proposition below we introduce a sequence of prime numbers $(p_j)_{j\in\bN}$ and, for a fixed $m\in\bN$, two sequences of Hamiltonians $(G_j)_{j\in\bN}$ and $(F_{j,m})_{j\in\bN}$ which we will then use in the proof of the Poincaré-Birkhoff theorem.

The sequence of prime numbers depends only on $\upphi_\infty$, and it has two crucial properties: the map $\phi^{p_j}_\infty$ is uniformly far from having a resonance, and the gaps $p_{j+m}-p_j$ for fixed $m$ grow much slower than $p_j$ as $j\to\infty$. These two properties are found by applying Vinogradov's equidistribution theorem on prime multiples of irrational numbers mod 1 \cite[Chapter XI]{Vinogradov1954_TrigSums}, see proposition \ref{prop:uniform_nonresonant_iterations_of_linear_symplectic_map}.

The sequences of Hamiltonians are constructed out of a chosen generating Hamiltonian $H$ for the asymptotically unitary Hamiltonian diffeomorphism $\upphi$. The Hamiltonian $G_j$ is related to the $p_j$th iterate of $H$ while $F_{j,m}$ to the $p_{j+m}$th, but cannot be chosen to be simply the iterates of $H$. The reason is that in the proof of the Poincaré-Birkhoff theorem, we must be able to find continuation isomorphisms between the filtered Floer homologies of Hamiltonians generating these iterates, with an estimate on their action shift in terms of the iterates. There are no continuation isomorphisms between the filtered Floer homologies of $H^{\times p_j}$ and $H^{\times p_{j+m}}$ in general. Therefore, one must suitably modify the iterated Hamiltonians at infinity, as we explain next.

The Hamiltonian $G_j$ is obtained by truncation of $0\wedge H^{\times p_j}$. The Hamiltonian $F_{j,m}$ is obtained by truncation of $H^{p_{j+m}\ominus p_j}$ and then by interpolating the linear system at infinity of this truncated Hamiltonian to be the same as the one for $G_j$. This allows continuation isomorphisms to be defined, as the systems at infinity of the two Hamiltonians are the same.
The resulting Hamiltonians satisfy, among other properties, that $\Fix\phi^1_{G_j}=\Fix\upphi^{p_j}$ while $\Fix\phi^1_{F_{j,m}}=\Fix\upphi^{p_{j+m}}$, and $F_{j,m}-G_j$ is a function of support contained in a large ball $B_{j,m}$, with explicit estimates on both the radius of this ball and the $L^\infty$-norm of this function.

In the proof of the Poincaré-Birkhoff theorem, we must understand the asymptotics of these estimates as $j\to\infty$, because the action shift of the continuation morphism between $F_{j,m}$ and $G_j$ depends on these quantities. This boils down to estimating the radii of truncation and interpolation chosen to construct $F_{j,m}$ and $G_j$. In turn, these radii are determined by how rapidly the tails of the sub-quadratic part of the Hamiltonian $H$ behave. In order to gain control on the radii for increasingly large iterates, we must introduce a restricted class of asymptotically quadratic Hamiltonians, which have fast decay of the sub-quadratic part.

\subsection{Rapidly asymptotically quadratic Hamiltonians}
\label{sub:rapidly_asymptotically_quadratic_hamiltonians}

Let $H=Q+h$ be an asymptotically quadratic Hamiltonian and $\sigma^h_0,\sigma^h_1$ the tail functions defined in \eqref{eq:def_of_decay_moduli}.
\begin{defn}\label{def:rapidly_asy_quad}
	We say that $H$ is \emph{rapidly} asymptotically quadratic if
	\begin{equation}\label{eq:def_of_rapidly_asy_quad}
		\sigma^h_1(R)=o\left(e^{-R^2}\right),\quad\sigma^h_0(R)=o\left(e^{-R^2}\right)\quad\text{as}\quad R\to\infty.
	\end{equation}
	We say that an ALHD $\upphi$ is \emph{rapidly} asymptotically linear if it can be generated by a rapidly asymptotically quadratic Hamiltonian.
\end{defn}
\begin{rmk}
	The sub-quadratic term of a rapidly asymptotically quadratic Hamiltonian decays to zero super-exponentially. Nevertheless, it doesn't have to have compact support, so for example it can be an analytic function with rapid enough decay.
\end{rmk}
The exact point where such condition enters is in lemma \ref{lem:estimate_truncation_radius_k}, where it is shown that with this fast type of decay, the truncation radius of $H^{k\ominus l}$ grows much slower than $\sqrt k$.

\subsection{The auxiliary proposition}
\label{sub:the_auxiliary_proposition}

If $z_0\in\Fix\phi^1_H$, denote by $\HFloc_*\left(H,z_0\right)$ the local Floer homology of the fixed point with respect to the generating Hamiltonian $H$.
The reader unfamiliar with this object may consult section \ref{sub:local_floer_homology} for some of its properties and some references to the literature.
\begin{prop}\label{prop:auxiliary_prop}
	Let $\upphi$ be a rapidly asymptotically linear Hamiltonian diffeomorphism whose linear map at infinity $\upphi_\infty$ is non-degenerate and unitary.
	\begin{enumerate}[label=(\alph*)]
		\item\label{item:mainprop_unif_nonres_prime_iterates} There exists an increasing sequence of prime numbers $(p_j)_{j\in\bN}$ and a constant $\nu_\infty>0$ such that
			\begin{enumerate}[label=(\arabic*)]
				\item For every $j$, it holds that
						\begin{equation}\label{eq:mainprop_ndg_at_infty_unif_inv_est}
							\left|\left(\upphi^{p_j}_\infty-\bid\right)^{-1}\right|^{-1}\geq \nu_\infty.
						\end{equation}
						In particular $\det\left(\upphi^{p_j}_\infty-\bid\right)\neq 0$ for all $j$.\label{item:mainprop_ndg_at_infty}
				\item If $m\in\bN$ is fixed, $p_{j+m}-p_j=o(p_j)$. \label{item:mainprop_unif_nonres_gaps}
			\end{enumerate}
		\item\label{item:mainprop_H_and_Gj} There exists a rapidly asymptotically quadratic Hamiltonian $H$ with $\upphi=\phi^1_H$, a sequence of asymptotically quadratic Hamiltonians $\left(G_j\right)_{j\in\bN}$, and for every fixed $m\in\bN$ a sequence of asymptotically quadratic Hamiltonians $\left(F_{j,m}\right)_{j\in\bN}$ such that, denoting
	\begin{equation}\label{eq:mainprop_def_of_sigma_jm}
		\sigma_{j,m}=\ind_\infty H^{\times p_{j+m}}-\ind_\infty H^{\times p_j},
	\end{equation}
	we have the following properties:
	\begin{enumerate}[label=(\arabic*)]
		\setcounter{enumii}{2}
		\item $\left|\left(p_{j+m}-p_j\right)\overline\ind_\infty H-\sigma_{j,m}\right|\leq 2n$. \label{item:mainprop_index_shift_estimate}
		\item It holds that $\Fix\phi^1_{F_{j,m}}=\Fix\upphi^{p_{j+m}}$. Moreover, for any $z\in\Fix\upphi\subset\Fix\phi^1_{F_{j,m}}$, we have
			\begin{equation}\label{eq:Fjm_action_meanCZ_CZ_iterated_fixed_pts}
				\begin{split}
					&\cA_{F_{j,m}}(z)=p_{j+m}\cA_{H}(z),\\
					&\CZ\left(z,F_{j,m}\right)=\CZ\left(z,H^{\times p_{j+m}}\right)-\sigma_{j,m}
				\end{split}
			\end{equation}
			and, if $z\in\Fix\phi^1_{F_{j,m}}$ is isolated as a fixed point of $\phi^1_{F_{j,m}}$, then
			\begin{equation}\label{eq:Fjm_iterated_fixed_pt_HFloc_shifted}
					\HFloc_*\left(F_{j,m},z\right)\cong\HFloc_{*+\sigma_{j,m}}\left(H^{\times p_{j+m}},z\right).
			\end{equation} \label{item:mainprop_Fjm_dynquants}
		\item It holds that $\Fix\phi^1_{G_j}=\Fix\upphi^{p_j}$. Moreover, for any $z\in\Fix\upphi\subset\Fix\phi^1_{G_j}$, we have
			\begin{equation}\label{eq:Gj_fixed_pts_dynamical_quantities}
				\begin{split}
					&\cA_{G_j}(z)=p_j\cA_H(z),\\
					&\meanCZ\left(z,G_j\right)=p_j\meanCZ\left(z,H\right),\\
					&\CZ\left(z,G_j\right)=\CZ(z,H^{\times p_j}),\\
				\end{split}
			\end{equation}
			and, if $z\in\Fix\phi^1_{G_j}$ is isolated as a fixed point of $\phi^1_{G_j}$, then
			\begin{equation}\label{eq:Gj_iterated_fixed_pt_HFloc_preserved}
					\HFloc_*\left(G_j,z\right)\cong\HFloc_{*}\left(H^{\times p_{j}},z\right).
			\end{equation} \label{item:mainprop_Gj_dynquants}
		\item $G_j$ and $F_{j,m}$ are equal outside a compact set $B_{j,m}$ depending on $j,m$. We have the estimate
			\begin{equation}\label{eq:action_shift_estimate}
				\left\|G_j-F_{j,m}\right\|_{L^\infty\left(S^1\times B_{j,m}\right)}=o\left(p_{j+m}\right)\text{ as } j\to\infty.
			\end{equation} \label{item:mainprop_action_shift_estimate}
		\end{enumerate}
	\end{enumerate}
\end{prop}
The proof of this proposition is deferred to section \ref{sec:proof_of_the_auxiliary_proposition}.

\subsection{Proof of the Poincaré-Birkhoff theorem}
\label{sub:proof_of_the_poincare_birkhoff_theorem}

In this section we prove the Poincaré-Birkhoff theorem for rapidly asymptotically unitary Hamiltonian diffeomorphisms using proposition \ref{prop:auxiliary_prop}.

Let $\upphi$ be an ALHD. Recall that a fixed point $z_0\in\Fix\upphi$ is said to be \emph{twist}, when $\meanCZ(z_0,H)\neq\overline\ind_\infty H$ for some generating asymptotically quadratic Hamiltonian $H$, and that if it is isolated, it is said to be homologically visible when $\HFloc_*(H,z_0)\neq\{0\}$ for some generating asymptotically quadratic Hamiltonian $H$. Both these conditions do not depend on the choice of generating Hamiltonian.

\setcounter{thm}{0}
\begin{thm}\label{thm:PB_thm}
	Let $\upphi$ be a rapidly asymptotically linear Hamiltonian diffeomorphism with non-degenerate and unitary linear map at infinity $\upphi_\infty\in\Un(n)\subset\Sp(2n)$. Assume that $\upphi$ admits an isolated homologically visible twist fixed point $z_0\in\Fix\upphi$. Then $\upphi$ has infinitely many fixed points or infinitely many periodic points with increasing primitive period.
\end{thm}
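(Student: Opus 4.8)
\emph{Proof plan.} I would argue by contradiction, using Proposition~\ref{prop:main_prop} as a black box. Suppose $\phi$ has only finitely many fixed points and the primitive periods of its periodic points are bounded; being positive integers, these then form a finite set, so $\phi$ satisfies the hypotheses of Proposition~\ref{prop:main_prop}, and I fix the data it provides: the primes $(p_j)$, the Hamiltonian $H$ generating $\phi$, the iterating Hamiltonians $(G_j)$, and, for each $m\in\bN$, the integers $(\sigma_{j,m})$ and the interpolated Hamiltonians $(F_{j,m})$. Discarding finitely many terms of $(p_j)$, which preserves all the listed properties, I may assume moreover that every $p_j$ is an admissible iteration for every fixed point of $\phi$, so that (by the standard behaviour of local Floer homology under admissible iterations) $\HFloc_*(G_j,z)\cong\HFloc_*(H^{\times p_j},z)$ is, up to a grading shift, independent of $j$, and non-zero for $z=z_0$ by homological visibility. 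Write $d_0=\meanCZ(z_0,H)$, $c_0=\cA_H(z_0)$, and $g_j=\CZ(z_0,H^{\times p_j})=p_jd_0+O(1)$ for the grading at which $\HFloc_*(H^{\times p_j},z_0)$ is concentrated up to $O(1)$. The second external ingredient is that the Floer homology of an ALHD non-degenerate at infinity is one-dimensional and determined by its index at infinity; applied to $G_j$, whose Floer-theoretic data is that of $H^{\times p_j}$, this gives $\HF_*(G_j)$ one-dimensional, supported in a single grading which one computes to be $p_j\overline{\ind_\infty}H+O(1)$.

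The key structural point is that, since $z_0$ is twist, $d_0\neq\overline{\ind_\infty}H$, so for $j$ large the grading $g_j$ where the local homology of $z_0$ sits is separated from the grading where the total homology of $G_j$ sits, the gap growing like $p_j$. Grouping the finitely many fixed points by mean index, for $j$ large the generators of $\CF(G_j)$ lying in a fixed grading window around $g_j$ form exactly the subcomplex $B_j:=\bigoplus_{z}\CFloc(G_j,z)$, the sum over the finitely many $z\in\Fix\phi$ with $\meanCZ(z,H)=d_0$; using that the quotient complex $\CF(G_j)/B_j$ has no generators in that window and that $\HF_*(G_j)$ is concentrated elsewhere, one checks from the long exact sequence of $B_j\subseteq\CF(G_j)$ that $B_j$ is acyclic. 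Thus $B_j$ is a non-zero acyclic subcomplex of $\CF(G_j)$, filtered by the action, whose generators sit at the filtration values $p_j\cA_H(z)$; in particular the endpoints of the bars of its persistence module lie among these finitely many values.

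To produce a contradiction I use the interpolated $F_{j,m}$ for a suitably large but fixed $m$. Because $F_{j,m}$ generates $\phi^{p_{j+m}}$ while lying within $\|G_j-F_{j,m}\|_{L^\infty}=O(p_{j+m}-p_j)$ of $G_j$, the continuation maps between $G_j$ and $F_{j,m}$ are defined in both directions and, being grading-preserving, give in each grading an $O(p_{j+m}-p_j)$-interleaving of filtered Floer homologies. On the other hand, by Proposition~\ref{prop:main_prop}(3)--(6) the Floer data of $F_{j,m}$ is that of $H^{\times p_{j+m}}$ with gradings shifted down by $\sigma_{j,m}=(p_{j+m}-p_j)\overline{\ind_\infty}H+O(1)$ and with unchanged action coordinates $p_{j+m}\cA_H(\cdot)$; tracking a fixed point $z$ through this shift (and using $p_{j+m}/p_j\to 1$, which follows from $p_{j+m}-p_j=o(p_j)$), one finds that $\CFloc(F_{j,m},z)$ reaches the grading window around $g_j$ only if $\meanCZ(z,H)=d_0$, in which case its grading is displaced from $g_j$ by $(p_{j+m}-p_j)(d_0-\overline{\ind_\infty}H)$, of absolute value at least $m\,|d_0-\overline{\ind_\infty}H|$; choosing $m$ large enough that this exceeds the widths of all the relevant local-homology windows, $\CF(F_{j,m})$ has \emph{no} generators in that window. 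Hence the persistence module of $B_j$ is $O(p_{j+m}-p_j)$-interleaved with the zero module, so every bar of $B_j$ has length $O(p_{j+m}-p_j)=o(p_j)$; since the bar endpoints lie among the finitely many values $p_j\cA_H(z)$, for $j$ large every bar stays within a single action level. A standard crossing-energy argument -- a Floer cylinder between distinct fixed points at the same action would have zero energy, hence be constant -- then forces $B_j$ to split, as a filtered complex, along fixed points, so that $\HFloc_*(G_j,z)\cong\HFloc_*(H^{\times p_j},z)$ vanishes for every $z$ with $\meanCZ(z,H)=d_0$. This contradicts $\HFloc_*(H^{\times p_j},z_0)\neq0$, and the contradiction proves the theorem.

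The step I expect to be most delicate -- within this section; the genuine work is Proposition~\ref{prop:main_prop} itself -- is the quantitative alignment in the last paragraph: one must keep the continuation action-distortion $O(p_{j+m}-p_j)$ small against the $\Theta(p_j)$-spacing of the action levels $p_j\cA_H(z)$ (this is what the gap estimate (2) provides), keep $\sigma_{j,m}$ within $O(1)$ of $(p_{j+m}-p_j)\overline{\ind_\infty}H$ (estimate (4)), and choose $m$ large enough to push the displaced gradings out of the window without disturbing the first two points. The reason the machinery of Section~\ref{sec:main_constructions_re_indexing_and_interpolations_at_infinity} is indispensable is exactly that a naive comparison of iterates fails here: continuation maps between different iterates of $\phi$ need not exist, since the iterates lie in different path-components at infinity (hence re-indexing), and even when defined they carry no usable action estimate, since Hamiltonians with the same index but different quadratic forms at infinity are at infinite uniform distance (hence interpolation) -- it is the existence of $F_{j,m}$ at finite, controlled distance from $G_j$ that makes the localization at the data of $z_0$ possible.
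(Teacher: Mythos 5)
Your plan is correct in substance and follows the same strategy as the paper's proof: the same contradiction setup, the same use of Proposition~\ref{prop:main_prop} as a black box, the same two-case degree analysis driven by the twist condition (your criteria for when a generator of $\CF(F_{j,m})$ can enter the degree window around $g_j$ are exactly the inequalities \eqref{eq:degree_calc_case_CZzeqCZz0} and \eqref{eq:degree_calc_case_CZzneqCZz0_1}), and the same back-and-forth continuation between $G_j$ and $F_{j,m}$, which in the paper appears as Lemma~\ref{lem:contin_forth_and_back_factors_IQ} and in your write-up as a $C_{j,m}$-interleaving -- these are the same statement. The packaging is dual: the paper localizes in \emph{action} around $p_{j+m}a_0$ inside $F_{j,m}$ and shows that the non-zero inclusion-quotient morphism factors through groups of $G_j$ that vanish in the relevant degrees, whereas you localize in \emph{degree} around $p_jd_0$ inside $G_j$ and show that the resulting persistence module is simultaneously non-trivial (it contains $\HFloc(G_j,z_0)$ at action $p_ja_0$) and trivial (interleaved with zero). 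Either direction works. Your detour through the total homology $\HF_*(G_j)$ and the acyclicity of $B_j$ is harmless but redundant: the interleaving with the zero module already bounds the length of all bars, including would-be infinite ones.

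The one step that does not close as written is the final one. ``A Floer cylinder between distinct fixed points at the same action has zero energy, hence is constant'' is only a heuristic here: $\CF(G_j)$ is built from a small non-degenerate perturbation, so the relevant action values are only approximately equal and the honest statement requires a crossing-energy lower bound uniform in the perturbation. More seriously, even granting that, it only excludes cross-differentials between fixed points at the \emph{same} action level; differentials from a higher to a strictly lower action level are perfectly consistent with positivity of energy, so the claimed splitting of $B_j$ into the local complexes of the individual fixed points does not follow, and with it neither does the conclusion that each $\HFloc(G_j,z)$ vanishes. Fortunately you do not need the splitting. From your bar-length bound $O(p_{j+m}-p_j)=o(p_j)$ and the fact that the bar endpoints cluster around the finitely many values $p_j\cA_H(z)$, whose distinct values are $\Theta(p_j)$-separated, every bar of $B_j$ has both endpoints near a single action value, and hence $\HF^{\left(p_j(a_0-\epsilon),\,p_j(a_0+\epsilon)\right]}_s(B_j)=\{0\}$ for suitable $\epsilon>0$ and all $s$. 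On the other hand, Lemma~\ref{lem:local_to_nbh_HF} restricted to the degree summand $B_j$ identifies this group with a direct sum containing $\HFloc_s(G_j,z_0)\cong\HFloc_s(H^{\times p_j},z_0)\neq\{0\}$, which is the desired contradiction. With that replacement your plan is a complete proof, and the quantitative alignment you flag as delicate is handled exactly as in the paper.
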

\begin{proof}
	The strategy of the proof extends Gürel's \cite{Gurel_PJM2014}. Assume by contradiction that $\upphi$ has finitely many fixed points and finitely many integers appear as primitive periods of periodic points of $\upphi$. Since $\upphi$ has finitely many fixed points, the action value $a_0=\cA_H(z_0)$ of the 1-periodic orbit corresponding to the fixed point $z_0$ is isolated. Therefore there exists an $\epsilon>0$ such that $(a_0-\epsilon,a_0+\epsilon]$ contains only $a_0$ as critical value of the action functional.
	
	Now, we apply proposition \ref{prop:auxiliary_prop} to $\upphi$. We obtain a sequence of prime numbers $\left(p_j\right)_{j\in\bN}$, a sequence of Hamiltonians $\left(G_j\right)_j\in\bN$, and for $m\in\bN$ fixed arbitrarily a sequence of Hamiltonians $\left(F_{j,m}\right)_{j\in\bN}$. Recall from point \ref{item:mainprop_H_and_Gj} of proposition \ref{prop:auxiliary_prop} that the fixed points of $\phi^1_{F_{j,m}}$ are exactly all the $p_{j+m}$-periodic points of $\upphi$, and the fixed points of $\phi^1_{G_j}$ are exactly all the $p_j$-periodic points of $\upphi$. We can assume that $p_0$ is larger than the largest primitive period of any periodic point of $\phi$. It follows that $p_j$-periodic points of $\upphi$ are iterated fixed points of $\upphi$ for all $j$. In particular, we have  $\Fix\phi^1_{F_{j,m}}=\Fix\upphi$ and also $\Fix\phi^1_{G_j}=\Fix\upphi$.

	Since there are only finitely many fixed points, the critical value $p_{j+m}a_0$ of $\cA_{F_{j,m}}$ is isolated. In fact, if $\bS\left(F_{j,m}\right)\subset\bR$ denotes the set of critical values of $\cA_{F_{j,m}}$,
	\begin{equation}
		\bS\left(F_{j,m}\right)\cap\left(p_{j+m}\left(a_0-\epsilon\right),p_{j+m}\left(a_0+\epsilon\right)\right]=\left\{p_{j+m}a_0\right\}.
	\end{equation}

	Now we consider the filtered Floer homologies (see section \ref{sub:filtered_floer_homology}) of the Hamiltonians $F_{j,m}$ and $G_j$. The plan is to first analyze the filtered Floer homology of $F_{j,m}$ in action windows around the critical value $p_{j+m}a_0$ and identify action windows for which a certain inclusion-quotient morphism (see section \ref{ssub:inclusion_quotient_morphism} for the definition) is non-vanishing. Then we study the continuation morphism to the filtered Floer homology of $G_j$, in particular, the shift on the action filtration it induces (see section \ref{sub:action_shift_of_continuation_morphisms}, equations \eqref{eq:def_of_action_shift_constant} and \eqref{eq:contin_with_action_shift}).

	Set $C_{j,m}=\left\|F_{j,m}-G_j\right\|_{L^\infty}$. Since $C_{j,m}=o(p_{j+m})$ by point \ref{item:mainprop_action_shift_estimate} of proposition \ref{prop:auxiliary_prop}, we know that for any $\delta>0$ there exists a $j_0>0$ such that $\delta p_{j+m}>\delta p_j>6C_{j,m}$ for every $j>j_0$. So take $\delta=\epsilon$ as above and set
	\begin{equation}\label{eq:special_action_window}
		I=\left(p_{j+m}\left(a_0-\frac{\epsilon}{3}\right),p_{j+m}\left(a_0+\frac{\epsilon}{3}\right)\right]
	\end{equation}
	\begin{figure}[t]
		\centering
		\begin{overpic}[width=.85\textwidth]{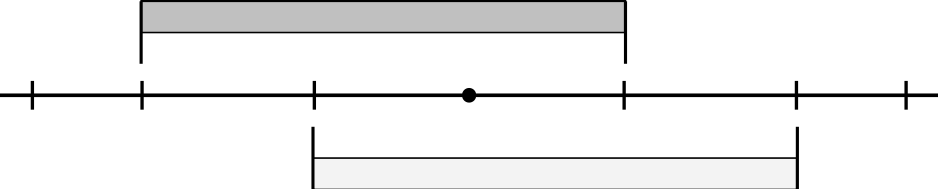}
			\put (-5,6) {$p_{j+m}(a_0-\epsilon)$}
			\put (14,6) {$a_-$}
			\put (25,13) {$a_-+2C_{j,m}$}
			\put (45,6) {$p_{j+m}a_0$}
			\put (65,6) {$a_+$}
			\put (78,13) {$a_++2C_{j,m}$}
			\put (90,5) {$p_{j+m}\left(a_0+\epsilon\right)$}
			\put (40,17.5) {$I$}
			\put (55,.5) {$I+2C_{j,m}$}
		\end{overpic}
		\caption{Special action windows. Here $a_{\pm}=p_{j+m}\left(a_0\pm\frac{\epsilon}{3}\right)$}
		\label{fig:action_windows}
	\end{figure}
	Then for any $j>j_0$ we have (see figure \ref{fig:action_windows})
	\begin{equation}\label{eq:special_action_window_is_good}
		\begin{split}
			I\cup\left(I+C_{j,m}\right)\cup\left(I+2C_{j,m}\right)\subset\left(p_{j+m}(a_0-\epsilon),p_{j+m}(a_0+\epsilon)\right]\\
			p_{j+m}a_0\in I\cap\left(I+C_{j,m}\right)\cap\left(I+2C_{j,m}\right)
		\end{split}
	\end{equation}
	We can use local Floer homology to compute the filtered Floer homology in these action windows. Indeed, since $p_{j+m}a_0$ is an isolated critical value and the $1$-periodic orbits of $F_{j,m}$ are finite in number, we can use lemma \ref{lem:local_to_nbh_HF} for $J=I$ or $J=I+2C_{j,m}$ to conclude that
	\begin{equation}\label{eq:calc_of_HFFjm_around_J}
			\HF^J_*\left(F_{j,m}\right)\cong\bigoplus\left\{\HFloc_*\left(F_{j,m},z\right):z\in\Fix\phi^1_{F_{j,m}}=\Fix\phi,\ \cA_{F_{j,m}}(z)=p_{j+m}a_0\right\}.
	\end{equation}
	
	We want to find non-zero classes in this filtered homology. To do this, we will use a theorem of Ginzburg and Gürel \cite[Theorem 1.1]{GinzburgGuerel2010_LFHAG} on the iterated local Floer homology to show that the local Floer homology of $z_0$ contributes non-trivially to the direct sum \eqref{eq:calc_of_HFFjm_around_J}. Here are the details.
	
	We denote the set of degrees in which the local Floer homology is not trivial by $\deg\supp\HFloc_*$ (see equation \eqref{eq:def_of_degsupp} in section \ref{sub:local_floer_homology}).
	As stated in lemma \ref{lem:degsupp_of_HFloc}, $\deg\supp\HFloc_*$ is contained in an interval of length $2n$ around the mean Conley-Zehnder index of the fixed point. This implies that as we take $j>j_0$ large enough, the supports in degree of the local Floer homology summands corresponding to fixed points with different mean Conley-Zehnder index appearing in \eqref{eq:calc_of_HFFjm_around_J} become disjoint. This lets us conclude that for every $m$ and all $j$ large enough, for $J=I$ or $J=I+2C_{j,m}$,
	\begin{equation}\label{eq:calc_of_HFFjm_around_I_degreeloc}
		\begin{split}
			\HF^J_s\left(F_{j,m}\right)=\bigoplus\left\{\HFloc_s\left(F_{j,m},z\right):
				\begin{split}
					&z\in\Fix\upphi,\\ &\cA_{F_{j,m}}(z)=p_{j+m}a_0,\\ &\meanCZ\left(z,H\right)=\meanCZ\left(z_0,H\right)
				\end{split}\right\}
		\end{split}
	\end{equation}
	for all $s\in\deg\supp\HFloc\left(F_{j,m},z_0\right)$. Thus we look for non-zero classes in these degrees.

	Without loss of generality, we can assume that $p_0$ is larger than the order of any of root of unity in the spectrum of $d\upphi(z_0)$. In particular, $p_{j+m}$ is never a multiple of the order of a root of unity in the spectrum of $d\upphi(z_0)$.  Since $z_0$ is homologically visible, we can apply \cite[Theorem 1.1]{GinzburgGuerel2010_LFHAG} (see lemma \ref{lem:HFloc_iterated} for the exact statement we need here) to conclude that
	\begin{equation}
		\HFloc_{*}\left(H^{\times p_{j+m}},z_0\right)\neq\{0\}.
	\end{equation}
	for all fixed $m\in\bN$ and all $j$.
	Recall from equation \eqref{eq:Fjm_iterated_fixed_pt_HFloc_shifted} in point \ref{item:mainprop_Fjm_dynquants} in proposition \ref{prop:auxiliary_prop} that
	\begin{equation}
		\HFloc_*\left(F_{j,m},z_0\right)\cong\HFloc_{*+\sigma_{j,m}}\left(H^{\times p_{j+m}},z_0\right).
	\end{equation}
	In particular there are degrees where the local Floer homology of $F_{j,m}$ at $z_0$ is not trivial. Apply lemma \ref{lem:degsupp_of_HFloc} to the local homology of $H^{\times p_{j+m}}$ at $z_0$ and use this isomorphism to find the support in degree of $\HFloc(F_{j,m},z_0)$:
	\begin{equation}\label{eq:calc_of_degsupp_HFlocFjmz0}
		\begin{multlined}
			\varnothing\neq\deg\supp\HFloc\left(F_{j,m},z_0\right)=\deg\supp\HFloc\left(H^{\times p_{j+m}},z_0\right)-\sigma_{j,m}\subseteq\\
			\subseteq\left[p_{j+m}\meanCZ\left(z_0,H\right)-\sigma_{j,m}-n,p_{j+m}\meanCZ\left(z_0,H\right)-\sigma_{j,m}+n\right]=\Delta(z_0,j,m).
		\end{multlined}
	\end{equation}
	Applying this to the calculation \eqref{eq:calc_of_HFFjm_around_I_degreeloc}, we see that when $J=I$ or $J=I+2C_{j,m}$,
	\begin{equation}\label{eq:HFII2C_neq_zero}
		\exists s\in\Delta\left(z_0,j,m\right)\text{ such that }\{0\}\neq \HF^J_s\left(F_{j,m}\right)=\HFloc_s\left(F_{j,m},z_0\right)\oplus\cdots
	\end{equation}
	Consider the inclusion-quotient morphism from action window $I$ to action window $I+2C_{j,m}$, defined in section \ref{ssub:inclusion_quotient_morphism}, equation \eqref{eq:def_of_iq_morphism}:
	\begin{equation}\label{eq:iq_I-I+2C}
		\Phi:\HF^I_*\left(F_{j,m}\right)\to\HF^{I+2C_{j,m}}_*\left(F_{j,m}\right).
	\end{equation}
	By lemma \ref{lem:iq_commutes_with_HFloc_inclusion}, the classes corresponding to $\HFloc_*\left(F_{j,m},z_0\right)$ are sent to themselves under $\Phi$. Since there are non-zero classes in this local Floer homology, which appears as a summand in both filtered homologies, we conclude
	\begin{equation}\label{eq:iq_I-I+2C_neq_0}
		\exists s\in\Delta\left(z_0,j,m\right):\Phi\neq 0.
	\end{equation}

	Now, by point \ref{item:mainprop_action_shift_estimate} in proposition \ref{prop:auxiliary_prop}, $F_{j,m}-G_j$ is compactly supported in a ball $B_{j,m}$. Therefore, there are continuation morphisms between their filtered Floer homologies with action shift $C_{j,m}=\left\|F_{j,m}-G_j\right\|_{L^\infty}$ (section \ref{sub:action_shift_of_continuation_morphisms}):
	\begin{equation}\label{eq:continuations_Fjm_Gj}
		\begin{cases}
			\cC&\colon\HF^J_*\left(F_{j,m}\right)\to\HF^{J+C_{j,m}}_*\left(G_j\right),\\
			\overline\cC&\colon\HF^J_*\left(G_j\right)\to\HF^{J+C_{j,m}}_*\left(F_{j,m}\right)
		\end{cases}
	\end{equation}
	The inclusion quotient morphism is factored by the composition of these morphisms, see lemma \ref{lem:contin_forth_and_back_factors_IQ}:
	\begin{equation}\label{eq:crucial_triangle}
		\begin{tikzcd}
			&\HF^{I+C_{j,m}}_*\left(G_j\right)\ar[dr,bend left,"\overline\cC"]&\\
			\HF^I_*\left(F_{j,m}\right)\ar[rr,"\Phi"]\ar[ru,bend left,"\cC"]& &\HF^{I+2C_{j,m}}_*\left(F_{j,m}\right)
		\end{tikzcd}
	\end{equation}
	The contradiction we are seeking arises from this factorization. The idea is to compare the degrees where $\Phi$ is certainly non-zero with the degrees where $\cC,\overline\cC$ are both certainly zero.

	As observed above \eqref{eq:iq_I-I+2C_neq_0}, $\Phi$ is certainly non-zero for some degrees $s\in\Delta(z_0,j,m)$ for all $m$ and all $j$ large enough. On the other hand, $\cC$ and $\overline\cC$ are non-zero only in the degrees corresponding to the supports in degree of local Floer homologies of fixed points of $G_j$. Using again lemma \ref{lem:degsupp_of_HFloc} and the calculations of the mean index of iterated fixed points with $G_j$ found in equation \eqref{eq:Gj_fixed_pts_dynamical_quantities}, point \ref{item:mainprop_Gj_dynquants} of proposition \ref{prop:auxiliary_prop},
	\begin{equation}\label{eq:degsuppHFlocGj}
		\deg\supp\HFloc_*\left(G_j,z\right)\subseteq\left[p_j\meanCZ(z,H)-n,p_j\meanCZ(z,H)+n\right]=\Gamma(z,j).
	\end{equation}
	There are two cases: either $z\in\Fix\upphi$ is such that $\meanCZ(z,H)=\meanCZ(z_0,H)$, or not. In the first case, we use the twist condition. If $\Delta(z_0,j,m)\cap\Gamma(z,j)\neq\varnothing$, then
	\begin{equation}\label{eq:degree_calc_case_CZzeqCZz0}
		\left|\left(p_{j+m}-p_j\right)\meanCZ(z_0,H)-\sigma_{j,m}\right|\leq 2n
	\end{equation}
	Combining this with point \ref{item:mainprop_index_shift_estimate} in proposition \ref{prop:auxiliary_prop} we have
	\begin{equation}
		p_{j+m}-p_j\leq\frac{3n}{\left|\meanCZ(z_0)-\overline\ind_\infty H\right|}
	\end{equation}
	But $p_{j+m}-p_j>2m$ for all $j$, so there exists an $m\in\bN$ such that $\Delta(z_0,j,m)$ and $\Gamma(z,j)$ are disjoint for every $j$.

	Assume instead that $\meanCZ(z,H)\neq\meanCZ(z_0,H)$. We have
	\begin{equation}\label{eq:degree_calc_case_CZzneqCZz0_1}
			\Delta(z_0,j,m)\cap\Gamma(z,j)\neq\varnothing\implies \left|p_{j+m}\meanCZ(z_0,H)-\sigma_{j,m}-p_j\meanCZ(z,H)\right|\leq 2n.
	\end{equation}
	Notice that point \ref{item:mainprop_unif_nonres_gaps} in proposition \ref{prop:auxiliary_prop} implies that $p_{j+m}/p_j\to 1$ and point \ref{item:mainprop_index_shift_estimate} that $\sigma_{j,m}/p_j\to0$. Therefore dividing everything by $p_j$ we see
	\begin{equation}\label{eq:degree_calc_case_CZzneqCZz0_2}
		\frac{1}{p_j}\left|p_{j+m}\meanCZ(z_0,H)-\sigma_{j,m}-p_j\meanCZ(z,H)\right|\xrightarrow[j\to\infty]{}\left|\meanCZ(z_0,H)-\meanCZ(z,H)\right|>0
	\end{equation}
	But since $p_j$ is unbounded in $j$, also $\left|p_{j+m}\meanCZ(z_0,H)-\sigma_{j,m}-p_j\meanCZ(z,H)\right|$ must be, and hence eventually larger than $2n$. So for every $m\in\bN$ there exists a $j$ large enough such that $\Delta(z_0,j,m)\cap\Gamma(z,j)=\varnothing$.

	This is a contradiction, because in both cases we found that there exists an $m$ such that for every $j$ large enough, $\Phi=\overline\cC\circ\cC$ is non-zero for some degrees in $\Delta(z_0,j,m)$ while $\cC$ and $\overline\cC$ are both zero for all degrees in $\Delta(z_0,j,m)$.
\end{proof}

\section{Proof of the auxiliary proposition}
\label{sec:proof_of_the_auxiliary_proposition}

\subsection{Point \protect\ref{item:mainprop_unif_nonres_prime_iterates}}
We construct a sequence $(p_j)_{j\in\bN}$ of special prime iterates for any non-degenerate linear symplectomorphism. This result is of independent interest and uses the Vinogradov equidistribution theorem on prime iterates of irrational numbers modulo 1. We first recall some elementary facts about equidistribution modulo 1.

Recall that an \emph{elementary} set in $\bR^q$ is a finite union of $q$-dimensional parallelepids. Elementary sets have an obviously defined $q$-volume. Recall that the outer volume of a set $C\subset\bR^q$ is the infimum of the $q$-volumes of elementary sets containing $C$, while the inner volume is the supremum of the $q$-volumes of elementary sets contained in $C$. A subset is Jordan-measurable if its inner and outer volumes coincide, so that it has a well defined $q$-volume. The $q$-volume of a Jordan-measurable set $C\subset\bR^q$ will be denoted by $|C|$. It is the same thing as its Lebesgue measure, but there are Lebesgue measurable sets which are not Jordan-measurable.

We give a definition from \cite[Chapter 1,\S6]{KuipersNiederreiter1974_UnifDist}. If $v\in\bR^q$, we denote by $\{v\}\subset[0,1)^q$ the vector given by the fractional parts of the coordinates of $v$. Let $(v_j)\subset\bR^q$ be a sequence in $\bR^q$. We say that $(v_j)$ is \emph{equidistributed} modulo 1 if for every Jordan-measurable set $C$ one has
\begin{equation}\label{eq:equidistribution_def_Jsets}
	\lim_{N\to\infty}\frac{\#\left\{x_j:j\leq N,\ x_j\in C\right\}}{N}=|C|
\end{equation}

It is not hard to see \cite[Theorem 6.1]{KuipersNiederreiter1974_UnifDist} that a sequence $(v_j)\subset\bR^q$ is equidistributed mod 1 if and only if for every \emph{continuous} function $f\colon[0,1]^q\to\bR$ we have
\begin{equation}\label{eq:equidistribution_def_fctns}
	\lim_{N\to\infty}\frac1N\sum_{j=1}^Nf\left(\left\{v_j\right\}\right)=\int_{[0,1]^q}fdx.
\end{equation}

We need the following equidistribution statement, which is a simple consequence of a theorem of Weyl combined with a theorem of Vinogradov.
\begin{lemma}\label{lem:prime_mults_irrat_vec}
	Let $a_1,\dots,a_q\in\bR\setminus\bQ$ be rationally independent irrational numbers. Let $(P_j)_j\subset\bZ$ be the sequence of prime numbers indexed increasingly. Define $v_j=(P_ja_1,\dots,P_ja_q)\in\bR^q$. Then $(v_j)\subset\bR^q$ is equidistributed mod 1.
\end{lemma}
\begin{proof}
	Recall that $a_1,\dots,a_q\in\bR$ are rationally independent irrational numbers if the set $\{1,a_1,\dots,a_q\}$ spans a $\bQ$-subspace of $\bR$ of dimension $q$. Denote by $\vec a=(a_1,\dots,a_q)\in\bR^q$ and notice $v_j=P_j\vec a$.
	We shall make use of a theorem of Weyl, which can be found in \cite[Theorem 6.3]{KuipersNiederreiter1974_UnifDist}. It states that a sequence $(w_j)\subset\bR^q$ is equidistributed mod 1 if and only if for every non-zero lattice point $h\in\bZ^q\subset\bR^q$ the sequence $\left(\langle w_j,h\rangle\right)\subset \bR$ is equidistributed mod 1. If $h\neq 0$ is such a lattice point, we have $\left\langle h,v_j\right\rangle=P_j\left\langle h,\vec a\right\rangle$. Now, since $a_1,\dots,a_q$ are rationally independent irrational numbers, the number $\alpha=\left\langle h,\vec a\right\rangle$ is irrational. Therefore we are reduced to studying the sequence $(P_j\alpha)\subset\bR$ with $\alpha\in\bR\setminus\bQ$ irrational. We conclude using a theorem of Vinogradov \cite[Chapter XI]{Vinogradov1954_TrigSums}, which states that if $\alpha\in\bR\setminus\bQ$ is an irrational number, then the sequence $(P_j\alpha)\subset\bR$ is equidistributed mod 1.
\end{proof}

\begin{prop}\label{prop:uniform_nonresonant_iterations_of_linear_symplectic_map}
	Let $\upphi_\infty\in\Sp(2n)$ be a linear symplectic map with $\det\left(\upphi_\infty-\bI\right)\neq 0$. Then there exists a $\nu_\infty>0$ and an increasing sequence $(p_j)_{j\in\bN}$ of prime numbers such that:
	\begin{enumerate}
		\item The spectrum of the $p_j$th iterate of $\upphi_\infty$ remains uniformly away from $1$:
			\begin{equation}\label{eq:unif_nonres_iters_stay_away_from_eigenvalue_1}
				\left|\left(\upphi_\infty^{p_j}-\bid\right)^{-1}\right|^{-1}\geq \nu_\infty\quad\forall j\in\bN.
			\end{equation}
		\item The sequence $(p_j)_{j\in\bN}$ satisfies
			\begin{equation}\label{eq:gaps_in_unif_nonres_iters}
				p_{j+m}-p_j=o\left(p_j\right)\quad\text{as }j\to\infty\quad\forall m\in\bN.
			\end{equation}
	\end{enumerate}
\end{prop}
\begin{proof}
	Notice that the only part of the spectrum that may possibly approach $1$ as we iterate $\upphi$ is given by the eigenvalues on $\Un(1)$. So let $\{e^{\pm i\alpha_1},\dots,e^{\pm i\alpha_l}\}$ be the eigenvalues on $\Un(1)$, listed repeating the multiple eigenvalues, when necessary. Let $\alpha_1,\dots,\alpha_l$ be choices of arguments of these eigenvalues, again repeated according to their multiplicity, and $a_j=\alpha_j/2\pi$. These are well defined numbers mod 1. There are two cases: the set $\{1,a_1,\dots,a_l\}$ spans a $\bQ$-subspace of $\bR$ of rank $1$, or it spans a $\bQ$-subspace of $\bR$ of rank at least 2.

	In the first case, each $a_j$ is a rational number, or equivalently we have that every eigenvalue of $\upphi_\infty$ on the unit circle is a root of unity. In this case it suffices to take a prime number $p_0\gg2$ larger than the largest prime factor of any order of these roots of unity, and $p_i,i\geq 1$ will be all the prime numbers larger than $p_0$ listed in increasing order. By the prime number theorem, this sequence satisfies the estimate on gaps \eqref{eq:gaps_in_unif_nonres_iters}, as we shall see below for the harder case of rank $\geq2$. By construction, none of the eigenvalues of $\upphi_\infty^{p_j}$ is 1 for every $j$, so the spectrum of $\upphi_\infty^{p_j}$ is uniformly bounded away from $1$, meaning that $\upphi_\infty^{p_j}-\bid$ is uniformly invertible.
	
	Now, assume that $\{1,a_1,\dots,a_l\}$ spans a $\bQ$-subspace of rank $q\geq 2$. This is equivalent to saying that there are exactly $q$ rationally independent irrational numbers in $\{a_1,\dots,a_l\}$. Without loss of generality we may assume that $a_1,\dots,a_q$ are such numbers, and that $a_i=\sum_{j=1}^qr_{ij}a_j$ for all $i\geq q+1$ where $r_{ij}\in\bQ$ for all $i\geq q+1$ and $j\leq q$. Set $r_{ij}=\delta_{ij}$ when $i,j\leq q$. 

	Since $\{a_1,\dots,a_q\}$ is a rationally independent set of irrational numbers, we can use lemma \ref{lem:prime_mults_irrat_vec} in order to conclude that the sequence $\left(P\vec a\right)_{P\text{ prime}}\subset\bR^q$ is equidistributed mod 1, where $\vec a=(a_1,\dots,a_q)$. By definition, for every Jordan-measurable set $C\subset[0,1]^q$ of $q$-volume $|C|$ it holds that
	\begin{equation}\label{eq:def_of_equidistribution}
		\lim_{n\to\infty}\frac{\#\left\{P\text{ prime }:P\leq n,\ P\vec a \bmod 1\in C\right\}}{\pi(n)}=|C|.
	\end{equation}
	Here $\pi\colon\bN\to\bN$ is the prime counting function, $\pi(n)=\#\{P\text{ prime}:P\leq n\}$.

	We look for a Jordan measurable set $C\subset\bR^q/\bZ^q$ of positive volume and a $c>0$ such that
	\begin{equation}\label{eq:unif_nonres_prime_iterates_wincon}
		P\vec{a}\bmod 1\in\ C\implies \mathrm{dist}_{\Un(1)}\left(\sigma(\upphi_\infty^P)\cap\Un(1),1\right)>c.
	\end{equation}
	Notice that $1\in\sigma(\upphi_\infty^P)$ if and only if $Pa_i=0\bmod 1$ for at least one $i\in\{1,\dots,l\}$. Consider the following collection of hyperplanes in $\bR^q$:
	\begin{equation}
		\Pi_i=\left\{x\in\bR^q:\sum_{j=1}^q r_{ij}x_j=0\right\},\quad\Pi=\bigcup_{i=1}^n\Pi_i
	\end{equation}

	\begin{figure}[h]
		\centering
		\begin{overpic}[width=.6\textwidth]{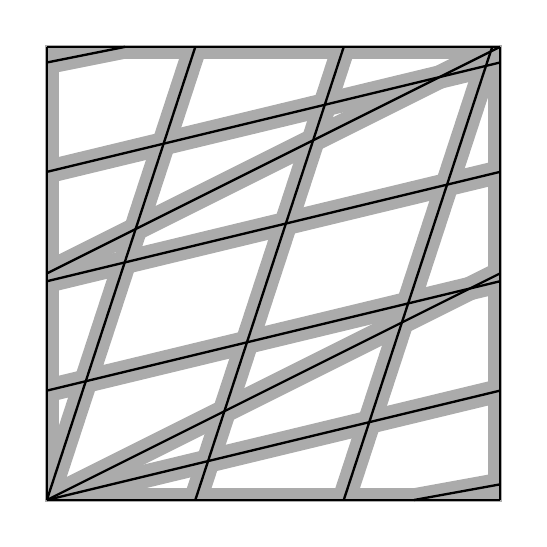}
			\put (45,2) {$\bR^q/\bZ^q$}
			\put (50,50) {$\Pi/\bZ^q$}
			\put (33,43) {$C^b_e$}
		\end{overpic}
		\caption{The black lines are $\Pi/\bZ^q$. Notice that the ``boundary box'' is included, since $r_{ij}=\delta_{ij}$ when $i,j\leq q$. The gray area represents their $e$-thickening. The white area represents $C^b_e$, which by construction has volume $\geq 1-b$. As long as this is positive, the prime iterates which stay uniformly away from resonances are equidistributed mod 1.}
		\label{fig:resonances}
	\end{figure}
	The set $\Pi$ is the union of finitely many hyperplanes defined by equations with \emph{rational} coefficients. Therefore, its projection $\Pi/\bZ^q\subset\bR^q/\bZ^q$ is a \emph{finite} collection (see Figure \ref{fig:resonances}) of hyperplanes such that
	\begin{equation}\label{eq:unif_nonres_iterates_wincon_neg}
		P\vec a\bmod 1\in\Pi/\bZ^q\iff1\in\sigma(\upphi_\infty^P).
	\end{equation}
	Moreover, since $\Pi/\bZ^q$ can be seen as a finite union of $(q-1)$-dimensional parallelepids in $[0,1)^q$, it has zero $q$-volume. It follows that for any $0<b<1$ there exists an $e>0$ such that
	\begin{equation}
		C^b_e=\left(\bR^q/\bZ^q\right)\setminus \bigcup_{x\in\Pi/\bZ^q}B_e(x)
	\end{equation}
	has $q$-volume $\left|C^b_e\right|\geq 1-b$. Clearly $e\to 0$ as $b\to 0$, so we can choose $b$ small enough so that $e<1/2$. By construction
	\begin{equation}\label{eq:unif_nonres_prime_iterates-unif_invert}
		P\vec a\bmod 1\in C^b_e\implies \mathrm{dist}_{\Un(1)}\left(\sigma(\upphi_\infty^P)\cap\Un(1),1\right)>2\pi e.
	\end{equation}
	By equidistribution, since $|C|=|C^b_e|\geq 1-b>0$, there is an increasing sequence $(p_j)_{j\geq 1}$ of prime numbers such that $p_j\vec a\bmod 1\in C^b_e$ for every $j$. By equation \eqref{eq:unif_nonres_prime_iterates-unif_invert} we have that $\upphi_\infty^{p_j}-\bid$ is uniformly invertible, so the first point is satisfied for an uniform $\nu_\infty>0$.

	To estimate the gaps, notice that from \eqref{eq:def_of_equidistribution} the cumulative distribution function of the sequence $\left(p_j\right)_j$ satisfies
	\begin{equation}
		P(n)=\#\left\{j\in\bN:p_j\leq n\right\}=|C|\pi(n)+o(\pi).
	\end{equation}
	In fact this estimate is much stronger than the estimate on the gaps we need: our sequence of primes is distributed like the sequence of \emph{all} primes, up to an irrelevant multiplicative constant.
	In particular, $(p_j)_{j\in\bN}$ satisfies a form of prime number theorem:
	\begin{equation}\label{eq:pnt_for_unif_nonres}
		P(n)=|C|\frac{n}{\log n}+o\left(\frac{n}{\log n}\right).
	\end{equation}
	In particular,
	\begin{equation}\label{eq:pnt_for_unif_nonres-useful}
		\lim_{j\to\infty}\frac{p_j}{|C|j\log j}=1.
	\end{equation}
	Write $p_{j+m}-p_j=\sum_{k=0}^{m-1}p_{j+k+1}-p_{j+k}$. By induction, we are thus reduced to show that $p_{j+1}-p_j=o(p_j)$. Now we compute, using \eqref{eq:pnt_for_unif_nonres-useful},
	\begin{equation}
		\lim_{j\to\infty}\frac{p_{j+1}}{p_j}=\lim_{j\to\infty}\frac{p_{j+1}}{|C|(j+1)\log(j+1)}\cdot\frac{|C|j\log j}{p_j}\cdot\frac{|C|(j+1)\log(j+1)}{|C|j\log j}=1.
	\end{equation}
	It follows that
	\begin{equation}
		\lim_{j\to\infty}\frac{p_{j+1}-p_j}{p_j}=\lim_{j\to\infty}\frac{p_{j+1}}{p_j}-1=0
	\end{equation}
	which was our claim.
\end{proof}

We apply this lemma to the linear map at infinity $\upphi_\infty$ of the ALHD $\upphi$ and we obtain an increasing sequence of prime numbers $(p_j)_j$ which satisfies items \ref{item:mainprop_ndg_at_infty} and \ref{item:mainprop_unif_nonres_gaps} in of point \ref{item:mainprop_unif_nonres_prime_iterates} of proposition \ref{prop:auxiliary_prop}.

\begin{rmk}
	Since $p_{j+m}/p_j\to 1$ as $j\to\infty$ we can interchange asymptotics in $p_j$ with asymptotics in $p_{j+m}$ without loss of information.
\end{rmk}

\subsection{Point \protect\ref{item:mainprop_H_and_Gj}, construction of \texorpdfstring{$H$}{H}}
First, we show that any ALHD $\upphi$ with $\upphi_\infty\in\Un(n)$ can be generated by an asymptotically quadratic Hamiltonian $H=Q+h$ such that $\phi^t_Q\in\Un(n)$ for all $t$.
\begin{lemma}\label{lem:relooping_to_auton_unitary}
	Let $H'_t=Q'_t+h'_t$ be an asymptotically quadratic Hamiltonian where $\phi^1_{Q'}\in\Un(n)$. There exists a time-dependent quadratic Hamiltonian $P$ generating a loop of unitary maps such that $H_t=P\#H'_t=Q+h_t$ is an asymptotically quadratic Hamiltonian with \emph{time-independent} quadratic Hamiltonian at infinity such that $\phi^t_Q\in\Un(n)$ for all $t$ and $\phi^1_H=\phi^1_{H'}$.
\end{lemma}
\begin{proof}
	Let $\phi^1_{Q'}=U\in\Un(n)\subset\Sp(2n)$. Since $U$ is unitary, it has a logarithm $b=\log U\in\mathfrak{u}(n)$. Notice that $J_0b=bJ_0$ and $b^T=-b$, since $\mathfrak{u}(n)=\mathfrak{o}(2n)\cap\mathfrak{gl}(n,\bC)$. Set $B=J_0b$. Then $B$ is symmetric: $B^T=-b^TJ_0=B$. Define
	\begin{equation}
		Q(z)=\frac12\left\langle Bz,z\right\rangle,\quad P=Q\#\overline{Q'}
	\end{equation}
	Notice $\phi^t_Q\in\Un(n)$ and $\phi^1_Q=U$ by construction. Then $\phi^1_P=\phi^1_Q\circ\left(\phi^1_{Q'}\right)^{-1}=\id$ so $\phi^t_P$ is a loop of unitary matrices. Finally
	\begin{equation}
		\begin{split}
			P\#H_t&=P_t+H_t\circ\left(\phi^t_P\right)^{-1}=Q-Q'_t\circ\phi^t_{Q'}\circ\phi^{-t}_Q+Q'_t\circ\phi^t_{Q'}\circ\phi^{-t}_Q+h'_t\circ\phi^t_{Q'}\circ\phi^{-t}_Q=\\
			&=Q+h_t
		\end{split}
	\end{equation}
	where $h_t=h'_t\circ\left(\phi^t_P\right)^{-1}$.
\end{proof}
Let $Q$ be an autonomous quadratic Hamiltonian generating a flow of unitary maps. Since every Hermitian matrix can be unitarily diagonalized, we can find a basis of $\bC^n$ for which $Q$ is the following diagonal quadratic form:
\begin{equation}\label{eq:quad_form_at_infty_diagonal}
	Q(z)=\frac12\sum_{r=1}^{n}\alpha_r\left|z_r\right|^2,\quad z=(z_1,\dots,z_n)\in\bC\oplus\dots\oplus\bC=\bC^n,\quad \alpha_r\in\bR.
\end{equation}
Notice that we are not assuming that $Q$ is positive-definite.

In conclusion, let $\upphi$ be a rapidly asymptotically linear Hamiltonian diffeomorphism with $\upphi_\infty\in\Un(n)$. Choose any rapidly asymptotically quadratic Hamiltonian $H'$ generating $\upphi$. We can compose $H'$ with a quadratic Hamiltonian $P$ so that the Hamiltonian $H=P\#H=Q+h$ is rapidly asymptotically quadratic (see lemma \ref{lem:linear_flow_composition_preserves_action}, point \ref{item:linflow_comp_props_preserves_asyquad_ndg}) and has autonomous diagonal quadratic form at infinity. This is the Hamiltonian $H$ we need in point \ref{item:mainprop_H_and_Gj}.

\subsection{Point \protect\ref{item:mainprop_H_and_Gj}, construction of \texorpdfstring{$F_{j,m}$}{Fjm}, step 1: truncation}

The Hamiltonian $F_{j,m}$ is constructed by truncation of $H^{p_{j+m}\ominus p_j}$ and then interpolation of its quadratic Hamiltonian at infinity from $Q^{p_{j+m}\ominus p_j}$ to $0\wedge Q^{\times p_j}$. Here we explain the first step and how the rapidity hypothesis gives an estimate on the growth of the truncation radius in terms of the iteration.

First, we use a very explicit sequence of loops to define the re-indexed Hamiltonian $H^{p_{j+m}\ominus p_j}$. Recall that the sequence $\sigma_{j,m}$ is defined as follows:
\begin{equation}\label{eq:def_of_sigmaj_sequence_of_shifts}
	\sigma_{j,m}=\ind_\infty H^{\times p_{j+m}}-\ind_\infty H^{\times p_j}.
\end{equation}
The estimate on the growth of $\sigma_{j,m}$ in point \ref{item:mainprop_index_shift_estimate} is just the estimate already given in section \ref{ssub:re_indexing_iterated_hamiltonians}, equation \eqref{eq:reloop_index_shift_estimate}, with $k=p_{j+m},l=p_j$ and $2\mu=\sigma_{j,m}$.

For brevity set $k=p_{j+m}$ and $l=p_j$. Given the formula for the quadratic Hamiltonian at infinity \eqref{eq:quad_form_at_infty_diagonal}, we can find an explicit formula for the quadratic Hamiltonian $P^{k,l}$ generating the loop we need to reduce the index at infinity:
\begin{equation}\label{eq:quad_form_delooper}
	P^{k,l}(z)=\frac12\sum_r2\pi\floor{\frac{(k-l)\alpha_r}{2\pi}}\left|z_r\right|^2
\end{equation}
where $\floor{\alpha}=\max\{j\in\bZ:j\leq\alpha\}$.

We calculate
\begin{equation}\label{eq:calc_of_short_connecting_path_hamilt}
	\begin{split}
		\overline{P^{k,l}}\#H^{\times (k-l)}_t(z)&=\frac12\sum_r2\pi\left[\frac{(k-l)\alpha_r}{2\pi}-\floor{\frac{(k-l)\alpha_r}{2\pi}}\right]\left|z_r\right|^2+(k-l)h_{(k-l)t}\circ\phi^t_{P^{k,l}}(z)=\\
		&=R(z)+h^{\times(k-l)}_t\circ\phi^t_{P^{k,l}}(z).
	\end{split}
\end{equation}
It is extremely important to remark that for any $k>l$ odd integers, by construction we have the estimate
\begin{equation}\label{eq:quadHam_short_connecting_path_hamilt_unif_bded_coeffs}
	\left|R(z)\right|\leq \pi |z|^2\implies \left\|\nabla^2R\right\|_{L^\infty}\leq \pi.
\end{equation}
Substituting these formulas in the definition of the re-looped Hamiltonian \eqref{eq:def_of_relooping_at_infty},
\begin{equation}\label{eq:full_preinterp_F}
	H^{k\ominus l}=\left(\overline{P^{k,l}}\#H^{\times(k-l)}\right)\wedge H^{\times l}=R\wedge Q^{\times l}+\left(h^{\times(k-l)}\circ\phi^\cdot_{P^{k,l}}\right)\wedge h^{\times l}=Q^{k\ominus l}+h^{k\ominus l}.
\end{equation}
By construction $\ind_\infty(H^{k\ominus l})=\ind_\infty(H^{\times l})$.

Truncate the Hamiltonian $H^{k\ominus l}$ following proposition \ref{prop:nonresonant_truncation}, where now the lower bound on the radius given in equation \eqref{eq:def_of_tildeR_1}, say $\tilde R_{k,l}>0$, depends on $k=p_{j+m}$ and $l=p_j$ through the tails of the function $h^{k\ominus l}$. In the construction of the truncation, we can choose for example $\sqrt{R_a}=\tilde R_{k,l}+1$, and obtain a new Hamiltonian $\widetilde{H^{k\ominus l}}$ which is equal to $H^{k\ominus l}$ in a ball of radius $\sqrt{R_a}$, and equal to $Q^{k\ominus l}$ outside a ball of radius $\sqrt{2R_a}$. Let us write
\begin{equation}
	\widetilde{H^{k\ominus l}_t}(z)=Q^{k\ominus l}(z)+\widetilde{h^{k\ominus l}_t}(z)
\end{equation}
where $\widetilde{h^{k\ominus l}}$ is compactly supported in a ball of radius $\sqrt{2R_a}$.

Here is the first crucial consequence of the rapidity condition on asymptotically quadratic Hamiltonians.
\begin{lemma}\label{lem:estimate_truncation_radius_k}
	If $H$ is rapidly asymptotically quadratic, then $\tilde R_{p_{j+m},p_j}=o\left(\sqrt{p_{j+m}}\right)$ as $j\to\infty$.
\end{lemma}
\begin{proof}
	The truncation radius $\tilde R_{k,l}$ depends on the tail functions of $h^{k\ominus l}$ via \eqref{eq:def_of_tildeR_1}, which we now estimate in terms of the tail functions of $h$. Henceforth, we will set $\sigma_j=\sigma_j^h$ for $j=0,1$ and $\sigma^{k\ominus l}_0,\sigma^{k\ominus l}_1$ the corresponding tails of $h^{k\ominus l}$. Recall from equation \eqref{eq:full_preinterp_F} that $h^{k\ominus l}$ is a concatenation:
	\begin{equation}
		h^{k\ominus l}_t(z)=
			\begin{cases}
				2\tau'(2t)h^{\times(k-l)}_{\tau(2t)}\left(\phi^{\tau(2t)}_{P^{k,l}}z\right),& t\in\left[0,\frac12\right]\\
				2\tau'(2t-1)h^{\times l}_{\tau(2t-1)}(z), & t\in\left[\frac12,1\right].
			\end{cases}
	\end{equation}
	This gives us an explicit formula for the tail functions.
	\begin{equation}\label{eq:tails_of_relooped}
		\begin{multlined}[b]
			\sigma^{k\ominus l}_1(R)=\sup_{t\in[0,1],|z|\geq R}\frac{\left|\nabla h^{k\ominus l}_t(z)\right|}{|z|}=\\
			\left.\max
			\begin{cases}
				\sup_{t\in\left[0,\frac12\right],|z|\geq R}\frac{2l\tau'\left(2t\right)\left|\nabla h_{l\tau(2t)}(z)\right|}{|z|},\\
				\sup_{t\in\left[\frac12,1\right],|z|\geq R}\frac{2(k-l)\tau'(2t-1)\left|\left(\phi^{2\tau(t)-1}_{P^{k,l}}\right)^{-1}\nabla h_{(k-l)\tau(2t-1)}\left(\phi^{2\tau(t)-1}_{P^{k,l}}z\right)\right|}{|z|}
			\end{cases}\right\}.
		\end{multlined}
	\end{equation}
	Starting from the last supremum, we can use that the loop is unitary:
	\begin{equation}\label{eq:tails_of_relooped_last_sup}
		\begin{multlined}[b]
			\sup_{t\in\left[\frac12,1\right],|z|\geq R}\frac{2(k-l)\tau'(2t-1)\left|\left(\phi^{2\tau(t)-1}_{P^{k,l}}\right)^{-1}\nabla h_{(k-l)\tau(2t-1)}\left(\phi^{2\tau(t)-1}_{P^{k,l}}z\right)\right|}{|z|}\leq\\
			\leq4(k-l)\sup_{t\in\left[\frac12,1\right],|z|\geq R}\frac{\left|\nabla h_{(k-l)\tau(2t-1)}\left(\phi^{2\tau(t)-1}_{P^{k,l}}z\right)\right|}{\left|\phi^{2\tau(t)-1}_{P^{k,l}}z\right|}\leq\\
			\leq4(k-l)\sup_{\tau\in[0,1],\left|\zeta\right|\geq R}\frac{\left|\nabla h_\tau\left(\zeta\right)\right|}{|\zeta|}=4(k-l)\sigma_1(R).
		\end{multlined}
	\end{equation}
	The first supremum is easy to estimate:
	\begin{equation}
		\sup_{t\in\left[0,\frac12\right],|z|\geq R}\frac{2l\tau'\left(2t\right)\left|\nabla h_{l\tau(2t)}(z)\right|}{|z|}\leq 4l\sigma_1(R).
	\end{equation}
	All in all, we can estimate the max by the sum of its terms and obtain
	\begin{equation}\label{eq:tail1_of_relooped_4k}
		\sigma^{k\ominus l}_1(R)\leq 4k\sigma_1(R).
	\end{equation}
	In a completely analogous way,
	\begin{equation}\label{eq:tail0_of_relooped_4k}
		\sigma_0^{k\ominus l}(R)\leq 4k\sigma_0(R).
	\end{equation}
	
	Now that we estimated the tails, let's study the definition of the lower bound on the truncation radius \eqref{eq:def_of_tildeR_1} to understand the dependence of $\tilde R_{k,l}$ on $k,l$. This is defined as follows:
	\begin{equation}\label{eq:def_of_Rk_truncrad_iter}
		\tilde R_{k,l}=\max\left\{R:\bar\sigma^{k\ominus l}_{01}\left(R\right)\geq e^{-3C^{k\ominus l}_2}\nu_\infty^{k\ominus l}\right\}.
	\end{equation}
	We used the shortened notations
	\begin{equation}
		\bar\sigma^{k\ominus l}_{01}(R)=\left(\sqrt2+e^{-C_2^{k\ominus l}}\right)\bar\sigma_1^{k\ominus l}(R)+4\sqrt2 c_\chi\bar\sigma_0^{k\ominus l}(R), \quad C_2^{k\ominus l}=\left\|\nabla^2H^{k\ominus l}\right\|_{L^\infty},
	\end{equation}
	and
	\begin{equation}\label{eq:resoprox_cst_reloop}
		\nu^{k\ominus l}_\infty=\left|\left(\phi^1_{Q^{k\ominus l}}-\bid\right)^{-1}\right|^{-1}.
	\end{equation}
	First, let's estimate the left-hand side of the inequality in \eqref{eq:def_of_Rk_truncrad_iter} from above. To do this, we estimate the Hessian. It's easy to see that
	\begin{equation}
		\left\|\nabla^2 h^{k\ominus l}\right\|_{L^\infty}\leq 4k\left\|\nabla^2 h\right\|_{L^\infty}=4kc_2.
	\end{equation}
	Combining the definition \eqref{eq:full_preinterp_F} with the estimate \eqref{eq:quadHam_short_connecting_path_hamilt_unif_bded_coeffs} we obtain that
	\begin{equation}\label{eq:relooped_hessian_est}
		C^{k\ominus l}_2=\left\|\nabla^2H^{k\ominus l}\right\|_{L^\infty}\leq\pi l+4kc_2\leq (\pi+4c_2)k
	\end{equation}
	All in all there is some constant $\beta>0$ such that for all $k=p_{j+m}$ and $l=p_j$,
	\begin{equation}\label{eq:barsigma01kominl_estimate}
		\begin{multlined}[b]
			\bar\sigma^{k\ominus l}_{01}\left(R\right)\leq 4k\left[\left(\sqrt 2+e^{-\left(\pi+4c_2\right)k}\right)\bar\sigma_1\left(R\right)+4\sqrt{2}c_\rho \bar\sigma_0\left(R\right)\right]\leq\\
			\leq \beta k\left[\bar\sigma_1\left(R\right)+\bar\sigma_0\left(R\right)\right].
		\end{multlined}
	\end{equation}
	Now we estimate the right-hand side of the inequality in \eqref{eq:def_of_Rk_truncrad_iter} from below. We have to calculate $\nu^{k\ominus l}_\infty$. By its very definition \eqref{eq:resoprox_cst_reloop}, this constant depends only on the linear map at infinity of the system. This is
	\begin{equation}
		\phi^1_{Q^{k\ominus l}}=\phi^1_{\left(\overline{P^{k,l}}\# Q^{\times(k-l)}\right)\wedge Q^{\times l}}=\phi^{-1}_{P^{k,l}}\circ\phi^{k-l}_Q\circ\phi^l_Q=\phi^k_Q
	\end{equation}
	so by definition
	\begin{equation}
		\nu^{k\ominus l}_\infty=\left|\left(\upphi_\infty^k-\bid\right)^{-1}\right|^{-1}\geq \nu_\infty>0
	\end{equation}
	where $\nu_\infty>0$ is the constant, independent of $k=p_{j+m}$, coming from point \ref{item:mainprop_unif_nonres_prime_iterates}.
	We already estimated the Hessian in \eqref{eq:relooped_hessian_est}. Therefore,
	\begin{equation}\label{eq:defRk_lhs_est_below}
		e^{-3C^{k\ominus l}_2}\nu^{k\ominus l}_\infty\geq e^{-3\left(\pi+4c_2\right)k}\nu_\infty
	\end{equation}
	Putting everything together we can estimate $\tilde R_{k,l}$ from above by a similar quantity with a simpler definition: there are positive constants $\nu'_\infty=\beta^{-1} \nu_\infty,\gamma=3(\pi+4c_2)$ depending only on $H$ (and therefore not on the iterates $l=p_j$ and $k=p_{j+m}$) such that, setting
	\begin{equation}\label{eq:def_of_rk}
		r_k=\nu'_\infty\frac{e^{-\gamma k}}{k},
	\end{equation}
	we have
	\begin{equation}\label{eq:Rk_est_above}
		\tilde R_{k,l}\leq R_k=\max\left\{R:\bar\sigma_0\left(R\right)+\bar\sigma_1\left(R\right)\geq r_k\right\}\quad\forall l.
	\end{equation}
	Now we estimate $R_k$ from above. We are presented with the problem of studying $R_k=\mathbf{R}(r_k)$ where, setting $\mathbf{r}(R)=\bar\sigma_0\left(R\right)+\bar\sigma_1\left(R\right)$,
	\begin{equation}
		\mathbf{R}(r)=\max\left\{R:\mathbf{r}(R)\geq r\right\}.
	\end{equation}
	Since $\mathbf{r}$ is non-increasing and continuous, $\mathbf{R}$ is well defined as a function and is a right inverse of $\mathbf{r}$: for $r\in(0,\mathbf{r}(0))$ it holds that $\mathbf{r}(\mathbf{R}(r))=r$. Moreover it is non-increasing and lower semi-continuous. Since $\mathbf{r}(R)\to 0$ as $R\to\infty$, we have that $\mathbf{R}(r)\to\infty$ as $r\to 0$.
	\begin{rmk}
		In fact since $\mathbf{r}$ is non-increasing and continuous we also know that $\mathbf{R}(\mathbf{r}(R))\geq R$.
	\end{rmk}
	The rapidity condition on $H$ implies that
	\begin{equation}\label{eq:rapidity_condition_Rk_estimate}
		\mathbf{r}(R)=\bar\sigma_0\left(R\right)+\bar\sigma_1\left(R\right)=o\left(e^{-R^2}\right)\text{ as }R\to\infty.
	\end{equation}
	This implies that $\mathbf{R}(r)=o\left(|\log r|^{\frac 12}\right)$ as $r\to 0$.
	In other words, for $r$ small, $\mathbf{R}(r)=\epsilon(r)|\log r|^\frac12$ with $\epsilon$ converging to $0$ as $r\to 0$. This gives, as $k\to\infty$,
	\begin{equation}
		\begin{split}
			\frac{R_k}{\sqrt k}=\epsilon\left(r_k\right)\frac{\left|\log r_k\right|^\frac12}{\sqrt k}&=\epsilon\left(r_k\right)\sqrt{\frac{\left|\log\left(M'_\infty e^{-\gamma k}k^{-1}\right)\right|}{k}}=\\
			&=\epsilon(r_k)\sqrt{\frac{\left|\log M'_\infty-\log k-\gamma k\right|}{k}}\leq\\
			&\leq\underbrace{\epsilon(r_k)}_{\to 0}\underbrace{\sqrt{\frac{\left|\log M'_\infty\right|}{k}+\frac{\log k}{k}+\gamma}}_{\to\sqrt\gamma}\to 0
		\end{split}
	\end{equation}
	Therefore $R_k=o\left(\sqrt k\right)$ as $k\to\infty$ which implies $\tilde{R}_{p_{j+m},p_j}=o\left(\sqrt{p_{j+m}}\right)$ as $j\to\infty$.
\end{proof}

\subsection{Point \protect\ref{item:mainprop_H_and_Gj}, construction of \texorpdfstring{$F_{j,m}$}{Fjm}, step 2: quadratic transplant}

Up to this point, we have a Hamiltonian $\widetilde{H^{k\ominus l}}$ which is equal to $Q^{k\ominus l}$ outside a  ball whose radius grows much slower than $\sqrt k$. Now, we will interpolate this quadratic form using the construction in section \ref{ssub:interpolation_of_quadratic_hamiltonians} with a very specific non-resonant homotopy of unitary Hamiltonian systems.

Recall the quadratic Hamiltonian $R$ from the formula for $H^{k\ominus l}$, \eqref{eq:full_preinterp_F}. Set for notational convenience $R(z)=\frac12\sum_j\beta_r|z_r|^2$. Notice that $\beta_r\in(0,2\pi)$, which again implies the important estimate \eqref{eq:quadHam_short_connecting_path_hamilt_unif_bded_coeffs}. We homotope $R(z)$ to the zero quadratic Hamiltonian, defining
\begin{equation}\label{eq:special_short_unitary_nonres_homotp_firstpart}
	R^s(z)=\frac12\sum_r\beta^s_r|z_r|^2
\end{equation}
where the angular velocities $\beta_r^s$ are determined in the following way: fix an $r$ and notice that the flow of $Q^{\times l}$ fixes the $r$th complex line in the symplectic decomposition $\bC^n=\bC\oplus\dots\oplus\bC$, and on this line it restricts to the map
\begin{equation}
	z_r\mapsto e^{-l\alpha_rit}z_r.
\end{equation}
Set $\zeta_r=e^{-l\alpha_ri}$ i.e. the image of the vector $1\in\bC$ under the map above at time $t=1$. Now consider the arc
\begin{equation}\label{eq:short_arc_jth}
	\gamma\colon[0,1]\to S^1\subset\bC,\quad\gamma(s)=e^{-i\beta_r(1-s)}\zeta_r
\end{equation}
We know by hypothesis that $\gamma(1)\neq 1$. If there is no $s\in(0,1)$ such that $\gamma(s)=1$, then we set $\beta_r^s=(1-s)\beta_r$. Otherwise $\beta_r^s=(1-s)\left(2\pi-\beta_r\right)$, i.e. we trace the complementary arc in the unit circle. Notice that the coefficients $\beta^r_s\in\bR$ may have any sign. By construction, $R^0=R$ and $R^1=0$. We define
\begin{equation}\label{eq:special_short_unitary_nonres_homotp}
	\mathcal{Q}^s= R^s\wedge Q^{\times l}.
\end{equation}
This is a homotopy between
\begin{equation}\label{eq:quadratic_transplant_qHams}
	\mathcal{Q}^0=R\wedge Q^{\times l}=Q^{k\ominus l},\quad \mathcal Q^1=0\wedge Q^{\times l}.
\end{equation}
For every $s$, the linear system defined by $\mathcal{Q}^s$ is non-degenerate and unitary by construction, exactly because we avoid the possible resonance which might happen if the arc traced by $\gamma$ at some $s$ hits $1$ for some $r$. By construction
\begin{equation}\label{eq:short_homotopy_is_O(1)}
	0\leq\left|\beta_r^s\right|<2\pi\implies \max_{(s,z)\in[0,1]\times B_\rho(0)}\left|R^s(z)\right|<\pi \rho^2.
\end{equation}

We want to apply proposition \ref{prop:nonresonant_interpolation_quadratic_forms} to $Q^0=Q^{k\ominus l}$ and $Q^1=0\wedge Q^{\times l}$ using the homotopy $\mathcal{Q}$ to obtain the interpolating semi-quadratic Hamiltonian $K$, defined by equation \eqref{eq:def_of_interpolation_at_infty}. Recall that the first radius of interpolation $R_0$ can be chosen arbitrarily, while the second $R_1>R_0$ depends on $R_0$ and the homotopy $\mathcal{Q}$. But with the uniformly non-resonant sequence of prime iterates $(p_j)_{j\in\bN}$ and the explicit homotopy $\mathcal{Q}$ we can control $R_1$.
\begin{lemma}\label{lem:second_interp_radius_unif_bded_along_unif_nnres_iters}
	Let $k=p_{j+m}$ and $l=p_j$ be uniformly non-resonant prime iterates. Let $Q^0=Q^{k\ominus l}$, $Q^1=0\wedge Q^{\times l}$ and $\mathcal{Q}$ the non-resonant homotopy of unitary Hamiltonian systems defined in \eqref{eq:special_short_unitary_nonres_homotp}. Let $R_0>0$ be fixed arbitrarily. There exists a $\overline{C}>0$, independent of $k$ and $l$, such that if $R_1$ is defined by equation \eqref{eq:def_of_last_interp_radius}, then $R_1\leq\overline{C}R_0$.
\end{lemma}
\begin{proof}
	Recall from \eqref{eq:def_of_last_interp_radius} and \eqref{eq:interpolation_constants} that
	\begin{equation}
		R_1=e^{(1+\delta)\frac{C_1}{C_0}}R_0,\quad C_0=\min_{s\in[0,1],z\in\bR^{2n}}\frac{\left|\phi^1_{\mathcal{Q}^s}z-z\right|}{|z|}>0,\quad C_1=\left\|\partial_s\nabla^2_z\mathcal Q\right\|_{L^\infty}.
	\end{equation}
	The easiest is $C_1$. By definition of $R^s$ \eqref{eq:special_short_unitary_nonres_homotp_firstpart},
	\begin{equation}\label{eq:second_interp_radius_unif_bded_C_1}
		C_1=\left\|\partial_s\nabla^2_z\mathcal Q\right\|_{L^\infty}=\max_{r=1,\dots,n}\frac{\left|\partial_s\beta^s_r\right|}{2}=\max_{r=1,\dots,n}\frac{\left|\beta_r\right|}{2}<\pi.
	\end{equation}
	To estimate $C_0$, recall from equations \eqref{eq:reso_prox_cst_homotopy} and \eqref{eq:interp_cst_C0_est_below_reso_prox} that
	\begin{equation}
		\left|\phi^1_{\mathcal{Q}^s}z-z\right|\geq\left|\left(\phi^1_{\mathcal{Q}^s}-\bid\right)^{-1}\right|^{-1}|z|=\nu_\infty(s)|z|\implies C_0\geq\min_{s\in[0,1]}\nu_\infty(s).
	\end{equation}
	By construction of the path $R^s$ it follows that
	\begin{equation}
		\nu_\infty(s)\geq \min\left\{\nu_\infty(0),\nu_\infty(1)\right\}.
	\end{equation}
	But since $k=p_{j+m}$ and $l=p_j$ are uniformly non-resonant prime iterations, by item \ref{item:mainprop_ndg_at_infty} in point \ref{item:mainprop_unif_nonres_prime_iterates}
	\begin{equation}
		\nu_\infty(0)=\left|\left(\upphi_\infty^k-\bid\right)^{-1}\right|^{-1}\geq\nu_\infty,\quad \nu_\infty(1)=\left|\left(\upphi_\infty^l-\bid\right)^{-1}\right|^{-1}\geq\nu_\infty.
	\end{equation}
	In particular $C_0\geq \nu_\infty>0$. To finish just set 
	\begin{equation}
		\overline{C}=\max\left\{e^{(1+\delta)\frac{\pi}{\nu_\infty}},2\right\}.
	\end{equation}
	Since $\nu_\infty$ does not depend on $j$ and $m$, the same is true for $\overline{C}$.
\end{proof}

We are finally ready to define $F_{j,m}$. Recall the definition of the Hamiltonian $K$ in equation \eqref{eq:def_of_interpolation_at_infty}. Choose $\sqrt{R_0}=2(\tilde R_{k,l}+2)>\sqrt{2R_a}=\sqrt{2}(\tilde R_{k,l}+1)$ and define
\begin{equation}\label{eq:def_of_Fjm}
	F_{j,m}(t,z)=K_t(z)+\widetilde{h^{p_{j+m}\ominus p_j}_t}(z)=0\wedge Q^{\times p_j}+f_{j,m},\quad f_{j,m}=K_t(z)-0\wedge Q^{\times p_j}+\widetilde{h^{p_{j+m}\ominus p_j}_t}(z).
\end{equation}
In particular
\begin{equation}\label{eq:Fjm_cpt_pert}
	\supp f_{j,m}=B_{\sqrt{R_1}}(0)=B_{j,m},\quad \sqrt{R_1}\leq\sqrt{2\overline{C}}\left(\tilde{R}_{p_{j+m},p_j}+2\right).
\end{equation}
We must prove that $F_{j,m}$ has the properties claimed in item \ref{item:mainprop_Fjm_dynquants} of point \ref{item:mainprop_H_and_Gj} in proposition \ref{prop:auxiliary_prop}.
\begin{lemma}\label{lem:Fjm_works}
	It holds that $\Fix\phi^1_{F_{j,m}}=\Fix\upphi^{p_{j+m}}$. If $z$ is any $k$-fold iterated fixed point of $\upphi$, seen as a fixed point of $\phi^1_{F_{j,m}}$, we have
	\begin{equation}
		\begin{split}
			&\cA_{F_{j,m}}(z)=p_{j+m}\cA_H(z),\\
			&\CZ\left(z,F_{j,m}\right)=\CZ\left(z,H^{\times p_{j+m}}\right)-\sigma_{j,m}.
		\end{split}
	\end{equation}
	If $z_0\in\Fix\upphi$ is isolated as a fixed point of $\phi^1_{F_{j,m}}$, then
	\begin{equation}
		\HFloc_*\left(F_{j,m},z_0\right)\cong\HFloc_{*+\sigma}\left(H^{\times p_{j+m}},z_0\right).
	\end{equation}
\end{lemma}
\begin{proof}
	Notice that $F_{j,m}$ equals $H^{p_{j+m}\ominus p_{j}}$ on a ball containing all the fixed points of its time-1 map, and outside that ball it has no fixed points. Therefore
	\begin{equation}
		\Fix\phi^1_{F_{j,m}}=\Fix\phi^1_{H^{p_{j+m}\ominus p_j}}=\Fix\upphi^{p_{j+m}}.
	\end{equation}
	We have already calculated the action and index of the iterated fixed points in lemma \ref{lem:relooped_hamilt_properties}. Set $p_{j+m}=k$ and $p_j=l$ as usual. Let $z_0$ be an isolated fixed point of $\phi^1_{F_{j,m}}$. By the same token,
	\begin{equation}
		\HFloc_*\left(F_{j,m},z_0\right)\cong\HFloc_{*}\left(H^{p_{j+m}\ominus p_j},z_0\right).
	\end{equation}
	The flows of $H^{k\ominus l}$ and $\overline{P^{k,l}}\#H^{\times k}$ have the same end-points at time 1 and are homotopic with fixed end-points in $\Ham$. Therefore they are related by a contractible loop in $\Ham$. We can apply lemma \ref{lem:HFloc_invariant_contractible_loop_compos} and obtain
	\begin{equation}
		\HFloc_{*}\left(H^{k\ominus l},z_0\right)\cong\HFloc_{*}\left(\overline{P^{k,l}}\#H^{\times k},z_0\right).
	\end{equation}
	Lemma \ref{lem:HFloc_action_of_linear_loops_shifts_grading} gives us
	\begin{equation}
		\HFloc_{*}\left(\overline{P^{k,l}}\#H^{\times k},z_0\right)\cong\HFloc_{*+\sigma}\left(H^{\times k},z_0\right)
	\end{equation}
	and this concludes the proof.
\end{proof}

\subsection{Point \protect\ref{item:mainprop_H_and_Gj}, construction of \texorpdfstring{$G_j$}{Gj}}
The Hamiltonian $G_j$ is obtained by truncation of $0\wedge H^{\times p_j}$ as in proposition \ref{prop:nonresonant_truncation}. The result is a Hamiltonian of the form
\begin{equation}
	G_j=0\wedge Q^{\times p_j}+\widetilde{0\wedge h^{\times p_j}}=0\wedge Q^{\times p_j}+g_j
\end{equation}
with $g_j$ compactly supported in a ball of radius $\sqrt{R'_a}=\tilde R_{p_j}+1$ where $\tilde R_{p_j}$ is defined as in equation \eqref{eq:def_of_tildeR_1}, determined by the tails of $0\wedge h^{\times p_j}$. Inspecting \eqref{eq:tails_of_relooped}, $\sigma^{k\ominus l}_1$ is the maximum between the $\sigma_1$-tail of $0\wedge h^{\times p_j}$ and some other function. The same is true for $\sigma_0^{k\ominus l}$. In particular, we will always have that $\tilde R_{p_j}\leq \tilde R_{p_{j+m},p_j}$. We are thus allowed to use the same truncation radius \emph{also} for $0\wedge H^{\times l}$, without creating additional fixed points in the interpolation region. This makes the action shift constant estimates simpler.

Notice that since we chose $\sqrt{R'_a}=\tilde{R}_{p_{j+m},p_j}+1<\sqrt{2\overline{C}}(\tilde{R}_{p_{j+m},p_j}+2)$,
\begin{equation}\label{eq:Gj_cpct_pert}
	\supp g_j\subset B_{\sqrt{R'_a}}(0)\subset\supp f_{j,m}\subset B_{j,m}.
\end{equation}

The fact that $G_j$ has the properties claimed in item \ref{item:mainprop_Gj_dynquants} of point \ref{item:mainprop_H_and_Gj} in proposition \ref{prop:auxiliary_prop} follows from the same reasoning we used to prove the analogous statement for $F_{j,m}$ in lemma \ref{lem:Fjm_works}.

\subsection{Point \protect\ref{item:mainprop_H_and_Gj}, item \protect\ref{item:mainprop_action_shift_estimate}}

We constructed Hamiltonians $F=F_{j,m}$ and $G=G_j$, which are compactly supported perturbations of \emph{the same quadratic form at infinity} $0\wedge Q^{\times l}$. Therefore they are at finite uniform distance, and this is what we have to estimate here. The goal is to show that
\begin{equation}
	\left\|F_{j,m}-G_j\right\|_{L^\infty}=o\left(p_{j+m}\right)\text{ as }j\to\infty.
\end{equation}

The truncation and interpolation radii chosen are:
\begin{equation}\label{eq:all_interp_radii}
	\sqrt{R_a}=\tilde{R}_{p_{j+m},p_j}+1,\quad R_b=2R_a,\quad \sqrt{R_0}=2\tilde{R}_{p_{j+m},p_j}+4,\quad R_1\leq\overline{C}R_0.
\end{equation}
Recall the definition of the truncated Hamiltonian \eqref{eq:truncated_hamilt} and the calculation of the re-indexed Hamiltonian \eqref{eq:full_preinterp_F}. Since we use the same truncation radii in the construction of $F_{j,m}$ and $G_j$, we use the same truncation step function $\rho$ to define them. We calculate
\begin{equation}\label{eq:hamdiff}
	\begin{multlined}
		F-G=f-g=\\
		=K-0\wedge Q^{\times l}+\rho\cdot \left[h^{k\ominus l}- 0\wedge h^{\times l}\right]=\\
		= R^{\chi\left(|z|^2\right)}\wedge 0+\rho\cdot\left[\left(h^{\times(k-l)}\circ\phi^\cdot_{P^{k,l}}\right)\wedge h^{\times l}-0\wedge h^{\times l}\right]=\\
		=\left[ R^{\chi\left(|z|^2\right)}+\rho\cdot\left(h^{\times(k-l)}\circ\phi^\cdot_{P^{k,l}}\right)\right]\wedge 0
	\end{multlined}
\end{equation}
We used that
\begin{equation}\label{eq:hamdiff_fourth_col}
	K-0\wedge Q^{\times l}=R^{\chi\left(|z|^2\right)}\wedge Q^{\times l}-0\wedge Q^{\times l}=R^{\chi\left(|z|^2\right)}\wedge 0
\end{equation}
with $\chi$ the special step function constructed in the proof of the proposition \ref{prop:nonresonant_interpolation_quadratic_forms}. This function is compactly supported on a ball of radius $\sqrt{R_1}$, and we can estimate
\begin{equation}
	\left\|R^{\chi}\wedge0\right\|_{L^\infty}=\left\|R^{\chi}\right\|_{L^\infty}\leq \pi R_1=2\overline{C}\left(\tilde{R}_{p_{j+m},p_j}+2\right).
\end{equation}
The second term can be estimated as
\begin{equation}
\left\|\rho\cdot\left(h^{\times(k-l)}\circ\phi^\cdot_{P^{k,l}}\right)\right\|_{L^\infty\left(S^1\times\bR^{2n}\right)}\leq\left\|h^{\times(k-l)}\right\|_{L^\infty\left(S^1\times B_{\sqrt{R_1}}\right)}=(k-l)\left\|h\right\|_{L^\infty\left(S^1\times B_{j,m}\right)}
\end{equation}
The uniform distance can thus be estimated as
\begin{equation}\label{eq:hamdiff_dist_est}
	\left\|F_{j,m}-G_j\right\|_{L^\infty}\leq \pi R_1+(p_{j+m}-p_j)\left\|h\right\|_{L^\infty\left(S^1\times B_{j,m}\right)}.
\end{equation}
Recall that since $H$ is rapidly asymptotically quadratic, $h$ decays to zero. In particular, it is bounded, so we may set $\|h\|_{L^\infty}=c_0$. Moreover, in lemma \ref{lem:estimate_truncation_radius_k} we found that $\tilde R_{p_{j+m},p_j}=o\left(p_{j+m}^{\frac12}\right)$ as $j\to\infty$. Finally, in proposition \ref{prop:uniform_nonresonant_iterations_of_linear_symplectic_map} we saw that $p_{j+m}-p_j=o(p_j)$ as $j\to\infty$. Using these facts, we can estimate
\begin{equation}
	\begin{multlined}
		\left\|F_{j,m}-G_j\right\|_{L^\infty}\leq\\
		\leq 2\pi\overline{C}\left(\tilde R_{p_{j+m},p_j}+2\right)^2+(p_{j+m}-p_j)\left\|h\right\|_{L^\infty\left(S^1\times B_{2\overline{C}\left(\tilde R_{p_{j+m},p_j}+2\right)}\right)}\leq\\
		\leq 2\pi\overline{C}\left(\tilde R_{p_{j+m},p_j}+2\right)^2+(p_{j+m}-p_j)\left\|h\right\|_{L^\infty\left(S^1\times\bR^{2n}\right)}=\\
		=2\pi\overline{C}\left(\tilde R_{p_{j+m},p_j}+2\right)^2+(p_{j+m}-p_j)c_0=\\
		=o\left(\sqrt{p_{j+m}}\right)^2+o\left(p_{j+m}\right)=o\left(p_{j+m}\right).
	\end{multlined}
\end{equation}
This concludes the proof of proposition \ref{prop:auxiliary_prop}.

\section{Floer Homology of Asymptotically Linear Hamiltonian Systems}
\label{sec:floer_homology_of_asymptotically_linear_hamiltonian_systems}

Floer homology, in its most elementary form, detects periodic orbits of Hamiltonian systems, and ties their existence to symplectic topology. It was introduced by Floer \cite{Floer1987-MorseTheorySymplFixedPts,Floer1988-MorseTheoryLagrInters,Floer1989_SympFixedPtsHolSph} in order to solve the Arnol'd conjectures on fixed points of Hamiltonian diffeomorphisms and intersections of Lagrangian submanifolds. Floer homology resembles formally the Morse homology of a smooth function (see e.g. \cite[Part I]{AudinDamian2014_MorseFloer}), where the smooth function in question is the classical Hamiltonian action functional on loop space. It is the homology of a chain complex which is generated by periodic orbits of fixed period and whose differential counts certain gradient descent-like trajectories between them, called Floer trajectories. We give a brief overview of this construction in section \ref{sub:floer_chain_complex_and_floer_homology}.

The construction of Floer homology for asymptotically quadratic Hamiltonians follows the standard recipe, which can be found for example in \cite[Part II]{AudinDamian2014_MorseFloer}, with some precautions. Since the target manifold for the Floer trajectories in this case is $\bR^{2n}$, one must find some mechanism which guarantees compactness of the moduli spaces of Floer trajectories, which is necessary for the definition of the Floer chain complexes and the morphisms between them. This issue is addressed in section \ref{sub:uniform_Linfty_estimates_for_the_floer_equation}. There it is shown that the hypothesis of non-resonance at infinity is necessary for compactness. In fact, non-resonance at infinity implies that the corresponding Hamiltonian action functional is Palais-Smale \cite[Section 3.3]{HoferZehnder2011_SymplecticInvariants} (see also lemma \ref{lem:ndg_at_infty_a_priori_bound_L2}).

The power of Hamiltonian Floer homology relies substantially on its invariance properties. In the case of asymptotically quadratic Hamiltonians, it is useful to introduce the concept of ``non-resonant homotopy'': two asymptotically quadratic Hamiltonians $F_0$ and $F_1$, which are non-resonant at infinity, are said to be \emph{non-resonant homotopic} if there is a path $[0,1]\ni s\mapsto F_s$ of asymptotically quadratic Hamiltonians for which $F_s$ is non-resonant at infinity for all $s\in[0,1]$. If $F_0$ and $F_1$ are non-resonant homotopic, then in \cite{MyPhDThesis} it is shown that there is a chain morphism which induces an isomorphism between the Floer homologies of $F_0$ and $F_1$. The morphism obtained is usually called a \emph{continuation morphism}. Moreover all non-resonant homotopies between two asymptotically quadratic Hamiltonians induce the same continuation morphism. In other words, Floer homology depends only on the \emph{non-resonant homotopy class} of the asymptotically quadratic Hamiltonian.

Using continuation morphisms one may recover the seminal results, mentioned in the introduction, of Amann, Conley and Zehnder \cite{AmannZehnder1980-FinDimRed,AmannZehnder1980-PeriodicSolutionsAsyLin,ConleyZehnder1984_CZ} on the existence and multiplicity of periodic orbits for asymptotically linear Hamiltonian systems. The idea is that an asymptotically quadratic Hamiltonian is non-resonant homotopic to its quadratic Hamiltonian at infinity. The Floer homology of a non-degenerate quadratic Hamiltonian is computed easily from its definition: there is only one generator corresponding to the fixed point at the origin. This implies that the ALHD must have at least one fixed point. Multiple fixed points can be obtained under additional hypotheses on the Conley-Zehnder index of the periodic orbit found.

The existence of a homologically visible twist fixed point is enough to deduce the existence of one periodic orbit with high period, by considering a high enough iterate and then arguing by continuation to the resulting quadratic Hamiltonian at infinity. This sort of argument was already understood by Amann, Conley and Zehnder. We are interested in finding periodic orbits with arbitrarily high period, which can't be found via simple ``global'' continuation. To detect them, in the proof of the Poincaré-Birkhoff theorem we equipped Floer homology with further structure.

We can filter the chain complex using the Hamiltonian action functional, namely, we consider the sub-chain complex generated by orbits with action value below some fixed constant (see section \ref{sub:filtered_floer_homology}). The resulting filtered homology is called \emph{filtered Floer homology} and the filtration is referred to as the \emph{action filtration}. This is a much finer invariant of the Hamiltonian system: at the level of filtered homology, homotopies of Hamiltonians give rise to continuation morphisms only up to a shift in the action filtration, and this \emph{action shift constant} depends in general on the continuation chosen (see section \ref{sub:action_shift_of_continuation_morphisms}). Since we are interested in covering also Hamiltonian systems with possibly degenerate periodic orbits, we have to explain how Floer homology can be extended to these systems by approximation (section \ref{ssub:degenerate_hamiltonians}).  To better understand filtered Floer homology of a degenerate Hamiltonian, we will introduce the local Floer homology of its isolated periodic orbits (section \ref{sub:local_floer_homology}).

\subsection{Compactness for periodic orbits}
\label{sub:compactness_for_periodic_orbits}

Here we show that the action functional associated to an asymptotically quadratic Hamiltonian which is non-resonant at infinity has nice compactness properties which are crucial for the variational study of the periodic orbit problem. This fact is well known and the proof follows the seminal paper \cite{ConleyZehnder1984_CZ}.

First we recall the classical variational set-up for the periodic orbit problem in Hamiltonian systems on $\bR^{2n}$. Let $H\in C^\infty(S^1\times\bR^{2n})$. We consider the Hilbert space $L^2\left(S^1,\bR^{2n}\right)=L^2$ and its dense subspace $W^{1,2}\left(S^1,\bR^{2n}\right)=W^{1,2}$. Define the action functional on $W^{1,2}$ by
\begin{equation}\label{eq:def_of_action_functional}
	\cA_H(x)=\int_{S^1}\frac12\left\langle x(t),J_0\dot x(t)\right\rangle-H_t(x(t))\ dt.
\end{equation}
Suitable growth bounds on the Hamiltonian guarantee that the action functional is a smooth functional \cite[pg. 76]{Abbondandolo2001_MorseHamilt}.
The unregularized gradient of $\cA_H$ is defined by the identity
\begin{equation*}
	\left\langle\nabla_{L^2}\cA_H(x),\xi\right\rangle_{L^2}=d\cA_H|_x\xi\quad\forall\xi\in W^{1,2}.
\end{equation*}
A straightforward calculation shows that
\begin{equation*}
	\nabla_{L^2}\cA_H(x)=J_0\dot x-\nabla H_t\circ x.
\end{equation*}
It follows that the critical points of the action functional are precisely the 1-periodic orbits of the Hamiltonian system associated to $H$.

The central property of the action functional which we will need is the following weak form of the Palais-Smale condition.
\begin{lemma}\label{lem:ndg_at_infty_a_priori_bound_L2}
	Let $H\in C^\infty\left(S^1\times\bR^{2n},\bR\right)$ be a asymptotically quadratic Hamiltonian. If $H$ is non-resonant at infinity, then there exist constants $\nu,\delta>0$ such that
	\begin{equation}\label{eq:ndg_at_infty_implies_L2_properness_nablaA}
		\left\|\nabla_{L^2}\cA_H(x)\right\|_{L^2}=\left\|\dot x-X_H\circ x\right\|_{L^2}\geq\frac\nu2\|x\|_{L^2}-\delta.
	\end{equation}
\end{lemma}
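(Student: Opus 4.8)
The statement asks for a coercivity-type estimate for the $L^2$-gradient of the action functional. The plan is to argue by contradiction: suppose no such constants $\nu,\delta$ exist. Then for every $k\in\bN$ there is a loop $x_k\in W^{1,2}(S^1,\bR^{2n})$ with
\[
	\|\dot x_k - X_H\circ x_k\|_{L^2} < \tfrac{1}{k}\|x_k\|_{L^2} - \tfrac{1}{k}.
\]
In particular the right-hand side must be positive, so $\|x_k\|_{L^2} > 1$, hence $\|x_k\|_{L^2}\to\infty$ is not forced but $\|x_k\|_{L^2}$ is bounded below; rescaling, set $y_k = x_k/\|x_k\|_{L^2}$, so $\|y_k\|_{L^2}=1$ and
\[
	\left\|\dot y_k - \tfrac{1}{\|x_k\|_{L^2}}X_H(x_k)\right\|_{L^2} \to 0.
\]

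**The key step: passing to the linear system at infinity.** Write $H = Q + h$ with $\nabla H_t(z) = A_t z + \nabla h_t(z)$ and $\nabla h_t(z) = o(|z|)$. Then $X_H(x_k) = J_0 A_t x_k + J_0\nabla h_t(x_k)$, and the sublinear growth of $\nabla h$ gives $\|\nabla h_t(x_k)\|_{L^2}/\|x_k\|_{L^2}\to 0$ — this requires a little care when $\|x_k\|_{L^2}$ stays bounded, in which case one splits the integral over $\{|x_k|\le R\}$ and $\{|x_k|>R\}$ and uses that $\nabla h$ is continuous hence bounded on compacts and that on the far region $|\nabla h_t(z)|\le \epsilon|z|$; if instead $\|x_k\|_{L^2}\to\infty$ the estimate is cleaner. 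Either way,
\[
	\left\|\dot y_k - J_0 A_t\, y_k\right\|_{L^2} \longrightarrow 0.
\]
Now $y_k$ is bounded in $W^{1,2}$ (since $\dot y_k$ is $L^2$-bounded by the above and $\|y_k\|_{L^2}=1$), so after passing to a subsequence $y_k \rightharpoonup y$ weakly in $W^{1,2}$ and strongly in $L^2$ by Rellich; thus $\|y\|_{L^2}=1$ and $y$ solves $\dot y = J_0 A_t y$ in the weak (hence, by elliptic regularity/ODE theory, classical) sense. This means $y(t) = \phi^t_Q(y(0))$ with $y(0)\ne 0$ and $y(1)=y(0)$, i.e. $1\in\operatorname{spec}\phi^1_Q$, contradicting non-degeneracy at infinity.

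**The main obstacle.** The delicate point is the uniformity in the passage $\|\nabla h_t(x_k)\|_{L^2} = o(\|x_k\|_{L^2})$ when one does not know a priori whether $\|x_k\|_{L^2}\to\infty$. The clean way to handle this is to treat the two alternatives separately. If a subsequence has $\|x_k\|_{L^2}$ bounded, then since $\|\dot x_k - X_H(x_k)\|_{L^2}$ is bounded too, $x_k$ is bounded in $W^{1,2}$, hence has an $L^2$-convergent subsequence $x_k\to x$ with $\|\dot x_k - X_H(x_k)\|_{L^2}\to 0$ and $\|x_k\|_{L^2}>1$; the limit $x$ is then a genuine $1$-periodic orbit with $\|x\|_{L^2}\ge 1$, and its uniqueness is not the issue — rather one notes that in this bounded case the defect inequality $\|\dot x_k - X_H(x_k)\|_{L^2} < \frac1k\|x_k\|_{L^2} - \frac1k \le \frac1k(\sup_k\|x_k\|_{L^2})$ still forces the limit to be a critical point, which is fine and not contradictory, so in fact one must rule this branch out differently: observe the claimed inequality \eqref{eq:ndg_at_infty_implies_L2_properness_nablaA} only fails if the ratio $(\|\dot x_k - X_H(x_k)\|_{L^2} + \delta)/\|x_k\|_{L^2}$ can be made small for every $\delta$; the correct contradiction hypothesis is that for every $\nu,\delta$ there is $x$ with $\|\dot x - X_H(x)\|_{L^2} < \frac\nu2\|x\|_{L^2} - \delta$, and choosing $\nu=\delta=1/k$ we get $\|x_k\|_{L^2}\to\infty$ automatically because the right side must be positive and $\to 0$ would be incompatible unless $\|x_k\|_{L^2}$ itself blows up. With $\|x_k\|_{L^2}\to\infty$ secured, the rescaling argument above goes through and the only real work left is the sublinear-growth estimate on the renormalized perturbation term, which is a standard dominated-convergence argument split over a compact region and its complement.
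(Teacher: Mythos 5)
Your overall strategy --- argue by contradiction, rescale a minimizing sequence, kill the sublinear perturbation, and land on a nontrivial $1$-periodic solution of the linear system at infinity --- is viable and genuinely different from the paper's proof, which is direct: the paper invokes the invertibility of $D_A=\frac{d}{dt}+J_0A\colon W^{1,2}\to L^2$ (cited from Ekeland), sets $\nu=1/\|D_A^{-1}\|_{\op}$, and combines $\|D_Ax\|_{L^2}\geq\nu\|x\|_{L^2}$ with the elementary estimate $\|\nabla h\circ x\|_{L^2}\leq\epsilon\|x\|_{L^2}+\delta$. Your compactness argument essentially reproves this lower bound for $D_A$ rather than quoting it, which is a legitimate trade-off.

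However, there is a genuine gap in how you set up the contradiction, and your attempted repair rests on a false claim. The negation of the lemma is: for \emph{every} pair $\nu,\delta>0$ there is an $x$ violating the inequality. You are free to choose how $\nu_k$ and $\delta_k$ depend on $k$, and the whole point is to choose them so that $\|x_k\|_{L^2}\to\infty$ is forced. With your choice $\nu_k=\delta_k=1/k$, positivity of the right-hand side only gives $\|x_k\|_{L^2}>2$; a bounded subsequence is perfectly possible, and as you yourself observe, in that branch the limit is just an honest $1$-periodic orbit --- no contradiction, since $H$ may well have such orbits. Your closing assertion that $\|x_k\|_{L^2}\to\infty$ holds ``automatically because the right side must be positive'' is simply wrong: positivity of $\frac{1}{2k}\|x_k\|_{L^2}-\frac1k$ is compatible with $\|x_k\|_{L^2}$ bounded. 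The fix is one line but you did not find it: take $\delta_k\to\infty$ (e.g.\ $\nu_k=1/k$, $\delta_k=k$), so that $0\leq\|\dot x_k-X_H\circ x_k\|_{L^2}<\frac{1}{2k}\|x_k\|_{L^2}-k$ forces $\|x_k\|_{L^2}>2k^2\to\infty$. With that correction the blow-up argument you describe (rescaling, the split of $\|\nabla h\circ x_k\|_{L^2}$ over $\{|x_k|\leq R_\epsilon\}$ and its complement, Rellich, and non-degeneracy of $\phi^1_Q$) goes through; note also that once you know $\|D_Ay_k\|_{L^2}\to0$ with $\|y_k\|_{L^2}=1$ you could finish immediately by quoting the invertibility of $D_A$, as the paper does, instead of extracting a weak limit.
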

\begin{proof}
	Let $H=Q+h$, where $Q_t(z)=\frac12\left\langle A_tz,z\right\rangle$ for $A\colon S^1\to\Sym(2n)$ a smooth path of symmetric matrices.
	With a small abuse of notation denote by $J_0A$ the operator on $L^2$ given by $x\mapsto J_0A_tx(t)$. Since $H$ is non-resonant at infinity, the operator
	\begin{equation}
		D_A=\frac{d}{dt}+J_0A\colon W^{1,2}\to L^2
	\end{equation}
	is invertible (see e.g. Ekeland \cite[Proposition 2, \S III.1]{Ekeland1990_ConvexityMethods}). Set
	\begin{equation}
		\nu=\frac{1}{\left\|D_A^{-1}\right\|_\op}
	\end{equation}
	We obtain the first estimate
	\begin{equation}\label{eq:ndg_lin_sys_invert_estimate}
		\left\|D_Ax\right\|_{L^2}\geq\nu\|x\|_{L^2}.
	\end{equation}
	Now notice that $\dot x-X_H\circ x-J_0D_Ax=-\nabla H_t\circ x+Ax$. Therefore
	\begin{equation}\label{eq:estimate_L2_properness_nablaA}
		\begin{split}
			\left\|\dot x-X_H\circ x\right\|_{L^2}&=\left\|J_0D_Ax-\left(J_0D_Ax-\dot x+X_H\circ x\right)\right\|_{L^2}\geq\\
			&\geq\left\|D_Ax\right\|_{L^2}-\left\|\nabla H\circ x-Ax\right\|_{L^2}=\left\|D_Ax\right\|_{L^2}-\left\|\nabla h\circ x\right\|_{L^2}.
		\end{split}
	\end{equation}
	Now, since $|\nabla h_t(z)|=o(|z|)$, for every $\epsilon>0$ there exists a $\delta>0$ such that
	\begin{equation}
		\left\|\nabla h\circ x\right\|_{L^2(S^1)}\leq \epsilon\|x\|_{L^2}+\delta.
	\end{equation}
	Set $\epsilon=\frac\nu2$, and combine this estimate with \eqref{eq:ndg_lin_sys_invert_estimate} and \eqref{eq:estimate_L2_properness_nablaA} to get
	\begin{equation*}
		\left\|\dot x-X_H\circ x\right\|_{L^2}\geq \frac\nu2\|x\|_{L^2}-\delta
	\end{equation*}
	as claimed.
\end{proof}
A nice consequence of this lemma is a refinement of lemma \ref{lem:1-per_orbits_of_nondeg_at_infty_asyquad_are_unif_Linfty_bounded} from fixed points to 1-periodic orbits.
\begin{lemma}\label{lem:Linfty_estimate_perorb}
	Let $H\in C^\infty\left(S^1\times\bR^{2n},\bR\right)$ be a asymptotically quadratic Hamiltonian with non-degenerate quadratic Hamiltonian at infinity. There exists a constant $R>0$ such that if $x\colon S^1\to\bR^{2n}$ is a 1-periodic orbit of $H$ then
	\begin{equation}\label{eq:Linfty_estimate_perorb}
		\|x\|_{L^\infty}\leq R.
	\end{equation}
\end{lemma}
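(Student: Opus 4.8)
The plan is to deduce the uniform $L^\infty$ bound from the a priori $L^2$ bound already contained in Lemma \ref{lem:ndg_at_infty_a_priori_bound_L2}, by using the Hamilton equation to control $\dot x$ and then a one-dimensional embedding estimate.

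First, a $1$-periodic orbit $x$ of $H$ is precisely a critical point of $\cA_H$ on $W^{1,2}$, so $\nabla_{L^2}\cA_H(x) = 0$. Feeding this into the inequality \eqref{eq:ndg_at_infty_implies_L2_properness_nablaA} forces $0 \ge \frac{\nu}{2}\|x\|_{L^2} - \delta$, hence the a priori estimate $\|x\|_{L^2} \le 2\delta/\nu =: K$, with $K$ depending only on $H$ through the constants of Lemma \ref{lem:ndg_at_infty_a_priori_bound_L2}.

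Next I would upgrade this to a $C^0$ bound. Because $|\nabla H_t(z) - A_t z| = o(|z|)$ \emph{uniformly in $t$}, there is an $R_0 > 0$ with $|\nabla H_t(z)| \le (\|A\|_{L^\infty}+1)|z|$ whenever $|z| \ge R_0$; combined with the finite supremum of $|\nabla H|$ over the compact set $S^1\times\{|z|\le R_0\}$, this yields a global linear bound $|\nabla H_t(z)| \le c_1 + c_2|z|$ on $S^1\times\bR^{2n}$. Since $|X_H(t,z)| = |\nabla H_t(z)|$ (the Hamiltonian vector field differs from the gradient by the orthogonal matrix $J_0$), a $1$-periodic orbit satisfies $\|\dot x\|_{L^2} = \|X_H\circ x\|_{L^2} \le c_1 + c_2\|x\|_{L^2} \le c_1 + c_2 K =: K_1$. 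Now choose $t_0\in S^1$ with $|x(t_0)| \le \|x\|_{L^2}\le K$ — possible since the mean of $|x|^2$ over the unit-length circle is at most $K^2$ — and estimate, for every $t$, by Cauchy--Schwarz on $S^1$:
\begin{equation}
	|x(t)| \le |x(t_0)| + \int_{S^1}|\dot x|\,dt \le K + \|\dot x\|_{L^2} \le K + K_1 .
\end{equation}
Thus $R := K + K_1$ does the job. Equivalently, one could simply invoke the continuous embedding $W^{1,2}(S^1)\hookrightarrow C^0(S^1)$ applied to the bound $\|x\|_{W^{1,2}} \le K + K_1$.

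I do not expect a genuine obstacle: the analytic substance is entirely in Lemma \ref{lem:ndg_at_infty_a_priori_bound_L2}, which rests on the invertibility of $D_A = \frac{d}{dt} + J_0 A$ furnished by non-degeneracy at infinity. The only step requiring a little care is extracting the \emph{uniform-in-$t$} linear growth of $\nabla H$ from Definition \ref{def:asymptotically_quadratic_hamiltonians}, which is exactly where the uniformity over $S^1$ in \eqref{eq:def_of_wk_asy_lin_ham_VF} enters.
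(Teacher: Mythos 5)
Your proposal is correct and follows essentially the same route as the paper: both extract the a priori bound $\|x\|_{L^2}\le 2\delta/\nu$ from Lemma \ref{lem:ndg_at_infty_a_priori_bound_L2} (using that a $1$-periodic orbit is a zero of $\nabla_{L^2}\cA_H$) and then upgrade it to an $L^\infty$ bound via the one-dimensional Sobolev embedding. The only, harmless, difference is in the upgrade step: the paper bounds $\|x\|_{W^{1,2}}$ by applying $D_A^{-1}$ to $D_Ax=J_0\left(A_tx-\nabla H_t\circ x\right)$, whereas you bound $\|\dot x\|_{L^2}$ directly from the Hamilton equation together with the uniform linear growth of $\nabla H$ — slightly more elementary, since it invokes the invertibility of $D_A$ only once.
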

\begin{proof}
	Let $x\colon S^1\to\bR^{2n}$ be a 1-periodic orbit of $X_H$.
	From the estimate \eqref{eq:ndg_at_infty_implies_L2_properness_nablaA} applied to $x\in W^{1,2}$, we obtain
	\begin{equation}\label{eq:a_priori_L2_estimate_perorb}
		\|x\|_{L^2}\leq \frac{2\delta}{\nu}=C_0.
	\end{equation}
	Similarly as the previous lemma, notice that
	\begin{equation*}
		D_Ax=J_0\left(A_tx-\nabla H_t\circ x\right).
	\end{equation*}
	Using that $|\nabla h|=o(|z|)$ we obtain
	\begin{equation*}
		\left\|D_Ax\right\|_{L^2}=\left\|\nabla H\circ x-Ax\right\|_{L^2}\leq \|x\|_{L^2}+C_1
	\end{equation*}
	where $C_1>0$ depends only on $H$. Now, $D_A\colon W^{1,2}\to L^2$ is an invertible operator. Moreover any 1-periodic (weak) solution is a bounded function, since $W^{1,2}(S^1,\bR^{2n})$ embeds in $L^\infty(S^1,\bR^{2n})$ by the Sobolev inequalities. Therefore we get
	\begin{equation}\label{eq:Linfty_estimate}
		\begin{split}
			\|x\|_{L^\infty}&\leq C_2\|x\|_{W^{1,2}}\leq C_2\left\|D_A^{-1}\right\|_\op\left\|D_Ax\right\|_{L^2}\leq\\
			&\leq C_3\left(\|x\|_{L^2}+C_1\right)\leq C_3(C_0+C_1)=:R
		\end{split}
	\end{equation}
	where $R$ now depends only on $H$.
\end{proof}
\begin{rmk}
	The example of a quadratic Hamiltonian generating a degenerate symplectic linear map shows that compactness is not to be expected in absence of some non-degeneracy condition.
\end{rmk}

\subsection{Uniform \texorpdfstring{$L^\infty$}{Linfty}-estimates for the Floer equation}
\label{sub:uniform_Linfty_estimates_for_the_floer_equation}

We derive the uniform $L^\infty$ bounds in the most general case we need, that of a Floer cylinder solving the continuation Floer equation between asymptotically quadratic Hamiltonians with non-degenerate quadratic forms at infinity. The proof of these estimates follows Abbondandolo and Kang's more general treatment in \cite[Section 3]{AbbondandoloKang2022_SHClarke}.

\begin{defn}\label{def:asyquad_continuation}
	Consider a smooth function $\cH\colon\bR\times S^1\times \bR^{2n}\to\bR$, $\cH(s,t,z)=\cH^s_t(z)$, with the following properties.
	\begin{enumerate}
		\item The function $\cH$ depends on the $s\in\bR$ coordinate only in a compact interval $\cS\subset\bR$. We set $H^0_t(z)=\cH^s_t(z)$ for some (and then all) $s<\min \cS$ and $H^1_t(z)=\cH^s_t(z)$ for some (and then all) $s>\max \cS$.
		\item There exists a smooth $\bA=\bA^s_t\colon\bR\times S^1\to\Sym(2n)$ such that
			\begin{equation*}
				\sup_{(s,t)\in\bR\times S^1}\left|\nabla\cH^s_t(z)-\bA^s_tz\right|=o(|z|)\quad\text{as }|z|\to\infty.
			\end{equation*}
			Also $\bA$ must depend on $s\in\bR$ only in the interval $\cS\subset\bR$. We set $A^0_t=\bA^s_t$ for some $s<\min \cS$ and $A^1_t=\bA^s_t$ for some $s>\max \cS$.
		\item The asymptotically quadratic Hamiltonians $H^i$ are non-resonant at infinity.
	\end{enumerate}
	Such an $\cH$ is called a \emph{asymptotically quadratic continuation} between asymptotically quadratic Hamiltonians $H^0$ and $H^1$.
\end{defn}
For example, a non-resonant homotopy gives rise naturally to a asymptotically quadratic continuation. But notice that for the time being, if $\cH=\cH^s$ is a asymptotically quadratic continuation, we are \emph{not} assuming that $\cH^s$ is non-resonant at infinity for every $s\in\bR$.

We also fix the behaviour of the families of almost-complex structures required to define the Floer equation.
\begin{defn}\label{def:adequate_ACS}
	A family of almost-complex structures $\cJ\colon\bR\times S^1\times\bR^{2n}\to\End\bR^{2n}$, $\cJ(s,t,z)=\cJ_{s,t}(z)$ is said to be \emph{adequate} if
	\begin{enumerate}
		\item $\cJ$ depends on the $s\in\bR$ coordinate only on a compact interval $\cS\subset\bR$.
		\item $\cJ_{s,t}$ is $\omega_0$-compatible for all $(s,t)\in\bR\times S^1$ \cite[\S 2.5]{McDuffSalamon2017_Intro}.
		\item $\|\cJ\|_{L^\infty}<\infty$.
	\end{enumerate}
	We say that a path of almost-complex structures $J\colon S^1\times\bR^{2n}\to\End\bR^{2n}$ is adequate if the corresponding $s$-constant family $\cJ_{s,t}\equiv J_t$ is adequate.
\end{defn}
Let $\cJ$ be an adequate almost complex structure. Since $\cJ_{s,t}$ is $\omega_0$-compatible for all $(s,t)$, there is a canonically defined family of Riemannian metrics on $\bR^{2n}$, given by
\begin{equation}\label{eq:def_of_g_J}
	g_\cJ(s,t,z)\left(u,v\right)=\omega_0\left(u,\cJ_{s,t}(z)v\right).
\end{equation}
Often we suppress the dependence on $(s,t)$ and simply write $g_\cJ$. The associated family of norms is denoted by $|\cdot|_{g_\cJ}$. Notice that since $\|\cJ\|_{L^\infty}<\infty$, there exists a $b>1$ such that
\begin{equation}\label{eq:g_J_unif_equiv_to_eucl}
	\frac{1}{b}|v|\leq |v|_{g_\cJ(s,t)}\leq b|v|\quad\forall (s,t)\in\bR\times S^1\quad\forall v\in\bR^{2n}.
\end{equation}

A map $u\colon\bR\times S^1\to\bR^{2n}$ is said to solve the \emph{continuation} Floer equation for $\cH,\cJ$ as above when
\begin{equation}\label{eq:fleq_contin}
		\partial_su+\cJ_{s,t}(u)\left[\partial_tu-X_{\cH}(s,t,u)\right]=0.
\end{equation}
A relevant special case of these definitions is when neither the Hamiltonian nor the almost complex structure depend on $s\in\bR$, in other words, when $H,J$ are just a Hamiltonian and a ($t$-dependent) $\omega_0$-compatible and bounded almost complex structure. In this case, a map $u\colon\bR\times S^1\to\bR^{2n}$ is said to be a \emph{Floer trajectory} for $H,J$ when
\begin{equation}\label{eq:fleq_auton}
	\partial_su-J_t(u)\left[\partial_tu-X_H(t,u)\right]=0.
\end{equation}

For a map $u\colon\bR\times S^1\to\bR^{2n}$ we define the $(\cH,\cJ)$-energy as
\begin{equation}\label{eq:H_J_energy}
	E_{\cH,\cJ}(u)=\frac12\int_{\bR\times S^1}\left|\partial_su(s,t)\right|^2_{g_\cJ}+\left|\partial_tu(s,t)-X_{\cH}(s,t,u(s,t))\right|^2_{g_\cJ}dsdt.
\end{equation}
If $u$ solves the continuation Floer equation for $\cH$, $\cJ$, then
\begin{equation}\label{eq:energy_identity_Fcyls}
	E_{\cH,\cJ}(u)=\int_{\bR\times S^1}\left|\partial_su\right|^2_{g_\cJ}dsdt=\|\partial_su\|^2_{L^2(g_\cJ)}.
\end{equation}
Therefore an uniform bound on the energy corresponds to an uniform bound on the $L^2$-norm (with respect to the euclidean metric, by equivalence of norms) of $\partial_su$. Similar considerations hold for Floer trajectories.

\begin{prop}\label{prop:unif_Linfty_bounds}
	Let $\cH\colon\bR\times S^1\times\bR^{2n}\to\bR$ be an asymptotically quadratic continuation between asymptotically quadratic Hamiltonians $H^0,H^1$ which are non-resonant at infinity, and $\cJ\colon\bR\times S^1\times \bR^{2n}\to\End\bR^{2n}$ an adequate family of almost complex structures. For every $E>0$ there exists an $M>0$ with the following significance. If $u\colon\bR\times S^1\to\bR^{2n}$ solves the continuation Floer equation \eqref{eq:fleq_contin} and its $(\cH,\cJ)$-energy of $u$ is bounded by $E$,
	\begin{equation}\label{eq:unif_Linfty_bounds_energy_bd_asspt}
		E_{\cH,\cJ}(u)=\int_{\bR\times S^1}\left|\partial_su\right|^2_{g_\cJ}<E
	\end{equation}
	then we have
	\begin{equation*}
		\left\|u\right\|_{L^\infty}<M.
	\end{equation*}
\end{prop}
\begin{proof}
	The proof follows \cite[Proposition 3.1]{AbbondandoloKang2022_SHClarke}.
	We can assume that the interval $\cS\subset\bR$ where $\cH$ depends on $s$ is the same as the one for $\cJ$.
	By equations \eqref{eq:g_J_unif_equiv_to_eucl}, \eqref{eq:energy_identity_Fcyls} and \eqref{eq:unif_Linfty_bounds_energy_bd_asspt} we conclude
	\begin{equation}\label{eq:energy_estimate_L2_control}
		\left\|\partial_su\right\|_{L^2}^2=\int_{\bR\times S^1}\left|\partial_su\right|^2<E'
	\end{equation}
	where $E'=b^2E$. Since each $H^i$ has non-degenerate quadratic Hamiltonian at infinity, there exist constants $\nu,\delta>0$ such that
	\begin{equation}\label{eq:unif_PS_on_cAcH}
		\left\|\nabla_{L^2}\cA_{\cH^s}(x)\right\|_{L^2(S^1)}\geq \frac\nu2\|x\|_{L^2}-\delta \quad\forall x\in W^{1,2}(S^1,\bR^{2n})\quad \forall s\in\bR\setminus \cS.
	\end{equation}
	Here the weak-$L^2$ gradient is with respect to the $L^2$-metric arising from the euclidean inner product. Notice that $u(s,\cdot)\in W^{1,2}(S^1)$ for all $s\in\bR$ by the regularity theory of the Floer equation. Since $u$ solves the Floer equation, and using again \eqref{eq:g_J_unif_equiv_to_eucl},
	\begin{equation}\label{eq:unif_PS_along_sol}
		\|\partial_su(s,\cdot)\|_{L^2(S^1)}\geq B\left\|\nabla_{L^2}\cA_{\cH^s}\left(u(s,\cdot)\right)\right\|_{L^2(S^1)}\geq \frac{\nu'}{2}\|u(s,\cdot)\|_{L^2(S^1)}-\delta'\quad\forall s\in\bR\setminus \cS.
	\end{equation}
	Consider an arbitrary $\alpha>0$ to be determined later. Define
	\begin{equation*}
		S_\alpha=\left\{s\in\bR:\|\partial_su(s,\cdot)\|_{L^2(S^1)}\leq \alpha\right\}.
	\end{equation*}
	Notice that \eqref{eq:unif_PS_along_sol} implies
	\begin{equation}\label{eq:L2S1_estimate_over_S}
		\|u(s,\cdot)\|_{L^2(S^1)}\leq\frac{2\left(\alpha+\delta'\right)}{\nu'}=:R_\alpha\quad \forall s\in S_\alpha\setminus \cS.
	\end{equation}
	The length of the complement of $S_\alpha$ can be estimated as
	\begin{equation*}
		\left|\bR\setminus S_\alpha\right|\leq \frac{1}{\alpha^2}\int_\bR\|\partial_su(s,\cdot)\|^2_{L^2(S^1)}ds\leq \frac{E'}{\alpha^2}=L_\alpha.
	\end{equation*}
	Therefore if $I\subset\bR$ is an interval of length $|I|=L>L_\alpha$, the previous estimate implies that $S_\alpha\cap I\neq\varnothing$ so there exists some $s_*\in S_\alpha\cap I$. Now, if we fix $\alpha<\sqrt{\frac{E'}{\left|\cS\right|}}$, then $L_\alpha>\left|\cS\right|$ so we can assume that $s_*\in \left(S_\alpha\setminus\cS\right)\cap I$. Using the identity
	\begin{equation*}
		u(s,t)=u(s_*,t)+\int_{s_*}^s\partial_su(\sigma,t)d\sigma
	\end{equation*}
	and estimate \eqref{eq:L2S1_estimate_over_S} we obtain $\forall s\in I$
	\begin{equation*}
		\begin{split}
			\|u(s,\cdot)\|_{L^2(S^1)}^2&=\int_{S^1}|u(s,t)|^2dt\leq 2\left(\int_{S^1}|u(s_*,t)|^2dt+\int_{S^1}\int_{s_*}^s|\partial_su(\sigma,t)|^2d\sigma dt\right)\leq\\
			&\leq 2\left(R_\alpha^2+LE'\right)=B_0^2.
		\end{split}
	\end{equation*}
	Integrating over $I$, we obtain that if $L_\alpha<|I|<\infty$ then
	\begin{equation}\label{eq:unif_L2_estimate_on_bded_interval}
		\|u\|_{L^2(I\times S^1)}\leq \sqrt{|I|}B_0.
	\end{equation}
	Now, let $I\subset\bR$ be an interval of length $L=L_\alpha+1$ and $I'\supset I$ an interval of length at most $2L$. Denote $\delbar_\cJ u=\partial_su+\cJ_{s,t}(u)\partial_tu$ the Cauchy-Riemann operator associated to $\cJ$. By the Calderón-Zygmund inequality \cite[\S B.2]{McDuffSalamon2012_BigJ}, for every $p\in[2,\infty)$ there exists a constant $C_p>0$ such that
	\begin{equation}\label{eq:caldzyg_2}
		\|\nabla u\|_{L^p\left(I\times S^1\right)}\leq C_p\left[\|u\|_{L^p(I'\times S^1)}+\left\|\delbar_\cJ u\right\|_{L^p(I'\times S^1)}\right].
	\end{equation}
	The constant $C_p$ depends only on $p$ and the length of $I$, i.e. $L$. Moreover $\delbar_\cJ u=\nabla_\cJ\cH\circ u$, where $\nabla_\cJ\cH$ is the gradient of $\cH$ with respect to $g_\cJ$. Since $\nabla\cH$ is sub-linear and, by \eqref{eq:g_J_unif_equiv_to_eucl}, $g_\cJ$ is uniformly equivalent to the euclidean metric, we can estimate
	\begin{equation}\label{eq:estimate_Lp_delbarJu}
		\begin{split}
			\left\|\delbar_\cJ u\right\|_{L^p(I'\times S^1)}&=\left\|\left(\nabla_\cJ\cH\right)\circ u\right\|_{L^p(I'\times S^1)}\leq\\
			&\leq \left\|\left(\nabla_\cJ\cH\right)\circ u-\bA u\right\|_{L^p(I'\times S^1)}+\left\|\bA u\right\|_{L^p(I'\times S^1)}\leq\\
			&\leq B_1\left(\|u\|_{L^p(I'\times S^1)}+1\right)
		\end{split}
	\end{equation}
	where $B_1>0$ depends only on $\cH$ and $\cJ$. Now, fixing $p=2$ and using the estimates \eqref{eq:unif_L2_estimate_on_bded_interval}, \eqref{eq:caldzyg_2}, \eqref{eq:estimate_Lp_delbarJu}, we obtain
	\begin{equation}
		\begin{split}
			\|u\|_{W^{1,2}(I\times S^1)}&\leq \|u\|_{L^2(I\times S^1)}+\|\nabla u\|_{L^2(I\times S^1)}\leq\\
			&\leq\sqrt{2L}B_0+C_p\left[\sqrt{2L}B_0+B_1\left(\sqrt{2L}B_0+1\right)\right]=M_1.
		\end{split}
	\end{equation}
	By Sobolev embedding there is some constant $R_p>0$ such that
	\begin{equation}\label{eq:unif_Lp_bound_u}
		\|u\|_{L^p\left(I\times S^1\right)}\leq R_p\|u\|_{W^{1,2}(I\times S^1)}=R_pM_1
	\end{equation}
	where $R_p$ depends only on the length of $I$. So applying Calderón-Zygmund again we have
	\begin{equation}
		\|u\|_{W^{1,p}(I\times S^1)}\leq \|u\|_{L^p(I'\times S^1)}+\|\nabla u\|_{L^p(I'\times S^1)}\leq M_2
	\end{equation}
	with $M_2$ depending only on $p$ and the length $L$ of $I$. We are now allowed to take a fixed $p>2$ and use the Sobolev embedding theorem again to find a constant $B_p>0$ such that:
	\begin{equation*}
		\|u\|_{L^\infty\left(I\times S^1\right)}\leq B_p\|u\|_{W^{1,p}\left(I\times S^1\right)}\leq B_pM_2=M.
	\end{equation*}
	Recall that the Sobolev constant $B_p$ depends only on the length of $I$. Covering $\bR$ by intervals of length $L\leq I\leq 2L$ therefore supplies us with the wanted estimate. All in all, since $L$ depends only on $E$ and $\cJ$, we have that $M$ depends only on $E$, $\cJ$ and $\cH$. The estimate is therefore independent of the particular solution $u$.
\end{proof}

\subsection{Floer chain complex and Floer homology}
\label{sub:floer_chain_complex_and_floer_homology}

The uniform $L^\infty$-estimates of proposition \ref{prop:unif_Linfty_bounds} suffice to guarantee that standard techniques like in \cite[Part II]{AudinDamian2014_MorseFloer} can be applied to construct the Floer homology groups, provided an uniform energy bound on the moduli space of Floer trajectories we are using is obtained. Here we summarize very briefly this construction, pointing the interested reader to references in the literature for additional details.

Recall that a Hamiltonian $H$ is said to be \emph{non-degenerate} when all its 1-periodic orbits are non-degenerate, in the sense that their linearized Poincaré return map does not have $1$ as an eigenvalue. In this section, $H$ always denotes a non-degenerate asymptotically quadratic Hamiltonian which is non-resonant at infinity. For the time being, rapid decay conditions on the sub-quadratic part play no role.

Let $J$ be an adequate almost-complex structure. It can be shown that the Floer trajectories of $H,J$ with finite energy connect 1-periodic orbits of $H$ \cite[Section 6.5.b]{AudinDamian2014_MorseFloer}. More specifically, if $u\colon\bR\times S^1\to\bR^{2n}$ solves \eqref{eq:fleq_auton} and has finite energy, then there exist unique 1-periodic orbits $\gamma^-,\gamma^+$ of $H$ such that $u(s,\cdot)\to\gamma^\pm$ as $s\to\pm\infty$. On the other hand, let $\gamma^-$ and $\gamma^+$ be 1-periodic orbits of $H$. Assume that a Floer trajectory $u$ connects $\gamma_-$ to $\gamma_+$. Then, since the Floer equation is formally the $L^2$-gradient descent equation of the action functional $\cA_H$, it will have energy given by
\begin{equation}\label{eq:auton_FlCyl_energy}
	E_{H,J}(u)=\cA_H\left(\gamma_-\right)-\cA_H\left(\gamma_+\right).
\end{equation}

One may show that for a generic choice of adequate almost-complex structure $J$, the moduli spaces of Floer trajectories between two fixed 1-periodic orbits of a Hamiltonian $H$ are smooth, with their dimension given by the difference of the Conley-Zehnder index of the orbits at the ends \cite[Chapter 8]{AudinDamian2014_MorseFloer} (or \cite{FloerHoferSalamon1995_Transv}). A pair $(H,J)$ with this property will be called a \emph{regular pair}.

Since we have an uniform estimate on the energy \eqref{eq:auton_FlCyl_energy} over the space of Floer trajectories connecting two given 1-periodic orbits, proposition \ref{prop:unif_Linfty_bounds} holds. This is enough input to apply the standard compactness theory of the Floer equation in the symplectically aspherical case \cite[Section 9.1.b]{AudinDamian2014_MorseFloer}. Thus we can compactify these spaces of trajectories by adding broken configurations of trajectories. Their boundary, at least for dimension 1 and 2, can be described using the gluing theory of Floer trajectories \cite[Section 9.2]{AudinDamian2014_MorseFloer}. This information can be used to carry out the construction of the Floer chain complex.

Let $(H,J)$ be a regular pair. The Floer chain complex $\left(\CF_*(H,J),d_{H,J}\right)$ is defined as follows: $\CF_k(H,J)$ is the free $\bZ/2$-vector space generated by the 1-periodic orbits $\gamma$ of $H$ of index $\CZ(\gamma)=k$. The differential $d_{H,J}:\CF_k(H,J)\to\CF_{k-1}(H,J)$ is defined extending by linearity the following expression on generators:
\begin{equation}\label{eq:def_of_Floer_differential}
	d_{H,J}\gamma=\sum_{\CZ(\xi,H)=k-1}N_{H,J}(\gamma,\xi)\xi
\end{equation}
where $N_{H,J}(\gamma,\xi)$ is found by considering the smooth 1-dimensional manifold of Floer trajectories which start at $\gamma$ and end at $\xi$, quotienting out the natural $\bR$-action on Floer trajectories given by shifting in the $s$-direction, and counting the points in the resulting compact $0$-dimensional space. The parity of this number is $N_{H,J}(\gamma,\xi)$.
Since $H$ is non-degenerate, all its 1-periodic orbits are isolated. Since it is non-resonant at infinity, by lemma \ref{lem:Linfty_estimate_perorb} they are contained in a compact set. In particular $H$ has finitely many 1-periodic orbits. Therefore $\CF_k(H,J)$ is finitely generated.

Using the description of the boundary via gluing one may show that $d^2_{H,J}=0$ \cite[Corollary 9.2.2]{AudinDamian2014_MorseFloer}. The homology of the resulting chain complex is Floer homology $\HF_*(H)$.

\subsection{Continuations}
\label{sub:continuations}

Invariance of Floer homology is studied via the definition of continuation morphisms. These are morphisms
\begin{equation}\label{eq:notation_for_contin_morph}
	\cC(\cH,\cJ)\colon\CF_*(H^0,J^0)\to\CF_*(H^1,J^1)
\end{equation}
induced by the choice of a suitable asymptotically quadratic continuation $\cH$ between $H^0$ and $H^1$ and an adequate almost-complex structure $\cJ$ between $J^0$ and $J^1$. They are defined in terms of counts of the cardinality of zero-dimensional moduli spaces of continuation Floer trajectories solving the continuation Floer equation \eqref{eq:fleq_contin} for the pair $(\cH,\cJ)$ and asymptotic to fixed 1-periodic orbits of the same index. We must guarantee compactness of these moduli spaces. Proposition \ref{prop:unif_Linfty_bounds} gives us the necessary initial input for the compactness theory of this moduli problem. For it to hold, we must show the existence of uniform energy bounds across the moduli space in analysis.

Fix 1-periodic orbits $\gamma^0,\gamma^1$ of $H^0,H^1$. Consider a continuation Floer cylinder $u\in C^\infty(\bR\times S^1,\bR^{2n})$, defined by a continuation Hamiltonian $\cH$ and a generic choice of adequate almost complex structure $\cJ$, with $u(s,\cdot)\to\gamma^0$ as $s\to-\infty$ and $u(s,\cdot)\to\gamma^1$ as $s\to +\infty$. A simple calculation in this case gives that
\begin{equation}\label{eq:energy_calc_contin}
	E_{\cH,\cJ}(u)\leq \cA_{H^0}\left(\gamma^0\right)-\cA_{H^1}\left(\gamma^1\right)-\int_{\bR\times S^1}\partial_s\cH(s,t,u(s,t))dsdt
\end{equation}
We therefore must exhibit continuation Hamiltonians $\cH$ for which it is possible to use the above estimate to reach a uniform bound on the energy. When $H^0$ and $H^1$ are general asymptotically quadratic Hamiltonians which are non-resonant at infinity, and $\mathcal{H}$ is a non-resonant homotopy between them, this is done in \cite{MyPhDThesis}. For the purposes of the present work, this degree of generality is not needed.

In the proof of the Poincaré-Birkhoff theorem for rapidly asymptotically unitary Hamiltonian diffeomorphisms, we only need continuations between rapidly asymptotically quadratic Hamiltonians, which moreover have the \emph{same quadratic Hamiltonian at infinity}. Therefore for the purposes of this paper, it suffices to assume that $H^i=Q+h^i$ where $h^i$ are smooth, \emph{bounded} functions -- we don't actually need rapid decay here. For this kind of Hamiltonians, we can define a very simple kind of continuation Hamiltonian
\begin{equation}\label{eq:same_Q_at_infty_continuation}
	\cH^s_t(z)=Q_t(z)+(1-\chi(s))h^0_t(z)+\chi(s)h^1_t(z)
\end{equation}
where $\chi\colon\bR\to[0,1]$ is a smooth non-decreasing function such that $\chi(s)=0$ for all $s\leq 0$ and $\chi(s)=1$ for all $s\geq 1$.
For this special kind of continuation Hamiltonian, we can further estimate the last term in \eqref{eq:energy_calc_contin} as
\begin{equation}
	\begin{split}
		\int_{\bR\times S^1}\partial_s\cH^s_t(u(s,t))dsdt&=\int_0^1\int_0^1\chi'(s)\left[h^1_t(u(s,t))-h^0_t(u(s,t))\right]dsdt\leq\\
		&\leq\left\|h^1-h^0\right\|_{L^\infty}=\left\|H^1-H^0\right\|_{L^\infty}<\infty.
	\end{split}
\end{equation}
This implies the following bound on the energy of a solution of the Floer equation:
\begin{equation}\label{eq:moduli_continuation_unif_energy_estimate}
	E_{\cH,\cJ}(u)\leq \cA_{H^0}\left(\gamma^0\right)-\cA_{H^1}\left(\gamma^1\right)+\left\|H^1-H^0\right\|_{L^\infty}.
\end{equation}
We have reached an uniform energy bound over all continuation Floer trajectories between $\gamma^0$ and $\gamma^1$.

\begin{rmk}
	Another case where compactness will hold is when one may arrange the Hamiltonians $H^0$, $H^1$ and the continuation Hamiltonian $\cH$ such that (in the conventions used in this paper) $\partial_s\cH>0$. This is usually called \emph{monotone continuation}, and is used for example in the definition of symplectic homology.
\end{rmk}

\subsubsection{Definition of the morphism and functoriality}
\label{ssub:definition_of_the_morphism_and_functoriality}

Given the uniform energy bound, which as remarked above can be obtained in much greater generality, we can proceed as in \cite[Chapter 11]{AudinDamian2014_MorseFloer}. Namely, transversality, compactness and gluing theorems can be shown for the continuation Floer equation, up to picking a generic $\cJ$. We can thus define a continuation morphism
\begin{equation}
	\cC(\cH,\cJ)\colon\CF_*\left(H^0,J^0\right)\to\CF_*\left(H^1,J^1\right)
\end{equation}
extending by linearity the following expression on generators $\gamma^0\in\CF_k\left(H^0,J^0\right)$:
\begin{equation}\label{eq:def_of_contin_morphism_chain}
	\cC\left(\cH,\cJ\right)\gamma^0=\sum_{\CZ\left(\gamma^1,H^1\right)=k}\mathcal N_{\cH,\cJ}\left(\gamma^0,\gamma^1\right)\gamma^1
\end{equation}
where $\mathcal N_{\cH,\cJ}\left(\gamma^0,\gamma^1\right)$ is the parity of the cardinality of the compact $0$-dimensional manifold of $(\cH,\cJ)$-Floer trajectories starting in $\gamma^0$ and ending in $\gamma^1$. That this morphism is indeed a chain morphism follows from a standard gluing argument \cite[Section 11.1, pg. 395]{AudinDamian2014_MorseFloer}.

The morphism induced by the constant homotopy $\cH^s\equiv H$ is easily seen to be the identity on $\CF_*(H,J)$ irregardless of the choice of adequate almost complex structure $\cJ$ \cite[Proposition 11.1.14]{AudinDamian2014_MorseFloer}. This fact implies that Floer homology does not depend on the almost complex structure $J$. Moreover, if we pick continuation data $(\cH,\cJ)$ and $(\cG,\cI)$ between the same Hamiltonians $H^0$ and $H^1$ which give rise to an uniform energy bound on their respective spaces of continuation Floer trajectories, then the morphisms they define are chain homotopic. Hence at the level of homology they are the same morphism $\cC\colon\HF_*(H^0)\to\HF_*(H^1)$ \cite[Proposition 11.2.8]{AudinDamian2014_MorseFloer}.

Continuation morphisms are in some sense functorial with respect to concatenation, as we shall see next. Given a further regular pair of Hamiltonian and almost complex structure $(H^2,J^2)$, consider continuation Hamiltonians $\cH^{01},\cH^{12}$ connecting $H^0$ to $H^1$ and $H^1$ to $H^2$ respectively. Assume that they give rise to an uniform energy bound across their moduli spaces of Floer trajectories, for example we could assume they are of the form \eqref{eq:same_Q_at_infty_continuation}. Denote by $\cC_{ij}\colon\CF_*\left(H^i,J^i\right)\to\CF_*\left(H^j,J^j\right)$ the continuation morphisms they give rise to. Since the ``positive end'' of $\cH^{01}$ and the ``negative end'' of $\cH^{12}$ are both equal to $H^1$, we can define a ``concatenation'' of the continuation Hamiltonians $\cH^{02}$ which is equal to $H^1$ on a long ``neck'', like in the following sketch:
\begin{center}
	\vspace{1.3em}
	\begin{overpic}[width=0.8\linewidth]{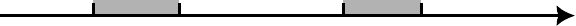}
		\put (-10, 3.5) {$\mathcal{H}^{02}=$}
		\put (5, 3.5) {$H^0$}
		\put (21,3.5) {$\cH^{01}$}
		\put (43,3.5) {$H^1$}
		\put (31.5,1) {$\underbrace{\qquad\qquad\qquad\qquad}_{s_0}$}
		\put (64,3.5) {$\cH^{12}$}
		\put (83,3.5) {$H^2$}
		\put (100,0) {$s\in\bR$}
	\end{overpic}
	\vspace{1.3em}
\end{center}
It is very simple to see that an uniform energy bound of the spaces of its Floer trajectories can be reached. In particular, it defines a continuation morphism $\cC_{02}\colon\CF_*\left(H^0,J^0\right)\to\CF_*\left(H^2,J^2\right)$ which on the level of homology does not depend on the particular continuation Hamiltonian chosen. Studying the breaking of trajectories as $s_0\to\infty$ for this special kind of concatenated continuation, one may show that $\cC_{12}\circ\cC_{01}$ is chain homotopic to $\cC_{02}$, so at the level of homology they are equal \cite[Proposition 11.2.9]{AudinDamian2014_MorseFloer}.

\subsubsection{Continuation isomorphisms and ``total'' Floer homology}
\label{ssub:continuation_isomorphisms_and_total_floer_homology}

We state now an important consequence of the above mentioned properties. Consider a continuation $\cC(\cH,\cJ)\colon\CF_*\left(H^0,J^0\right)\to\CF_*\left(H^1,J^1\right)$. We can ``reverse'' the continuation Hamiltonian, setting $\overline{\cH}(s,t,z)=\cH(1-s,t,z)$. This defines a continuation $\cC(\overline\cH,\cI)\colon\CF_*\left(H^1,J^1\right)\to\CF_*\left(H^0,J^0\right)$ for a generically chosen adequate almost complex structure $\cI$. The above mentioned results imply that these two morphisms are chain homotopy inverses, so on the level of homology they are inverses. In particular, in the class of non-degenerate asymptotically quadratic and non-resonant at infinity Hamiltonians with the \emph{same} quadratic Hamiltonian at infinity, every continuation morphism is an isomorphism. The same holds true for general non-degenerate asymptotically quadratic and non-resonant at infinity Hamiltonians which are non-resonant homotopic \cite{MyPhDThesis}. Since two such Hamiltonians are non-resonant homotopic if and only if they have the same index at infinity, their Floer homology depends only on their index at infinity. This fact may be used to recover the results of Amann, Conley and Zehnder on 1-periodic orbits in asymptotically linear Hamiltonian systems \cite{AmannZehnder1980-FinDimRed,AmannZehnder1980-PeriodicSolutionsAsyLin,ConleyZehnder1984_CZ}. Let us explain this property for the simpler case of bounded non-quadratic part.

Let $H=Q+h$ be non-degenerate, asymptotically quadratic, non-resonant at infinity and with $h$ bounded. Using a continuation which homotopes $h$ to $0$, we can compute $\HF_*(H)$ directly from the quadratic form $Q$:
\begin{equation}\label{eq:HF_calc_global}
	\HF_*\left(H\right)\cong\HF_*\left(Q\right)=
	\begin{cases}
		\bZ/2, &*=\ind_\infty(H)\\
		0, &\text{otherwise}.
	\end{cases}
\end{equation}
Indeed, as a non-degenerate Hamiltonian $Q$ only has $0\in\bR^{2n}$ as a 1-periodic orbit, and the Conley-Zehnder index of $0$ as a 1-periodic orbit of $Q$ is exactly $\ind_\infty H$. The exact same result holds for a non-degenerate, non-resonant at infinity Hamiltonian $H$ with no additional hypothesis on the sub-quadratic part.

Using this calculation, we can give a remarkably simple proof of the following lemma, which we used in section \ref{sub:a_twist_condition} to show that the definition of twist fixed points is independent of the choice of generating Hamiltonian. This argument was inspired by \cite[Section 2.3.1]{Ginzburg2010_Conley}.
\begin{lemma}\label{lem:loops_ALHDs_maslov}
	Let $K=R+k$ be an asymptotically quadratic Hamiltonian. Assume that $\phi^1_K=\id_{\bR^{2n}}$, so $\phi^t_K$ defines a loop in the space of asymptotically linear Hamiltonian diffeomorphisms. It holds that
	\begin{equation}\label{eq:loops_ALHDs_maslov_pts}
		\Mas\left(d\phi^t_K(z_0)\right)=\Mas\left(d\phi^t_K(z_1)\right)\quad\forall z_0,z_1\in\bR^{2n}.
	\end{equation}
	Denote by $\Mas K\in\bZ$ this Maslov index. It holds that
	\begin{equation}\label{eq:loops_ALHDs_maslov_infty}
		\Mas\left(\phi^t_R\right)=\Mas K.
	\end{equation}
\end{lemma}
\begin{proof}
	The first claim follows because the two loops $d\phi^t_K(z_0)$ and $d\phi^t_K(z_1)$, $t\in[0,1]$, are homotopic in $\Sp(2n)$. We show the second claim.
	Choose an arbitrary non-degenerate asymptotically linear Hamiltonian $H=Q+h$ which is non-resonant at infinity and an adequate almost complex structure $J$ which is regular for $H$. Let $H'=K\#H$, $\sigma=2\Mas K$ and $J'_t=\left(\phi^t_K\right)^*J$. It is easy to see that $J'$ is regular for $H'$ (see e.g. \cite[Lemma 4.1]{Seidel1997_pi1Ham_QH}). A loop $\gamma$ is a 1-periodic orbit of $H$ if and only if $\gamma'(t)=\phi^t_K(\gamma(t))$ is a 1-periodic orbit of $H'$. Their Conley-Zehnder indices are related by the usual loop composition formula
	\begin{equation}
		\CZ\left(\gamma'\right)=\CZ\left(\gamma\right)+\sigma.
	\end{equation}
	Moreover, a quick computation shows that a map $u$ is an $(H,J)$-Floer trajectory if and only if the map $v(s,t)=\phi^t_K(u(s,t))$ is a $(H',J')$-Floer trajectory. Composing with the flow of $K$ thus gives rise to a chain isomorphism
	\begin{equation}\label{eq:loops_ALHDs_chcplx_iso}
		\CF_*\left(H',J'\right)\cong\CF_{*-\sigma}\left(H,J\right).
	\end{equation}
	Now, notice that
	\begin{equation}
		\ind_\infty H'=\ind_\infty H+2\Mas\left(\phi^t_R\right).
	\end{equation}
	So at the level of homology, setting $\sigma'=2\Mas\left(\phi^t_R\right)$,
	\begin{equation}
		\HF_{*-\sigma}\left(H'\right)\cong\HF_*\left(H'\right)\cong\HF_*\left(R\#Q\right)=\HF_{*-\sigma'}\left(Q\right)\cong\HF_{*-\sigma'}(H).
	\end{equation}
	Using \eqref{eq:HF_calc_global} this is only possible if $\sigma'=\sigma$.
\end{proof}

\subsection{Filtered Floer homology}
\label{sub:filtered_floer_homology}

Equation \eqref{eq:HF_calc_global} implies the existence of one 1-periodic orbit for non-degenerate, non-resonant at infinity asymptotically quadratic Hamiltonians. To gain further information on the finer structure of the periodic orbits of asymptotically linear Hamiltonian systems, we must equip the Floer chain complex with the natural \emph{action filtration} arising from the action functional.

Here and below all Hamiltonians we consider are asymptotically quadratic, non-resonant at infinity and non-degenerate, and moreover all have the same quadratic Hamiltonian at infinity and bounded sub-quadratic part. We will refer to them simply as ``Hamiltonians''. General asymptotically linear Hamiltonian systems can be treated given suitable energy estimates, found in \cite{MyPhDThesis}.

Let $H$ be a Hamiltonian and $J$ a generically chosen almost complex structure. Since $\CF_k(H,J)$ is generated by 1-periodic orbits of $H$ of index $k$, for any $a\in\bR$ we can consider the subspace $\CF^{(-\infty,a]}_k(H,J)$ generated by the 1-periodic orbits $\gamma$ of action $\cA_H(\gamma)\leq a$ and index $k$. The energy of an $(H,J)$-Floer trajectory $u$ connecting two 1-periodic orbits $\gamma_0$ at $s=-\infty$ and $\gamma_1$ at $s=+\infty$ is given by the difference of the actions of these orbits, as seen in equation \eqref{eq:auton_FlCyl_energy}. This implies that the differential $d_{H,J}$ decreases the action, so $\left(\CF^{(-\infty,a]}_*(H,J),d_{H,J}\right)$ is a sub-complex. Set, for $b>a$,
\begin{equation}
	\CF^{(a,b]}_*\left(H,J\right)=\left.\CF^{(-\infty,b]}_*(H,J)\right/\CF^{(-\infty,a]}_*(H,J).
\end{equation}
As a vector space, it is (freely) generated by the orbits with action in $(a,b]$. We endow $\CF^{(a,b]}_*(H,J)$ with the quotient differential. Its homology is called \emph{filtered Floer homology}, denoted by $\HF^{(a,b]}_*(H)$. The classes in this homology carry an action value, usually referred to as the \emph{spectral number}, defined for $\alpha\in\HF^{(a,b]}(H),\alpha\neq 0$ by
\begin{equation}\label{eq:spectral_nr_class}
	\cA_H\left(\alpha\right)=\inf_{\sigma\in\CF^{(a,b]}(H,J):\left[\sigma\right]=\alpha}\max\left\{\cA_H\left(\gamma\right):\sigma_\gamma\neq0\text{ where }\sigma=\sum_{\gamma\in\operatorname{Per}^1H}\sigma_\gamma\gamma\right\}
\end{equation}
and $\cA_H(0)=-\infty$. This infimum is always attained, so it is always the action value of some 1-periodic orbit of $H$ (see e.g. \cite{Schwarz200_OnTheActionSpectrum,Usher2008_SpecNrs}).

\subsubsection{Inclusion-quotient morphism}
\label{ssub:inclusion_quotient_morphism}

First a trivial algebraic observation.

If we have $a<b<c$, the inclusion of $\CF^{(-\infty,b]}_*(H,J)$ into $\CF^{(-\infty,c]}_*(H,J)$ descends the quotient to an injective map $\CF^{(a,b]}_*(H,J)\to\CF^{(a,c]}_*(H,J)$. The cokernel of this map is $\CF^{(a,c]}_*(H,J)/\CF^{(a,b]}_*(H,J)$ which by the third isomorphism theorem is just $\CF^{(b,c]}_*(H,J)$. This leads to the short exact sequence of chain complexes
\begin{equation}
	0\to\CF^{(a,b]}_*(H,J)\to\CF^{(a,c]}_*(H,J)\to\CF^{(b,c]}_*(H,J)\to 0.
\end{equation}
The resulting long exact sequence in homology
\begin{equation}\label{eq:LES_in_filtered_HF}
	\dots\to\HF^{(a,b]}_*(H)\to\HF^{(a,c]}_*(H)\to\HF^{(b,c]}_*(H)\to\HF^{(a,b]}_{*-1}(H)\to\dots
\end{equation}
is called ``$(a,b,c)$ exact sequence'' in \cite[Section 3.2.1]{Ginzburg2007_Coisotropic}. With a slight abuse of language, we will call the first arrow $i:\HF^{(a,b]}_*(H)\to\HF^{(a,c]}_*(H)$ the \emph{inclusion} morphism, and the middle arrow $q:\HF^{(a,c]}_*(H)\to\HF^{(b,c]}_*(H)$ the \emph{quotient} morphism.

As an important particular case, consider $b>a$ and $C>0$. Consider first the long exact sequence \eqref{eq:LES_in_filtered_HF} with $a<b<b+C$, and then with $a<a+C<b+C$. We obtain
\begin{equation}\label{eq:def_of_iq_morphism-diagram}
	\begin{tikzcd}
		\cdots \ar[r]& \HF^{(a,b]}_*\left(H\right)\ar[r,"i"]&\HF^{(a,b+C]}_*\left(H\right)\ar[r,"q"]\ar[d,equal]&\HF^{(b,b+C]}_*\left(H\right) \ar[r]&\cdots\\
		\cdots \ar[r] & \HF^{(a,a+C]}_*\left(H\right)\ar[r,"i"]&\HF^{(a,b+C]}_*\left(H\right)\ar[r,"q"]&\HF^{(a+C,b+C]}_*\left(H\right) \ar[r]&\cdots
	\end{tikzcd}
\end{equation}
The composition of the upper inclusion with the lower quotient is called \emph{inclusion-quotient morphism}:
\begin{equation}\label{eq:def_of_iq_morphism}
	\Phi^{(a,b]}_{H}(C)\colon\HF^{(a,b]}_*(H)\to\HF^{(a+C,b+C]}_*(H)
\end{equation}
We usually omit the Hamiltonian $H$, the window $(a,b]$ and the shift $C$, denoting this morphism directly by $\Phi$, since this information is contained in the domain and range of the map. Also, we denote by $(a,b]+C$ the interval $(a+C,b+C]$.

\subsubsection{Degenerate Hamiltonians}
\label{ssub:degenerate_hamiltonians}

Here we explain how to drop the assumption of non-degeneracy of the Hamiltonians. This is possible because filtered Floer homology can be shown to be, in some sense, locally constant in the Hamiltonian. Recall the standing assumptions: ``Hamiltonian'' means asymptotically quadratic, non-resonant at infinity Hamiltonian with bounded sub-quadratic part. We can use much weaker hypotheses with further energy estimates \cite{MyPhDThesis}.

Given a Hamiltonian $H$, not necessarily non-degenerate, denote by $\bS(H)$ the set of critical values of its Hamiltonian action functional $\cA_H$, i.e. the set of actions of its 1-periodic orbits. Similarly as in \cite[Chapter 5, Proposition 8]{HoferZehnder2011_SymplecticInvariants} it is easy to show that $\bS(H)$ is a compact and nowhere-dense subset of $\bR$. In particular, when $H$ is non-degenerate, $\bS(H)$ is a finite subset of $\bR$.

Following \cite[\S 4.4]{BiranPolterovichSalamon2003_Propagation}, one may show that for any Hamiltonian $F$, and for all $a,b\notin\bS(F)$, there exists a compact set and a neighborhood $\mathfrak U_F$ of $F$ in the space of Hamiltonians which equal $F$ outside this compact set, with the following property: if $G\in\mathfrak U_F$ is non-degenerate, and $a,b\notin\bS(G)$, then there exists a continuation \emph{isomorphism}
\begin{equation}\label{eq:filteredHF_loc_cst}
	\HF^{(a,b]}_*(F)\cong\HF^{(a,b]}_*(G).
\end{equation}

This fact implies that we can define the filtered Floer homology of a \emph{degenerate} Hamiltonian $H$, essentially by perturbation. Indeed, non-degenerate Hamiltonians are dense in the $C^\infty$-strong topology on the space of Hamiltonians \cite{FloerHofer94_SymplecticHomology-I}. Fix $a,b\notin\bS(H)$ and take $\tilde H$ a non-degenerate Hamiltonian which is close enough to $H$ so that $a,b\notin\bS(\tilde H)$. This is possible because $\bS(H)$ is compact and nowhere dense. Up to choosing $\tilde H$ even closer to $H$, we have that $H\in\mathfrak U_{\tilde H}$. Setting
\begin{equation}\label{eq:def_of_deg_HF_by_pert}
	\HF^{(a,b]}_*(H)=\HF^{(a,b]}_*\left(\tilde H\right)
\end{equation}
and using \eqref{eq:filteredHF_loc_cst} we see that the definition is independent of the $\tilde H$ chosen.

We can talk about inclusion, quotient and inclusion-quotient morphisms on $\HF^{(a,b]}_*(H)$ with the understanding that we actually are defining them on the filtered Floer homology of a non-degenerate Hamiltonian close enough to $H$. Therefore, they are defined as long as \emph{the end-points of the action windows in the filtered homologies are not critical values of the action functionals in question}.

\subsection{Action shift of continuation morphisms}
\label{sub:action_shift_of_continuation_morphisms}

We now explain the effect of continuations on the filtered Floer homology, which is without a doubt one of the most crucial elements in the proof of the Poincaré-Birkhoff theorem. The results in this section follow the treatment of \cite[Section 3.2.2]{Ginzburg2007_Coisotropic}. We continue to assume that all Hamiltonians are asymptotically quadratic, non-resonant at infinity, have the same quadratic Hamiltonian at infinity and bounded sub-quadratic part. As always, we can use much weaker hypotheses with further energy estimates \cite{MyPhDThesis}.

First, assume that $H^0$ and $H^1$ are non-degenerate. Let $\cH$ be the continuation Hamiltonian defined in \eqref{eq:same_Q_at_infty_continuation}. Recall the definition of the continuation morphism \eqref{eq:def_of_contin_morphism_chain}, in particular, the number $\mathcal N_{\cH,\cJ}(\gamma^0,\gamma^1)$ which gives the parity of the zero-dimensional moduli space of continuation Floer trajectories starting at $\gamma^0$ and ending at $\gamma^1$. A continuation Floer trajectory may contribute to this count only if it has positive energy. From the energy estimate \eqref{eq:energy_calc_contin} and the choice of continuation Hamiltonian \eqref{eq:same_Q_at_infty_continuation}, we see
\begin{equation}\label{eq:energy_nonneg_action_shift}
	\begin{split}
		0<E_{\cH,\cJ}(u)\leq\cA_{H^0}\left(\gamma^0\right)-\cA_{H^1}\left(\gamma^1\right)+\left\|H^1-H^0\right\|_{L^\infty}\\
		\implies \cA_{H^1}\left(\gamma^1\right)\leq \cA_{H^0}\left(\gamma^0\right)+\left\|H^1-H^0\right\|_{L^\infty}.
	\end{split}
\end{equation}
Define the \emph{action shift constant}
\begin{equation}\label{eq:def_of_action_shift_constant}
	C=\left\|H^1-H^0\right\|_{L^\infty}.
\end{equation}
The continuation morphism $\cC(\cH,\cJ)\colon\CF_*(H^0,J^0)\to\CF_*(H^1,J^1)$ is thus a filtered chain complex morphism only up to a shift in the filtration:
\begin{equation}
	\cC(\cH,\cJ)\colon\CF_*^{(-\infty,a]}(H^0,J^0)\to\CF_*^{(-\infty,a+C]}(H^1,J^1),
\end{equation}
and it descends the quotient to a morphism
\begin{equation}
	\cC\left(\cH,\cJ\right)\colon\CF^{(a,b]}_*\left(H^0,J^0\right)\to\CF^{(a+C,b+C]}_*\left(H^1,J^1\right).
\end{equation}
We denote the induced morphism on the filtered homologies by
\begin{equation}\label{eq:contin_with_action_shift}
	\cC\colon\HF^{(a,b]}_*\left(H^0\right)\to\HF^{(a+C,b+C]}_*\left(H^1\right).
\end{equation}
Assuming that $a,b\notin\bS(H^0)$ and $a+C,b+C\notin\bS(H^1)$, we can allow ourselves to work with \emph{degenerate} Hamiltonians, as always with the understanding that the Floer homology is really of an arbitrarily close non-degenerate Hamiltonian.

The naturality of the long exact sequence in homology implies that the long exact sequence \eqref{eq:LES_in_filtered_HF} is functorial with respect to continuations, namely the following diagram commutes:
\begin{equation}\label{eq:LES_in_filtered_HF_functorial}
	\begin{tikzcd}[cramped]
		\cdots \ar[r]& \HF^{(a,b]}_*\left(H^0\right)\ar[r,"i"]\ar[d,"\cC"]&\HF^{(a,c]}_*\left(H^0\right)\ar[r,"q"]\ar[d,"\cC"]&\HF^{(b,c]}_*\left(H^0\right) \ar[d,"\cC"]\ar[r]&\cdots\\
		\cdots \ar[r] & \HF^{(a,b]+C}_*\left(H^1\right)\ar[r,"i"]&\HF^{(a,c]+C}_*\left(H^1\right)\ar[r,"q"]&\HF^{(b,c]+C}_*\left(H^1\right) \ar[r]&\cdots
	\end{tikzcd}
\end{equation}
This fact implies the following lemma:
\begin{lemma}\label{lem:iq_commutes_with_continuation}
	Let $H^0,H^1$ be Hamiltonians and $\mathcal{H}$ a continuation between them. Let $C>0$ be the action shift constant of $\cC=\cC(\cH,\cJ)$. If $D>0$ and $a,b\in\bR$ are such that $a,b,a+D,b+D\notin\bS(H^0)$ and $a+C,b+C,a+C+D,b+C+D\notin\bS(H^1)$ then the following diagram
	\begin{equation}\label{eq:iq_commutes_with_continuation}
		\begin{tikzcd}
			\HF^{(a,b]}_*\left(H^0\right) \ar[r,"\cC"]\ar[d,"\Phi"]& \HF^{(a,b]+C}_*\left(H^1\right)\ar[d,"\Phi"]\\
			\HF^{(a,b]+D}_*\left(H^0\right) \ar[r,"\cC"]& \HF^{(a,b]+C+D}_*\left(H^1\right)\\
		\end{tikzcd}
	\end{equation}
	commutes.
\end{lemma}
\begin{proof}
	We can prove the claim for $H^0$ and $H^1$ non-degenerate, since the action shift of a continuation between non-degenerate Hamiltonians arbitrarily close to $H^0$ and $H^1$ will be arbitrarily close to $C$. We combine the definition \eqref{eq:def_of_iq_morphism-diagram} for $H^0$ and $H^1$, and functoriality \eqref{eq:LES_in_filtered_HF_functorial} of the four corresponding long exact sequences to obtain the following commutative diagram:
	\begin{equation}\label{eq:iq_commutes_with_contin}
		\begin{tikzcd}[row sep=small,column sep=tiny, cramped]
			\HF^{(a,b]}_*\left(H^0\right)\ar[dd,"\cC"]\ar[dr,"i"] & & \HF^{(a,b+D]}_*\left(H^0\right)\ar[dd,"\cC"]\ar[dr,"q"] &\\
			& \HF^{(a,b+D]}_*\left(H^0\right)\ar[dd,"\cC"]\ar[ur,equals] & & \HF^{(a+D,b+D]}_*\left(H^0\right)\ar[dd,"\cC"]\\
			\HF^{(a,b]+C}_*\left(H^1\right)\ar[dr,"i"] & & \HF^{(a,b+D]+C}_*\left(H^1\right)\ar[dr,"q"] &\\
			& \HF^{(a,b+D]+C}_*\left(H^1\right)\ar[ur,equals] & & \HF^{(a,b]+C+D}_*\left(H^1\right)\\
		\end{tikzcd}
	\end{equation}
	The claim is the commutation of the outermost square.
\end{proof}
The next lemma is very simple but worth spelling out, because it elucidates the structure of the inclusion-quotient morphism. It can be found in \cite[Example 3.3]{Ginzburg2007_Coisotropic}. The constant homotopy $\mathcal{H}^s\equiv H$ induces a continuation morphism $\HF^{(a,b]}_*\left(H^0\right)\to\HF^{(a+C,b+C]}_*\left(H^1\right)$ for any $C\geq0$ such that $a,b,a+C,b+C\notin\bS(H)$. But as we observed in section \ref{ssub:definition_of_the_morphism_and_functoriality} (see \cite[Proposition 11.1.14]{AudinDamian2014_MorseFloer}), on the Floer chain complex of $H$ this continuation is just the identity morphism. Therefore by \eqref{eq:iq_commutes_with_continuation}
\begin{lemma}\label{lem:iq_maps_classes_to_themselves}
	Assume that $a<b\in\bR$, $C>0$ are such that $a,b,a+C,b+C\notin\bS(H)$. The inclusion-quotient morphism $\Phi=\Phi^{(a,b]}_{H}(C)\colon\HF^{(a,b]}_*(H)\to\HF^{(a,b]+C}_*(H)$ is characterized by the following property. If $\alpha\in\HF^{(a,b]}_*(H)$ is such that $\cA_H(\alpha)\in(a,b]\cap(a+C,b+C]$, then $\Phi(\alpha)=\alpha$, otherwise $\Phi(\alpha)=0$.
\end{lemma}
We used a corollary of this lemma at the crucial diagram \eqref{eq:crucial_triangle}, as a kind of functoriality ``up to a shift'' in the specific case of continuation back and forth between two Hamiltonians. Recall from section \ref{ssub:definition_of_the_morphism_and_functoriality} that if $\cH$ is a continuation Hamiltonian, we can ``reverse'' it by setting $\overline{\cH}^s=\cH^{1-s}$. If $\cH$ defines a continuation $\cC\colon\CF_*\left(H^0,J^0\right)\to\CF_*\left(H^1,J^1\right)$, then $\overline\cH$ defines a homotopy inverse $\overline\cC\colon\CF_*\left(H^1,J^1\right)\to\CF_*\left(H^0,J^0\right)$. On the level of filtered homology, assume that $\cH$ defines a continuation morphism which shifts the action filtration by $C\geq 0$. Then the same is true for $\overline\cH$. Thus the concatenation of $\cH$ and $\overline\cH$ defines a continuation morphism which shifts the action filtration by $2C$, while sending every generator of the Floer homology of $H^0$ to itself. On the filtered Floer homology of $H^0$ this is precisely the inclusion-quotient morphism with shift $2C$, as we have just seen in the previous lemma. We thus have proven:
\begin{lemma}\label{lem:contin_forth_and_back_factors_IQ}
	Let $a<b\in\bR$ be such that $a,b,a+2C,b+2C\notin\bS(H^0)$ and $a+C,b+C\notin\bS(H^1)$. Then $\overline\cC\circ\cC=\Phi_{H,(a,b]}(2C)$ on filtered homology:
	\begin{equation}\label{eq:crucial_triangle_abstract}
		\begin{tikzcd}
			& \HF^{(a,b]+C}_*\left(H^1\right)\ar[dr,bend left,"\overline\cC"]&\\
			\HF^{(a,b]}_*\left(H^0\right)\ar[rr,"\Phi"]\ar[ur,bend left,"\cC"]& &\HF^{(a,b]+2C}_*\left(H^0\right)
		\end{tikzcd}
	\end{equation}
\end{lemma}

\subsection{Local Floer homology}
\label{sub:local_floer_homology}

When we talk about the Floer homology of a \emph{degenerate} Hamiltonian $H$, we can't actually think of the 1-periodic orbits of the original Hamiltonian $H$ as generators, unless they are non-degenerate orbits. Intuitively, we can think of the non-degenerate infinitesimal bifurcations of the degenerate orbits as the generators. It is then natural to investigate what kinds of ``infinitesimal relations'' given by very low energy Floer trajectories appear between these bifurcating orbits. Local Floer homology contains this homological information when the perturbation process is localized at a single isolated 1-periodic orbit. If $H$ has 1-periodic orbits which are not isolated as critical points of the action functional, perturbing $H$ to a non-degenerate Hamiltonian will of course destroy a lot of its 1-periodic orbits. Since we are interested in statements which \emph{imply} an infinitude of periodic orbits, it is harmless to ignore the non-isolated situation.

For concreteness assume again that ``Hamiltonian'' means asymptotically quadratic, non-resonant at infinity Hamiltonian with bounded sub-quadratic part. But in this section the behaviour of the Hamiltonian at infinity is actually irrelevant.

\subsubsection{Sketch of definition of local Floer homology}
\label{ssub:sketch_of_definition_of_local_floer_homology}

Let $H$ be a Hamiltonian. Let $z_0\in\Fix\phi^1_H$ be an isolated fixed point, and $\gamma_0$ the corresponding $1$-periodic orbit of $X_H$. Let $\cU_0\subset S^1\times\bR^{2n}$ be an open neighborhood such that the graph of the orbit $\gamma_0$ is the only graph of a 1-periodic orbit of $H$ contained in the closure of $\cU_0$. We call this an \emph{isolating neighborhood} of the 1-periodic orbit $\gamma_0$.

Consider the set of Hamiltonians $G$ which are equal to $H$ outside $\cU_0$, and inside $\cU_0$ are non-degenerate. This is a residual set in the space of all Hamiltonians equal to $H$ outside $\cU_0$, as can be seen from \cite{FloerHoferSalamon1995_Transv}. Since $\cU_0$ is an isolating neighborhood of $\gamma_0$, if $G$ is $C^\infty$-close enough to $H$ then every 1-periodic orbit of $G$ which intersects $\cU_0$ is completely contained in it \cite[Lemma 2.1, point 1]{CFHW_SHApplsII}. We call these orbits of $G$ in $\cU_0$ the \emph{orbits bifurcating from $\gamma_0$}.

For $J$ generically chosen, one may define a Floer chain complex $\CF^\loc_*\left(G,\cU_0,J\right)$ which is generated by the 1-periodic orbits of $G$ bifurcating from $\gamma_0$, and whose differential counts the Floer cylinders which connect these orbits. If $G$ is a suitably small perturbation of $H$, one shows that such Floer cylinders cannot leave the neighborhood $\cU_0$ \cite[Lemma 2.1, point 2]{CFHW_SHApplsII}. As a consequence, the complex is well defined.

Let $\cU_0$ and $\cU_1$ be isolating neighborhoods for $\gamma_0$ and $G_0,G_1$ two perturbations of $H$ such that $G_i-H$ is supported in $\cU_i$ for $i=0,1$. We can define continuation Hamiltonians between them, which are constantly equal to $H$ outside $\cU_0\cup\cU_1$, and thus local continuation morphisms between the corresponding local chain complexes of $G_0$ and $G_1$. If $G_0$ and $G_1$ are $C^\infty$-close enough, it is easy to show that the induced morphism on homology is an isomorphism. Hence, the homology of $\CF^\loc_*\left(G,\cU_0,J\right)$ is independent of $G$ and $\cU_0$. We call it the \emph{local} Floer homology of $\gamma_0$ and we denote it by $\HFloc_*\left(H, z_0\right)$. Since we carry the generating Hamiltonian in the notation, it is sufficient to refer to the fixed point only.

\subsubsection{Properties of local Floer homology}
\label{ssub:properties_of_local_floer_homology}

First, we study how the local Floer homology depends on the Hamiltonian.
We state an important invariance property, which can be found in \cite[Section 3.2,(LF1)]{Ginzburg2010_Conley}.
\begin{lemma}\label{lem:HFloc_invariant_under_unif_isolating_homotopies}
 Assume there exists a homotopy $F=F^s_t$, $s\in[0,1]$, with $F^0=H^0$, $F^1=H^1$, $z_0\in\Fix\phi^1_{F^s}$ for all $s\in[0,1]$ and there exists a fixed neighborhood $\mathcal{U}_0$ which is isolating for $\gamma_s(t)=\phi^t_{F^s}(z_0)$ for every $s\in[0,1]$. Then
	\begin{equation}\label{eq:HFloc_invariant_under_unif_isolating_homotopies}
		\HFloc_*\left(H^0,z_0\right)\cong\HFloc_*\left(H^1,z_0\right).
	\end{equation}
\end{lemma}
\begin{proof}[Sketch of proof]
	As explained right below the statement of \cite[Section 3.2,(LF1)]{Ginzburg2010_Conley}, this follows from a continuation argument.

	We can choose an arbitrarily small perturbation $\mathcal{G}=\mathcal{G}^s_t$ of $F$ and a family $\mathcal{J}=\mathcal{J}^s_t$ of almost-complex structures with the following list of properties. $\mathcal{J}$ is constant outside $\mathcal{U}_0$. $\mathcal{G}$ is equal to $F$ outside $\mathcal{U}_0$ for all $s\in[0,1]$. The end-points of $\mathcal{G}$ are non-degenerate Hamiltonians $G^0$ and $G^1$, which can be chosen as close as we wish to respectively $H^0$ and $H^1$, and are equal to them outside $\mathcal{U}_0$. The local Floer chain complexes $\CF^\loc_*\left(G^0,\mathcal{U}_0,\mathcal{J}^0\right)$ and $\CF^\loc_*\left(G^1,\mathcal{U}_0,\mathcal{J}^1\right)$ are well defined. Finally, $\left(\mathcal{G},\mathcal{J}\right)$ define continuation Floer trajectories connecting the generators of the aforementioned local Floer chain complexes which are transversally cut out.

	Since $\mathcal{U}_0$ isolates $\gamma_s$ for all $s\in[0,1]$, if $\mathcal{G}$ is a small enough perturbation, the continuation Floer trajectories that it defines can be shown to be contained in $\mathcal{U}_0$. Moreover, the moduli spaces of continuation Floer trajectories between generators of the local Floer chain complexes of $G_0$ and $G_1$ can be shown to be compact, up to breaking on Floer trajectories inside $\mathcal{U}_0$. Therefore, the continuation map defined by $\left(\mathcal{G},\mathcal{J}\right)$ between the local Floer chain complexes of $G_0$ and $G_1$ is well defined. The standard arguments on continuations then show that the induced morphism on homology is an isomorphism. Independence of local Floer homology on the data required to define it gives us the conclusion.
\end{proof}
Next, following \cite[Section 3.2, (LF3) and (LF4)]{Ginzburg2010_Conley}, we study the action of loops in $\Ham$ on local Floer homology. We are interested only in contractible loops and loops of linear symplectomorphisms.
\begin{lemma}\label{lem:HFloc_action_of_linear_loops_shifts_grading}
	Let $P$ be a quadratic Hamiltonian generating a loop of linear symplectomorphisms with Maslov index $\mu\in\bZ$. Set $\sigma=2\mu$.
	\begin{equation}\label{eq:HFloc_action_of_linear_loops_shifts_grading}
		\HFloc_*\left(P\#H,z_0\right)\cong\HFloc_{*-\sigma}\left(H,z_0\right).
	\end{equation}
\end{lemma}
\begin{proof}
	Choose $(G,\mathcal{U}_0,J)$ giving rise to the local Floer chain complex $\CF^\loc_*\left(G,\mathcal{U}_0,J\right)$. Set $G'=P\#G$ and $J'_t=\left(\phi^t_P\right)^*J_t$. This is also a regular pair giving rise to the local Floer chain complex $\CF^\loc_{*}\left(G',\mathcal{U}_0,J'\right)$. If $\gamma'$ is a 1-periodic orbit of $G'$ in $\mathcal{U}_0$, then there is a unique 1-periodic orbit $\gamma$ of $G$ in $\mathcal{U}_0$ such that $\gamma'(t)=\phi^t_P(\gamma(t))$. Moreover $\CZ\left(\gamma'\right)=\CZ\left(\gamma\right)+\sigma$. The map defined by composition with $\phi^t_P$ thus induces an isomorphism up to a between the graded $\bZ/2$-vector spaces underlying $\CF^\loc_*\left(G,\mathcal{U}_0,J\right)$ and $\CF^\loc_{*-\sigma}\left(G',\mathcal{U}_0,J'\right)$. What remains to show is that this is a chain map. Consider a $(G,J)$-Floer trajectory $u$. Set $v(s,t)=\phi^t_P\left(u(s,t)\right)$. A simple calculation shows that $v$ solves the $(G',J')$-Floer equation, and in fact every Floer trajectory for $(G',J')$ is of this form. Therefore the composition with $\phi^t_P$ is a chain map inducing a chain isomorphism $\CF^\loc_*\left(G,\mathcal{U}_0,J\right)\cong\CF^\loc_{*-\sigma}\left(G',\mathcal{U}_0,J'\right)$.
\end{proof}
Notice that assuming that $P$ was quadratic was not essential for the proof, which in fact holds also for a general Hamiltonian $P$ generating a loop in $\Ham$ and $\mu$ the Maslov index of the loop in $\Sp(2n)$ obtained by linearizing its flow at any given point. The following lemma can be established in an analogous manner.
\begin{lemma}\label{lem:HFloc_invariant_contractible_loop_compos}
	Assume $H^0$ and $H^1$ are such that $\phi^1_{H^0}=\phi^1_{H^1}=\upphi$ and their flows are related by a contractible loop in $\Ham$. If $z_0\in\Fix\upphi$ is an isolated fixed point, then
	\begin{equation}\label{eq:HFloc_invariant_contractible_loop_compos}
		\HFloc_*\left(H^0,z_0\right)\cong\HFloc_*\left(H^1,z_0\right).
	\end{equation}
\end{lemma}

Next, we want to single out in which degrees the local Floer homology can be non-trivial. Let $z_0\in\Fix\phi^1_H$ be an isolated fixed point. Define the \emph{degree support} of the local homology
	\begin{equation}\label{eq:def_of_degsupp}
		\deg\supp\HFloc_*(H,z_0)=\left\{k\in\bZ:\HFloc_k(H,z_0)\neq\{0\}\right\}
	\end{equation}
There is an easy bound on the degree support.
\begin{lemma}\label{lem:degsupp_of_HFloc}
	It holds that
	\begin{equation}\label{eq:degsupp_estimate}
		\deg\supp\HFloc_*(H,z_0)\subseteq\left[\meanCZ\left(z_0,H\right)-n,\meanCZ\left(z_0,H\right)+n\right].
	\end{equation}
\end{lemma}
This lemma is an immediate consequence of \cite[Section 1.3.7, equation 1.11]{Abbondandolo2001_MorseHamilt}. Since we are taking the lower-semicontinuous extension of the Conley-Zehnder index when dealing with the possibly degenerate orbit $\gamma_0$, the Conley-Zehnder index of an orbit bifurcating from $\gamma_0$ can be at distance at most $2n$ from $\meanCZ(z_0,H)$.

Now we turn to the relation between local Floer homology and filtered Floer homology of a degenerate Hamiltonian. We will find that local Floer homology classes provide the ``building blocks'' of filtered Floer homology in certain simple situations. Recall that the set of critical values of the Hamiltonian action functional $\cA_H$ is denoted by $\bS(H)$.
\begin{prop}\label{prop:local_to_nbh_HF-general}
	Assume that $z_0$ is an isolated fixed point of $\phi^1_H$ such that $\cA_H(z_0)=a_0$ is an isolated critical value. If $a<b$ are such that $[a,b]\cap\bS(H)=\left\{a_0\right\}$, then there exists an \emph{injective} morphism
	\begin{equation}\label{eq:local_to_global_HF-general}
		\HFloc_*\left(H,z_0\right)\hookrightarrow\HF^{(a,b]}_*\left(H\right).
	\end{equation}
\end{prop}
\begin{proof}
	Recall that when $a,b\notin\bS(H)$, the Floer homology of $H$ in action window $(a,b]$ is by definition the Floer homology in the same action window of a non-degenerate $\tilde H$ which is sufficiently $C^\infty$-close to $H$.
	
	Fix once and for all a sufficiently small isolating neighborhood $\cU_0$ of the 1-periodic orbit $\gamma_0(t)=\phi^t_H(z_0)$.
	Let $G$ be a Hamiltonian which equals $H$ outside $\cU_0$ and is non-degenerate in $\cU_0$, so close to $H$ such that the local Floer chain complex $\CF^\loc_*\left(G,\cU_0,J\right)$ is well defined. Since $G$ is non-degenerate in $\cU_0$, there always exists a non-degenerate Hamiltonian $\tilde H$ which coincides with $G$ in $\cU_0$. Up to taking $G$ even closer to $H$, we can choose $\tilde H$ close enough to $H$ so that its Floer homology defines the Floer homology of $H$. We can pick the almost-complex structure $J$ generically such that the Floer chain complex of $\tilde H$ and the local Floer chain complex defined by $G$ are both well defined.

	Let $I=(a,b]$ and $I'=(a',b']$ be intervals whose end-points are not critical values of $\cA_{\tilde H}$. Assume that
	\begin{equation}
		I\cap\bS(\tilde H)=I'\cap\bS(\tilde H).
	\end{equation}
	Then there exists an isomorphism $\HF_*^I(\tilde H)\cong\HF_*^{I'}(\tilde H)$ given by appropriate inclusion-quotient morphisms.
	Hence it suffices to show that for every $\epsilon>0$ small enough, and for every $G$ and $\tilde H$ chosen as above, the obvious inclusion of chain complexes
	\begin{equation}
		\CF^\loc_*\left(G,\cU_0,J\right)\subset\CF_*^{\left(a_0-\epsilon,a_0+\epsilon\right]}\left(\tilde H, J\right)
	\end{equation}
	remains an injection at the level of homologies.

	We argue by contradiction. Assume there exist a sequence of Hamiltonians $\left(\tilde H^{(k)}\right)_{k\in\bN}$ which are non-degenerate, asymptotically quadratic and non-resonant at infinity, and a sequence $\left(G^{(k)}\right)_{k\in\bN}$ of Hamiltonians equal to $H$ outside $\cU_0$ and non-degenerate in $\cU_0$. Assume that $G^{(k)}|_{\cU_0}=\tilde H^{(k)}|_{\cU_0}$, and that $\tilde H^{(k)}\to H$ and $G^{(k)}\to H$ in $C^\infty$ as $k\to\infty$. Let $\left(J^{(k)}\right)_{k\in\bN}$ be a sequence of adequate almost complex structures, with $J^{(k)}\to J$ in $C^\infty$ and chosen generically in order to define the Floer chain complexes in study. Suppose there exists a sequence $\epsilon_k\to 0$, and sequences of chains
	\begin{equation}
		\left(c_k\in\CF^\loc_*\left(G^{(k)},\cU_0,J^{(k)}\right)\right)_{k\in\bN},\quad \left(\tilde c_k\in\CF_{*+1}^{(a-\epsilon_k,a+\epsilon_k]}\left(\tilde H^{(k)},J^{(k)}\right)\right)_{k\in\bN}
	\end{equation}
	which represent non-zero elements in the corresponding Floer homologies, but for which
	\begin{equation}\label{eq:HFloc_inj_into_small_window-homicide}
		c_k=d_{\tilde H^{(k)},J^{(k)}}\tilde c_k\in\operatorname{Im} d_{\tilde H^{(k)},J^{(k)}}
	\end{equation}
	i.e. each $c_k$ becomes zero in the filtered homology $\HF^{(a-\epsilon_k,a+\epsilon_k]}\left(\tilde H^{(k)},J^{(k)}\right)$.
	Recall the definition of the Floer differential \eqref{eq:def_of_Floer_differential}. By \eqref{eq:HFloc_inj_into_small_window-homicide}, there are sequences of 1-periodic orbits $\left(\xi_k\right)_{k\in\bN}$ and $\left(\chi_k\right)_{k\in\bN}$ of $\tilde H^{(k)}$, with $\xi_k$ appearing as a summand in $c_k$ and $\chi_k$ appearing as a summand in $\tilde c_k$, such that
	\begin{equation}\label{eq:HFloc_inj_into_small_window-action_difference_estimate}
		a-\epsilon_k<\cA_{H^{(k)}}(\chi_k)\leq a+\epsilon_k,\quad a-\epsilon_k<\cA_{H^{(k)}}(\xi_k)\leq a+\epsilon_k\quad\forall k
	\end{equation}
	and
	\begin{equation}\label{eq:HFloc_inj_into_small_window-cylcount_nonzero}
		N_{\tilde H^{(k)},J^{(k)}}\left(\xi_k,\chi_k\right)\neq0\quad\forall k.
	\end{equation}
	For the sake of the argument, we can assume that each $\chi_k$ in the sequence $\left(\chi_k\right)$ does not lie completely in $\cU_0$. Indeed, if for $k$ large enough every orbit $\chi_k$ which appears as a summand of $\tilde c_k$ lies completely in $\cU_0$, then we can see it as an orbit of $G^{(k)}$, hence $\tilde c_k$ is a chain in $\CF^\loc_{*+1}\left(G^{(k)},\cU_0,J^{(k)}\right)$. This implies that for $k$ large enough, $c_k=d_{G^{(k)},J^{(k)}}\tilde c_k$, so $c_k$ is zero in homology. But this is a contradiction, because we assumed that $c_k$ represented a non-zero class in the local homology. This shows there are infinitely many $k$ such that $\chi_k$ does not lie completely in $\cU_0$.

	The isolation of $\gamma_0$ and the convergence of $\tilde H^{(k)}$ to $H$ imply that for $k$ large enough, $\chi_k$ is completely disjoint from $\cU_0$. We thus can assume that every orbit in the sequence $\chi_k$ lies entirely outside $\cU_0$.

	Since \eqref{eq:HFloc_inj_into_small_window-cylcount_nonzero} holds, there is a sequence of $(\tilde H^{(k)},J^{(k)})$-Floer trajectories $v_k$, such that $v_k(s,\cdot)\to\chi_k$ as $s\to-\infty$, $v_k(s,\cdot)\to\xi_k$ as $s\to+\infty$ for all $k$. Since $\chi_k$ is non-degenerate and lies outside $\cU_0$, there exists a sequence $\left(s_k\in\bR\right)_{k\in\bN}$ such that for all $k$, $v_k(s,\cdot)$ lies outside $\cU_0$ for all $s>s_k$. Define $u_k(s,\cdot)=v_k(s+s_k,\cdot)$. By construction, $u_k\left(0,\cdot\right)$ lies in $\overline\cU_0$. From \eqref{eq:HFloc_inj_into_small_window-action_difference_estimate} and the energy calculation \eqref{eq:auton_FlCyl_energy} we see that
	\begin{equation}\label{eq:HFloc_inj_into_small_window-energy_vanishes}
		E_{H^{(k)},J^{(k)}}\left(u_k\right)\to 0
	\end{equation}
	This implies that we can use proposition \ref{prop:unif_Linfty_bounds} and a routine bubbling argument (see e.g. \cite[Proposition 6.6.2]{AudinDamian2014_MorseFloer}) to show that our sequence $\left(u_k\right)_k$ has a $C^0_\loc$ converging subsequence. Elliptic bootstrapping \cite[Section B.4]{McDuffSalamon2012_BigJ}, appropriately modified for the Floer equation, gives us a further subsequence which converges in $C^\infty_\loc$. In particular, there exists an $(H,J)$-Floer trajectory $u$ and a subsequence of the $u_k$ converging to $u$ in $C^\infty_\loc$. Since $u_k(0,\cdot)\subset\overline\cU_0$ for all $k$, it holds that $u(0,\cdot)\subset\overline\cU_0$. From \eqref{eq:HFloc_inj_into_small_window-energy_vanishes} it follows that $E_{H,J}(u)=0$. Hence $u$ is constant in the $s$-direction, and therefore is a 1-periodic orbit $\gamma(t)=u(0,t)$ of $H$ contained in $\overline\cU_0$. Since $u_k(s,\cdot)\to\chi_k$ as $s\to-\infty$, it must be that $\gamma\neq\gamma_0$. But we assumed that $\cU_0$ was an isolating neighborhood for $\gamma_0$, so no 1-periodic orbit of $H$ other than $\gamma_0$ can be found in its closure. This contradiction proves the proposition.
\end{proof}
The proof shows that we can think of the classes in $\HFloc_*\left(H,z_0\right)$ as classes in $\HF^{(a,b]}_*(H)$ when $[a,b]\cap\bS(H)=\left\{\cA_H\left(z_0\right)\right\}$ by choosing perturbations which coincide on a small isolating neighborhood of $z_0$. In particular, using lemma \ref{lem:iq_maps_classes_to_themselves} on the inclusion-quotient morphism, we conclude that the classes corresponding to the local Floer homology of $z_0$ are mapped to themselves by the inclusion-quotient morphism:
\begin{lemma}\label{lem:iq_commutes_with_HFloc_inclusion}
	Let $z_0\in\Fix\phi^1_H$ be an isolated fixed point such that its critical value $\cA_H(z_0)=a_0\in\bR$ is isolated. If $a<b$ and $C>0$ are such that $[a,b]\cap\bS(H)=[a+C,b+C]\cap\bS(H)=\{a_0\}$, then the following diagram commutes:
	\begin{equation}\label{eq:iq_commutes_with_HFloc_inclusion}
		\begin{tikzcd}
			\HFloc_*\left(H,z_0\right) \ar[r,hook]\ar[d,equals] & \HF^{I}_*\left(H\right)\ar[d,"\Phi(C)"]\\
			\HFloc_*\left(H,z_0\right)\ar[r,hook] &\HF^{I+C}_*\left(H\right)
		\end{tikzcd}
	\end{equation}
	where the horizontal arrows are the injections of proposition \ref{prop:local_to_nbh_HF-general} and the vertical right arrow is the inclusion-quotient morphism \eqref{eq:def_of_iq_morphism}.
\end{lemma}
In the proof of the main theorem we use the following corollary of proposition \ref{prop:local_to_nbh_HF-general}, which can be also found in \cite[Section 3.2, (LF2)]{Ginzburg2010_Conley}.
\begin{lemma}\label{lem:local_to_nbh_HF}
	Assume that $\Fix\phi^1_H$ is a finite set. If $a_0\in\bS(H)$ and $a<b$ are such that $[a,b]\cap\bS(H)=\{a_0\}$, then
	\begin{equation}\label{eq:local_to_global_HF-nbh}
		\HF^{(a,b]}_*\left(H\right)\cong\bigoplus\left\{\HF^\loc\left(H,z'\right):z'\in\Fix\phi^1_H,\ \cA_H\left(z'\right)=a_0\right\}.
	\end{equation}
\end{lemma}

Finally, it is important to know how local Floer homology behaves under iteration. We state a corollary of \cite[Theorem 1.1]{GinzburgGuerel2010_LFHAG} by Ginzburg and Gürel, which suffices for the purposes of this paper:
\begin{lemma}\label{lem:HFloc_iterated}
	Assume that $\HFloc_*(H,z_0)\neq\{0\}$. Then $\HFloc_*\left(H^{\times k},z_0\right)\neq\{0\}$ as long as $k$ is not a multiple of the order of any non-trivial root of unity in the spectrum of $d\phi^1_H(z)$.
\end{lemma}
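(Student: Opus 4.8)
The plan is to reduce the lemma to the invariance of local Floer homology under \emph{admissible} iterations, a phenomenon due to Ginzburg and Gürel, and to outline how that invariance is established in our setting. Call $k$ admissible for $z_0$ if $\lambda^{k}\neq 1$ for every eigenvalue $\lambda\neq 1$ of $d\phi^1_H(z_0)$ that is a root of unity; this is exactly the hypothesis of the statement, and it is equivalent to asking that the generalized eigenspaces for the eigenvalue $1$ of $d\phi^{k}(z_0)=d\phi^1_{H^{\times k}}(z_0)$ and of $d\phi^1_H(z_0)$ coincide. A by-product of the argument will be that $z_0$ is still an isolated fixed point of $\phi^{k}$, so that $\HFloc_*(H^{\times k},z_0)$ is defined at all.

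First I would replace the Floer-homological problem by a finite-dimensional Morse-theoretic one. As recalled in the introduction, $\HFloc_*(H,z_0)$ agrees, up to a grading shift, with the local Morse homology $\operatorname{HM}^{\mathrm{loc}}_*(W,0)$ at the critical point $0$ of a generating function for $\phi^1_H$ near $z_0$; concretely such a germ $W\colon(\bR^{N},0)\to\bR$ with an isolated critical point at $0$ is produced by the Amann--Conley--Zehnder saddle-point reduction, after first modifying $H$ away from $\gamma_0$ so as to make its Hessian bounded, which by Lemma~\ref{lem:local_HF_non-bifurcation_invariance} changes nothing locally. In the same way $\HFloc_*(H^{\times k},z_0)$ is computed, up to a shift, by $\operatorname{HM}^{\mathrm{loc}}_*(W_k,0)$ for a generating-function germ $W_k$ of $\phi^{k}$. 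Now $\ker d^2W(0)$ is canonically the space $\ker(d\phi^1_H(z_0)-\bid)$ and $\ker d^2W_k(0)$ is $\ker(d\phi^{k}(z_0)-\bid)$, and admissibility forces these to be the same subspace $K$. By the Gromoll--Meyer shifting theorem, both local Morse homologies are then computed, up to further grading shifts by the indices of the non-degenerate parts, by the critical groups of the restrictions $\widehat W,\widehat W_k$ of $W,W_k$ to a common slice modelled on $K$.

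It remains to compare $\widehat W$ and $\widehat W_k$, and here I would invoke the classical iteration theory of the reduction: since the linearized dynamics on $K$ is unipotent and $k$ is admissible, $W_k$ can be arranged so that $\widehat W_k$ is a non-degenerate stabilization of a positive rescaling of $\widehat W$ — heuristically, $\widehat W_k$ is $k\,\widehat W$ up to right-equivalence — the supplementary quadratic block being non-degenerate precisely because no root-of-unity eigenvalue of $d\phi^1_H(z_0)$ other than $1$ is carried to $1$ by the $k$-th power. A positive rescaling leaves a critical group unchanged, and a non-degenerate stabilization only shifts its grading; hence the critical groups of $\widehat W_k$ and $\widehat W$ are isomorphic up to a shift, so in particular $0$ is an isolated critical point of $W_k$ and $\operatorname{HM}^{\mathrm{loc}}_*(W_k,0)\neq\{0\}$ as soon as $\operatorname{HM}^{\mathrm{loc}}_*(W,0)\neq\{0\}$. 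Tracing back through the reductions gives $\HFloc_*(H^{\times k},z_0)\cong\HFloc_{*+\sigma}(H,z_0)$ for a suitable $\sigma\in\bZ$, and in particular $\HFloc_*(H^{\times k},z_0)\neq\{0\}$, as wanted.

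The crux, which I expect to be the main obstacle, is exactly the comparison of $\widehat W$ and $\widehat W_k$: one must prove that the sole effect of an admissible iteration on the reduced germ is a positive rescaling together with a non-degenerate stabilization, so that all of the local homological content is preserved up to a controlled shift in degree. This is where admissibility is indispensable — a resonant iteration can genuinely destroy the local Floer homology, as is already visible for the iterates of a linear symplectic map every iterate of which is non-degenerate — and it is the content of the iteration results of Ginzburg and Gürel (see \cite{Gurel_PJM2014} and the references therein), on which the proof would rely.
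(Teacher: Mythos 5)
Your proposal is correct and matches the paper's treatment of this lemma: the paper offers no proof of its own but defers to Ginzburg--Gürel \cite{GinzburgGuerel2010_LFHAG}, whose argument runs exactly along the lines you sketch (reduction to local Morse homology of a generating-function germ, Gromoll--Meyer shifting on the common degenerate slice, and the comparison of the reduced germs under an admissible iteration), and you likewise defer that final crux to the same source. The paper even records the stronger conclusion you extract, namely $\HFloc_*(H^{\times k},z_0)\cong\HFloc_{*+\sigma}(H,z_0)$ for a degree shift $\sigma$ whose growth rate under iteration is the mean Conley--Zehnder index of the orbit.
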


\section*{Declarations}
\subsection*{Conflict of interest} The author declares that there is no conflict of interest.
\subsection*{Acknowledgments} The fundamental ideas and techniques underlying the results in this paper have been developed during my doctorate at the RWTH Aachen. I would like to thank my doctoral thesis advisors, Umberto Hryniewicz and Alberto Abbondandolo, for their the generous and patient supervision. I would like to thank the anonymous referee for the careful reading and helpful critique. I would like to thank Louis Merlin for making me aware of the theorem of Vinogradov on the equidistribution of prime multiples of irrational numbers mod 1. I would like to thank Urs Fuchs for the frequent and very useful conversations. I would like to thank Jungsoo Kang, Dustin Connery-Grigg and Li Meng for their important comments on previous versions of this work. I would like to thank Gabriele Benedetti for his essential contributions to the work environment of the Lehrstuhl MathGA.

\printbibliography

\vfill
\end{document}

\typeout{get arXiv to do 4 passes: Label(s) may have changed. Rerun}